\newtheorem{theorem}{Theorem}[section]
\newtheorem{lemma}{Lemma}[section]
\newtheorem{proposition}{Proposition}[section]
\newtheorem{definition}{Definition}[section]
\newtheorem{remark}{Remark}[section]
\newtheorem{assumption}{Assumption}[section]
\theoremstyle{definition} \theoremstyle{remark}
\numberwithin{equation}{section}
\begin{document}

	\markboth{Y.Z. Yang and Y. Zhou}{stochastic partial differential equations with non-local operator}
	
	\date{}
	
	\baselineskip 0.22in
	
	\title{The time fractional stochastic partial differential equations with non-local operator on $\mathbb{R}^{d}$}
	
	\author{ Yong Zhen Yang$^1$, Yong Zhou$^{1,2}$\\[1.8mm]
		\footnotesize {Correspondence: yozhou@must.edu.mo}\\
		\footnotesize  {$^{1}$ Faculty of Mathematics and Computational Science, Xiangtan University}\\
		\footnotesize  {Hunan 411105, P.R. China}\\[1.5mm]
		\footnotesize {$^2$ Macao Centre for Mathematical Sciences, Macau University of Science and Technology}\\
		\footnotesize {Macau 999078, P.R. China}\\[1.5mm]
	}
	
	\maketitle

	\begin{abstract}
	This paper establishes a comprehensive well-posedness and regularity theory for time-fractional stochastic partial differential equations on $\mathbb{R}^d$ driven by mixed Wiener--L\'evy noises. The equations feature a Caputo time derivative $\partial_t^\alpha$ ($0<\alpha<1$) and a spatial nonlocal operator $\phi(\Delta)$ generated by a subordinate Brownian motion, leading to a doubly nonlocal structure.
	For the case $p \ge 2$, we prove the existence, uniqueness, and sharp Sobolev regularity of weak solutions in the scale of $\phi$-Sobolev spaces $\mathcal{H}_p^{\phi,\gamma+2}(T)$. Our approach combines harmonic analysis techniques (Fefferman--Stein theorem, Littlewood--Paley theory) with stochastic analysis to handle the combined Wiener and L\'evy noise terms. In the special case of cylindrical Wiener noise, a dimensional constraint $d < 2\kappa_0\bigl(2 - (2\sigma_2 - 2/p)_+/\alpha\bigr)$ is obtained.~For the low-regularity case $1 \le p \le 2$, where maximal function estimates fail, we construct unique local mild solutions in $L_p(\mathbb{R}^d)$ for equations driven by pure-jump L\'evy space-time white noise, using stochastic truncation and fixed-point arguments.
	The results unify and extend previous theories by simultaneously incorporating time-space nonlocality and jump-type randomness.
		
		\noindent\textbf{Mathematics Subject Classifications (2020):} 35R11, 26A33
		
		\noindent\textbf{Keywords:} Stochastic partial differential equation; Time-fractional derivative; Non-local operator; L\'{e}vy noise; Mild solution; Sobolev regularity; Bernstein function
	\end{abstract}
	
	\section{Introduction}
	Fractional calculus has established itself as a fundamental mathematical framework for characterizing complex systems throughout various scientific disciplines. Distinct from conventional calculus, fractional operators intrinsically account for nonlocal interactions and memory effects, rendering them exceptionally appropriate for modeling hereditary characteristics in physical systems, anomalous transport mechanisms, and viscoelastic material behavior. For comprehensive mathematical foundations of these applications, consult \cite{Gorenflo,Y zhou2}.
	
This paper investigates the following stochastic partial differential equation with non-local operators (NLSPDE) on $\mathbb{R}^{d}$:
\begin{align}\label{TSFSPDE}
	\partial_t^\alpha w &= \phi(\Delta)w + g(w) 
	+ \sum_{k=1}^{\infty} \partial^{\sigma_{1}}_{t} \int_{0}^{t} h^{k}(w) \, dB^{k}_{s} \nonumber \\
	&\quad + \sum_{k=1}^{\infty} \partial^{\sigma_{2}}_{t} \int_{0}^{t} f^{k}(w) \, dZ^{k}_{s},~t>0, x \in \mathbb{R}^{d};~w(0,\cdot)= w_{0},~x \in \mathbb{R}^{d}.
\end{align}
as well as the NLSPDE driven by L\'{e}vy time-space white noise on $\mathbb{R}^{d}$:
\begin{align}\label{TSFSPDE1}
	\partial_t^\alpha w = \phi(\Delta)w + g(w) + \partial^{\sigma_{2}-1}_{t} \eta(w) \dot{\mathcal{Z}},\quad t>0,x\in\mathbb{R}^{d};
	w(0) = w_{0},~x\in\mathbb{R}^{d}.
\end{align}
Here, $\alpha \in (0,1)$, $\sigma_{1} < \alpha + 1/2$, $\sigma_{2} < \alpha + 1/p$, $\{B^{k}_{t}\}$ is a sequence of independent real-valued Wiener processes, and $\{Z^{k}_{t}\}$ is a sequence of independent $d_{1}$-dimensional real-valued L\'{e}vy processes.
	The function $\phi$ is a Bernstein function with $\phi(0^{+})=0$, mapping $(0,\infty)$ to $(0,\infty)$, that satisfies
	\[
	(-1)^{k}\phi^{(k+1)}(x) \geq 0, \quad x > 0, \quad k = 0, 1, 2, \ldots
	\]
	The operator $\phi(\Delta):=-\phi(-\Delta)$ represents the generator of rotationally invariant subordinate Brownian motion with characteristic exponent $\phi(|\xi|^{2})$, defined as
	\[
	\phi(\Delta)w(x) = \mathcal{F}^{-1}\left(-\phi(|\xi|^{2})\mathcal{F}w(\xi)\right)(x), \quad w \in \mathcal{S}(\mathbb{R}^{d}).
	\]
	The functions $g$, $h$, $f$, and $\eta$ are nonlinear functions that depend on $(t,x,\omega)$ and the unknown function $w$. Such stochastic partial differential equations (SPDEs) can be used to model stochastic effects of particles in a medium with thermal memory, or particles subject to adhesion and trapping mechanisms \cite{Chen 1}.
Throughout this paper, we typically suppress the dependence on $\omega\in\Omega$ when functions depend on $(t,x,\omega)$.

The study of fractional stochastic partial differential equations remains an active research area in fractional calculus. Krylov \cite{Krylov} pioneered the $L_{p}(p\geq 2)$ theory for classical SPDEs on $\mathbb{R}^{d}$ with zero initial conditions, that is
$
dw=\Delta w+g\,dW_{t}.
$
His analytical approach, based on controlling sharp maximal functions of $\nabla w$, established maximal regularity of solutions. This methodology has been subsequently extended to SPDEs with various spatial operators. Kim \cite{Kim1} first applied this analytical framework to classical SPDEs with $\phi(\Delta)$-type spatial operators. Chen \cite{Chen 1} investigated the $L_{2}$ theory for equations with both divergence and non-divergence form time fractional derivatives:
\begin{align}\label{feisanduxing}
	\partial^{\alpha}_{t}w&=\left(a^{ij}w_{x^{i}x^{j}}+b^{i}w_{x^{i}}+cw+f(w)\right)\notag\\
	&\qquad+\sum_{k=1}^{\infty}\partial^{\gamma}_{t}\int_{0}^{t}\left(\sigma^{ijk}w_{x^{i}x^{j}}+\mu^{ik}w_{x^{i}}
	+\nu^{k}w+g^{k}(w)\right)\,dW^{k}_{s},
\end{align}
and
\begin{align}\label{sanduxing}
	\partial^{\alpha}_{t}w&=\left(D_{i}(a^{ij}w_{x^{i}x^{j}}+b^{i}w_{x^{i}}+f^{i}(w))+cw+h(w)\right)\notag\\
	&\qquad+\sum_{k=1}^{\infty}\partial^{\gamma}_{t}\int_{0}^{t}\left(\sigma^{ijk}w_{x^{i}x^{j}}+\mu^{ik}w_{x^{i}}
	+\nu^{k}w+g^{k}(w)\right)\,dW^{k}_{s},
\end{align}
where $\{W^{k}_{t}\}$ denotes a sequence of independent one-dimensional Wiener processes.Building upon scaling properties of fractional heat equation solution operators, Kim \cite{Kim6} extended Krylov's analytical method to establish Sobolev regularity theory for solutions of \eqref{feisanduxing} and \eqref{sanduxing}. Subsequently, Kim \cite{Kim3} employed a combination of Krylov's techniques and $H^{\infty}$ calculus, along with fixed-point arguments, to develop Sobolev theory for time-fractional SPDEs driven by $\phi(\Delta)$-type operators:
\begin{align*}
	\partial^{\alpha}_{t}w&=\phi(\Delta)w+f(u)+\sum_{k=1}^{\infty}\partial^{\beta}_{t}\int_{0}^{t}g^{k}(w)\,dB_{s},\\
	\partial^{\alpha}_{t}w&=\phi(\Delta)w+f(u)+\partial^{\beta-1}_{t}g(w)\dot{W},
\end{align*}
with applications to Gaussian space-time white noise. Additional results concerning mild solutions of stochastic partial differential equations can be found in \cite{Wu} and references therein.
	
Equation \eqref{TSFSPDE} incorporates both temporal non-locality through the Caputo derivative $\partial^{\alpha}_{t}$ and spatial non-locality through the operator $\phi(\Delta)$. The Caputo derivative $\partial^{\alpha}_{t}$ effectively models subdiffusive behaviors arising from phenomena such as particle adhesion and trapping, as discussed in \cite{Gorenflo, Y zhou2}. The spatial non-local operator $\phi(\Delta)$, which serves as the infinitesimal generator of a subordinate Brownian motion, captures long-range particle jumps, diffusion on fractal structures, and the long-term behavior of particles moving in quenched disordered force fields; see \cite{Fogedby, Bouchaud}. Notably, when $\phi(x)=x^{\frac{\beta}{2}}$ with $0<\beta<2$, the operator $\phi(\Delta)$ reduces to the fractional Laplacian $(-\Delta)^{\frac{\beta}{2}}$, establishing a connection to isotropic $\beta$-stable processes. Furthermore, SPDE \eqref{TSFSPDE} can model stochastic effects in media with thermal memory or particle behaviors subject to adhesion and trapping mechanisms; see \cite{Chen 1}.
Moreover, it is important to note that since random effects in natural phenomena can be discontinuous in time, it is of significant practical relevance to consider stochastic partial differential equations driven simultaneously by both Wiener processes and L\'{e}vy processes. The model we consider, \eqref{TSFSPDE}, accommodates a non-zero initial value $w_{0}$, making it a natural generalization of the models studied by K.H. Kim and others \cite{Chen, Kim4, Kim3, Kim6}.

Our main contributions are as follows. For $p \geq 2$, by employing harmonic analysis techniques, we prove the existence, uniqueness, and regularity estimates for weak solutions in appropriate Sobolev spaces for the time-space fractional stochastic partial differential equation (TSFSPDE) \eqref{TSFSPDE}. Furthermore, we apply this regularity result to the nonlinear stochastic partial differential equation (NLSPDE) \eqref{TSFSPDE1} driven by a cylindrical Wiener process, under the dimensional constraint $d < 2\kappa_{0}\big(2 - \frac{(2\sigma_{2} - 2/p)_{+}}{\alpha}\big)$, thereby obtaining the regularity result for \eqref{TSFSPDE1}. To achieve this, noting the simultaneous presence of both the Wiener process and the L\'{e}vy process, we employ distinct techniques to handle the respective differences arising from each.
For the stochastic term induced by the Wiener process, our approach differs from that of Kim \cite{Kim3}, who controlled the sharp maximal function of the nonlocal derivative of the solution operator via the Hardy-Littlewood function of the free term $h$. Instead, we rely primarily on the Fefferman-Stein theorem and Marcinkiewicz interpolation to provide an alternative proof of the result in Kim \cite{Kim3}; our procedure is entirely different. Specifically, for the $B^{k}_{t}$ case, we transform estimates of $\phi(\Delta)^{\frac{\tilde{\delta}_{0}}{2}}w$ into bounds for the operator $\mathbb{S}h(t,x)$:
\[
\mathbb{S}h(t,x) = \left( \int_{-\infty}^{t} \left| \mathcal{S}^{\frac{\tilde{\delta}_{0}}{2}}_{\alpha,\sigma_{1}}(t-s) \star h \right|_{H}^{2} \, ds \right)^{\frac{1}{2}},
\]
which furnishes an alternative proof to Kim \cite{Kim3} in Lemma \ref{strong type p}.
For the stochastic term generated by the L\'{e}vy process, we rely on tools from stochastic analysis and harmonic analysis, combining the Burkholder-Davis-Gundy inequality with the Littlewood-Paley localization method to derive sharp upper bound estimates for the nonlocal derivative of the solution operator. Specifically, we derive the sharp upper bound estimates for $\left( \phi(-\Delta) \right)^{\frac{\tilde{\delta}_{1}+\varepsilon}{2}}w$, namely
\[
\big\| \left( \phi(-\Delta) \right)^{\frac{\tilde{\delta}_{1}+\varepsilon}{2}} w \big\|^{p}_{\mathscr{L}_{p}(T)}
\leq C \sum_{r=1}^{d_{1}} \left\| \int_{0}^{t} \left| \left( \phi(-\Delta) \right)^{\frac{\tilde{\delta}_{1}+\varepsilon}{2}} \mathcal{S}_{\alpha,\sigma_{2}}(t-s) \star f^{r}(s) \right|^{p} \, ds \right\|_{L_{1}\left( [0,T] \times \Omega; L_{1}(l_{2}) \right)}.
\]
By integrating the distinct estimates established above for the L\'{e}vy process and the Wiener process, and using methods from harmonic analysis along with a fixed-point theorem, we establish the regularity results for solutions to \eqref{TSFSPDE} and \eqref{TSFSPDE1} for $p \geq 2$. Moreover, we note that due to the limitations imposed by the sharp maximal function estimates for the derivative operators, the aforementioned regularity results fail for $1 \leq p < 2$. To address this, for $1 \leq p \leq 2$, inspired by \cite{Wu}, we employ techniques from stochastic analysis and a fixed-point theorem to establish the existence and uniqueness of local mild solutions to \eqref{TSFSPDE1}.
	
The remainder of this paper is organized as follows.~Section 2 presents fundamental concepts including fractional derivatives, Poisson random measures, and Bernstein operators $\phi(\Delta)$.~Section 3 develops crucial estimates through harmonic analysis methods and proves the existence and regularity of weak solutions in Sobolev spaces.~Section 4 establishes the existence and uniqueness of local mild solutions.

\section{Preliminaries}
	We introduce some necessary notions for this paper. We use $C$ to denote a generic constant that may change from line to line. We define the ball $B_\delta(x) := \{z \in \mathbb{R}^d : |x-z| < \delta\}$ with $B_\delta := B_\delta(0)$. For a multi-index $\gamma = (\gamma_1,\ldots,\gamma_d)$, we set
	\[
	\frac{\partial}{\partial_{x_{i}}}w=\nabla_{x_{i}}w, \quad \text{and} \quad \nabla^{\gamma}_{x}w=\nabla^{\gamma_{1}}_{x_{1}}\nabla^{\gamma_{2}}_{x_{2}}\dots\nabla^{\gamma_{d}}_{x_{d}}w,\quad |\gamma|=\gamma_{1}+\gamma_{2}+\cdots+\gamma_{d}.
	\]
	The space $L_{p}(X,\nu,B)$ consists of all $\nu$-measurable $B$-valued functions on $X$ such that
	\[
	\int_{X} \|w\|_{B}^{p}  d\nu < \infty,
	\]
	and we write $L_{p}(X,\nu,\mathbb{R})=L_{p}(X,\nu)$ for simplicity. Let $\mathcal{S}$ denote the Schwartz space, and $\mathcal{S}'$ its dual space, i.e., the space of tempered distributions. We use $\mathcal{F}:\mathcal{S}\rightarrow\mathcal{S}$ to denote the Fourier transform. Using duality, the Fourier transform $\mathcal{F}$ can be extended to $\mathcal{S}'$: for $u\in\mathcal{S}'$, we define $\mathcal{F}u$ by
	\[
	\langle\mathcal{F}u,\phi\rangle=\langle u,\mathcal{F}\phi\rangle
	\]
	for any $\phi\in\mathcal{S}$. We denote by $\mathcal{F}^{-1}$ the inverse Fourier transform. We use $*$ and $\star$ to denote convolution in time and space, respectively. If a function $f:\mathbb{R}_{+}\rightarrow\mathbb{R}$ is right-continuous with left limits, we say $f$ is c\`{a}dl\`{a}g.
	
	\begin{definition}
		For a function $f\in L_{1}\left(0,T;\mathcal{S}\right)$, the Caputo derivative $\partial^{\alpha}_{t}f$ for $0<\alpha<1$ is defined as
		\[
		\partial^{\alpha}_{t}f(t,x)=D^{\alpha}_{t}\left(f(t,x)-f(t,0)\right)=\frac{d}{dt}\left(g_{1-\alpha}\ast(f(t,x)-f(0,x))\right).
		\]
	\end{definition}
	
	The Mittag-Leffler function $E_{\alpha,\beta}(z)$ plays an important role in fractional calculus and is defined as
	\[
	E_{\alpha,\beta}(\varrho)=\sum_{k=0}^{\infty}\frac{\varrho^{k}}{\Gamma(k\alpha+\beta)},\quad \alpha,\beta>0,\varrho\in\mathbb{C}.
	\]
	The following relation can be found in \cite{Gorenflo}:
	\begin{align}\label{asympotic behavior}
		E_{\alpha,\beta}(\varrho)=\frac{1}{\pi\alpha}\int_{0}^{\infty}r^{\frac{1-\beta}{\alpha}}
		e^{-r^{\frac{1}{\alpha}}}\frac{r\sin(\pi(1-\beta))-\varrho\sin(\pi(1-\beta+\alpha))}{r^{2}-2r\varrho\cos(\pi\alpha)+\varrho^{2}}dr,
	\end{align}
	where $0<\alpha\leq 1$, $\beta<1+\alpha$, $|\arg(\varrho)|\geq \alpha\pi$, and $\varrho\neq 0$.
	
	Next, we introduce some facts about the operator $\phi(\Delta)$; for more details, see \cite{Kim1,Kim2,Kim3}. Let $\phi$ be a Bernstein function defined by
	\[
	\phi(x)=bx+\int_{(0,\infty)}\left(1-e^{-tx}\right)\nu(dt),\quad b\geq 0,\quad\int_{(0,\infty)}\left(1\wedge t\right)\nu(dt)<\infty.
	\]
	We easily obtain
	\begin{align}\label{B.S.T,func.bdd}
		|x^n \phi^{(n)}(x)| \leq b I_{n=1} + \int_{0}^{\infty} (t x)^n e^{-t x} \nu(dt) \lesssim \phi(x).
	\end{align}
	In this paper, we assume $b=0$, and we adopt the lower scaling condition from \cite{Kim1,Kim2}.
	
	\begin{assumption}
		There exist $\kappa_{0}\in(0,1]$ and $c_{1}>0$ such that
		\begin{align}\label{lower scailing condition}
			c_{1}\left(\frac{M}{m}\right)^{\kappa_{0}}\leq\frac{\phi(M)}{\phi(m)}\leq\frac{M}{m},\quad \text{for all } 0<m<M<\infty.
		\end{align}
	\end{assumption}
	
	From the above assumption, it is easy to obtain
	\begin{align}\label{Pro.Convergence}
		\int_{\varrho^{-1}}^{\infty}t^{-1}\phi(t^{-2})\,dt=\int_{1}^{\infty}t^{-1}
		\frac{\phi(\varrho^{2}t^{-2})}{\phi(\varrho^{2})}\phi(\varrho^{2})\,dt\leq C\int_{1}^{\infty}t^{-1-2\kappa_{0}}\,dt
		\phi(\varrho^{2})\leq C\phi(\varrho^{2}).
	\end{align}
	
	As is well known, for every Bernstein function $\phi$, there exists a subordinator $S_{t}$ defined on a probability space $\left(\Omega,\mathcal{F},\mathbb{P}\right)$ such that $\mathbb{E}[e^{-xS_{t}}]=e^{-t\phi(x)}$. Consider the $d$-dimensional subordinate Brownian motion $X_{t}:=W_{S_{t}}$, whose transition probability density function $p_{d}(t,x)$ can be expressed as
	\[
	p_{d}(t,x)=\int_{(0,\infty)}\frac{1}{(4\pi s)^{\frac{d}{2}}}\exp\left(\frac{-|x|^{2}}{4s}\right)\vartheta_{t}(ds),
	\]
	where $\vartheta_{t}$ is the distribution function of $S_{t}$. Therefore, $\phi(\Delta)$ is the infinitesimal generator of the subordinate Brownian motion $X_{t}$, i.e., for $g\in\mathcal{S}$,
	\[
	\phi(\Delta)g(x)=\lim_{t\rightarrow 0}\frac{\mathbb{E}g(x+X_{t})-g(x)}{t},
	\]
	which can equivalently be expressed as
	\[
	\phi(\Delta)g(x)=\mathcal{F}^{-1}\left(-\phi(|\xi|^{2})\mathcal{F}g(\xi)\right)(x),
	\]
	and
	\[
	\phi(\Delta)g(x)=\int_{\mathbb{R}^{d}}\left(g(x+y)-g(x)-\nabla g(x)y\textbf{I}_{|y|\leq 1}\right)j(|y|)\,dy,
	\]
	where the jump kernel $j$ is given by
	\[
	j(|y|)=\int_{(0,\infty)}(4\pi t)^{-\frac{d}{2}}\exp\left(-\frac{|y|^{2}}{4t}\right)\nu(dt).
	\]
	Moreover, for any $\zeta\in\left(0,1\right)$, the function $\phi^{\zeta}$ is also a Bernstein function, and
	\[
	\phi^{\zeta}(x)=\int_{(0,\infty)}\left(1-e^{-tx}\right)\nu_{\zeta}(dt),\quad\int_{(0,\infty)}\left(1\wedge t\right)\nu_{\zeta}(dt)<\infty.
	\]
	The operator $\phi^{\zeta}(\Delta)$ can be defined as
	\[
	\phi^{\zeta}(\Delta)g(x)=\mathcal{F}^{-1}\left(-\left(\phi(|\xi|^{2})\right)^{\zeta}\mathcal{F}g(\xi)\right)(x).
	\]
	Furthermore, we also have
	\[
	\phi^{\zeta}(\Delta)=\int_{\mathbb{R}^{d}}\left(g(x+y)-g(x)-\nabla g(x)y\textbf{I}_{|y|\leq r}\right)j_{\zeta}(|y|)\,dy,\quad \text{for any }r>0,
	\]
	where
	\[
	j_{\zeta}(|y|)=\int_{(0,\infty)}(4\pi t)^{-\frac{d}{2}}\exp\left(-\frac{|y|^{2}}{4t}\right)\nu_{\zeta}(dt),
	\]
	and $j_{\zeta}(|y|)\lesssim \left(\phi(|y|^{-2})\right)^{\zeta}/|y|^{d}$, see \cite{Kim3}.
	
	Let $X^{1}_{t}$ be a subordinator with characteristic exponent $\exp(-t\lambda^{\alpha})$, and $X^{2}_{t}$ be the inverse subordinator of $X^{1}_{t}$, i.e.,
	\[
	X^{2}_{t}=\inf\left\{s:X^{1}_{s}>t\right\}.
	\]
	Consider the subordinate process $Y_{t}:=W_{X^{2}_{t}}$, whose transition probability density $\mathcal{S}(t,x)$ is the fundamental solution to the following fractional equation:
	\[
	\partial^{\alpha}_{t}w(t,x)=\phi(\Delta)w(t,x),\quad w(0,\cdot)=\delta_{0},
	\]
	and
	\[
	\mathcal{S}(t,x)=\int_{0}^{\infty}p(t,x)\varpi(t,r)\,dr,
	\]
	where $\varpi(t,r)$ is the transition probability density of $X^{2}_{t}$. For any $\beta\in\mathbb{R}$, we denote $\varpi_{\alpha,\beta}(t,r)=D^{\beta-\alpha}_{t}\varpi(t,r)$ and define the functions
	\[
	\mathcal{S}_{\alpha,\beta}(t,x)=\int_{0}^{\infty}p(r,x)\varpi_{\alpha,\beta}(t,r)\,dr,\quad (t,x)\in (0,\infty)\times \mathbb{R}^{d}\setminus\{0\},
	\]
	and
	\[
	\mathcal{S}^{\zeta}_{\alpha,\beta}(t,x)=\int_{0}^{\infty}\phi(\Delta)^{\zeta}p(r,x)\varpi_{\alpha,\beta}(t,r)\,dr,\quad (t,x)\in (0,\infty)\times \mathbb{R}^{d}\setminus\{0\}.
	\]
	The following properties of $\mathcal{S}_{\alpha,\beta}(t,x)$ and $\mathcal{S}_{\alpha,\beta}^{\zeta}(t,x)$ can be found in \cite{Kim2,Kim3}.
	
	\begin{lemma}\label{Some proposition of S}
		For $k\in\mathbb{N}_{0}$, $\alpha\in\left(0,1\right)$, $\beta\in\mathbb{R}$, and $\zeta\in (0,1)$, we have the following facts:
		\begin{enumerate}[\rm(i)]
			\item For any $(t,x)\in (0,\infty)\times \mathbb{R}^{d}\setminus\{0\}$,
			\[
			\mathcal{S}_{\alpha,\beta}(t,x)=D^{\beta-\alpha}_{t}\mathcal{S}(t,x),
			\]
			\item $D^{k}_{x}\mathcal{S}_{\alpha,\beta}(t,x)$ and $D^{k}_{x}\mathcal{S}^{\zeta}_{\alpha,\beta}(t,x)$ are well-defined for $(t,x)\in (0,\infty)\times \mathbb{R}^{d}\setminus\{0\}$ and satisfy
			\begin{align}
				\left|D^{k}_{x}\mathcal{S}_{\alpha,\beta}(t,x)\right|&\lesssim t^{2\alpha-\beta}\frac{\phi(|x|^{-2})}{|x|^{d+k}},\label{daoshu estimate 1}\\
				\left|D^{k}_{x}\mathcal{S}^{\zeta}_{\alpha,\beta}(t,x)\right|&\lesssim t^{\alpha-\beta}\frac{\phi(|x|^{-2})^{\zeta}}{|x|^{d+k}},\label{daoshu estimate 2}
			\end{align}
			and for $t^{\alpha}\phi(|x|^{-2})\geq 1$,
			\begin{align}
				\left|D^{k}_{x}\mathcal{S}_{\alpha,\beta}(t,x)\right|&\lesssim\int_{\left(\phi(|x|^{-2})\right)^{-1}}^{2t^{\alpha}}
				\left(\phi^{-1}(\varrho^{-1})\right)^{\frac{d+k}{2}}t^{-\beta}\,d\varrho,\label{daoshu estimate 3}\\
				\left|D^{k}_{x}\mathcal{S}_{\alpha,\beta}^{\zeta}(t,x)\right|&\lesssim\int_{\left(\phi(|x|^{-2})\right)^{-1}}^{2t^{\alpha}}
				\left(\phi^{-1}(\varrho^{-1})\right)^{\frac{d+k}{2}}\varrho^{-\zeta}t^{-\beta}\,d\varrho.\label{daoshu estimate 4}
			\end{align}
			\item
			\begin{align}
				\int_{\mathbb{R}^{d}}|\mathcal{S}_{\alpha,\beta}(t,x)|\,dx\lesssim t^{\alpha-\beta},\quad \int_{\mathbb{R}^{d}}|\mathcal{S}^{\zeta}_{\alpha,\beta}(t,x)|\,dx\lesssim t^{\alpha(1-\zeta)-\beta},\label{L1 estimate}\\
				\mathcal{F}\mathcal{S}_{\alpha,\beta}(t,\xi)=t^{\alpha-\beta}E_{\alpha,1-\beta+\alpha}(-t^{\alpha}\phi(|\xi|^{2})),\label{Fourier transform 1}\\
				\mathcal{F}\mathcal{S}^{\zeta}_{\alpha,\beta}(t,\xi)=-t^{\alpha-\beta}\phi(|\xi|^{2})^{\zeta}
				E_{\alpha,1-\beta+\alpha}(-t^{\alpha}\phi(|\xi|^{2})).\label{Fourier transform 2}
			\end{align}
		\end{enumerate}
	\end{lemma}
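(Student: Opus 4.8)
The plan is to treat the Fourier-side representation as the foundation: once the transform identities \eqref{Fourier transform 1}--\eqref{Fourier transform 2} and item (i) are established, the pointwise and $L_1$ bounds follow by convolving heat-kernel estimates for $p(r,\cdot)$ against moments of the inverse-subordinator density $\varpi$. I would carry this out in the order (i) $\to$ Fourier identities $\to$ $L_1$ estimates $\to$ pointwise derivative estimates, since each step supplies the integrability needed for the next.

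Item (i) is immediate from the definitions: since $\mathcal{S}_{\alpha,\beta}(t,x)=\int_0^\infty p(r,x)\varpi_{\alpha,\beta}(t,r)\,dr$ with $\varpi_{\alpha,\beta}=D_t^{\beta-\alpha}\varpi$, one interchanges $D_t^{\beta-\alpha}$ with the $r$-integral, justified by the integrability established below, to recover $D_t^{\beta-\alpha}\mathcal{S}(t,x)$. For \eqref{Fourier transform 1}, take the spatial Fourier transform of the defining equation $\partial_t^\alpha\mathcal{S}=\phi(\Delta)\mathcal{S}$, $\mathcal{S}(0,\cdot)=\delta_0$, reducing it to the scalar Caputo problem $\partial_t^\alpha\widehat{\mathcal{S}}=-\phi(|\xi|^2)\widehat{\mathcal{S}}$, $\widehat{\mathcal{S}}(0,\xi)=1$, whose solution is $\widehat{\mathcal{S}}(t,\xi)=E_{\alpha,1}(-t^\alpha\phi(|\xi|^2))$. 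Applying the term-by-term identity $D_t^{\gamma}E_{\alpha,1}(\lambda t^\alpha)=t^{-\gamma}E_{\alpha,1-\gamma}(\lambda t^\alpha)$ with $\gamma=\beta-\alpha$ gives \eqref{Fourier transform 1}; then \eqref{Fourier transform 2} follows by inserting the Fourier symbol $-\phi(|\xi|^2)^\zeta$ of $\phi(\Delta)^\zeta$ inside the $r$-integral defining $\mathcal{S}^\zeta_{\alpha,\beta}$.

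The structural fact driving the remaining estimates is the self-similarity of the inverse-subordinator density, $\varpi(t,r)=t^{-\alpha}M_\alpha(rt^{-\alpha})$ with $M_\alpha$ the M-Wright function. Applying $D_t^{\beta-\alpha}$ preserves this form with a shifted exponent, giving $\varpi_{\alpha,\beta}(t,r)=t^{-\beta}\widetilde{M}(rt^{-\alpha})$ for a profile $\widetilde{M}$ with finite moments. The change of variables $r=t^\alpha u$ then yields the moment bounds $\int_0^\infty r^m|\varpi_{\alpha,\beta}(t,r)|\,dr\lesssim t^{(m+1)\alpha-\beta}$ and $\int_0^\infty r^{-\zeta}|\varpi_{\alpha,\beta}(t,r)|\,dr\lesssim t^{(1-\zeta)\alpha-\beta}$. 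Combining the $m=0$ case (respectively the $r^{-\zeta}$ bound) with the normalizations $\int_{\mathbb{R}^d}p(r,x)\,dx=1$ and $\int_{\mathbb{R}^d}|\phi(\Delta)^\zeta p(r,x)|\,dx\lesssim r^{-\zeta}$, the latter coming from the spatial scaling of $p$, immediately produces the two $L_1$ estimates in \eqref{L1 estimate}.

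Finally, the pointwise bounds \eqref{daoshu estimate 1}--\eqref{daoshu estimate 4} are obtained by differentiating under the integral sign and feeding in heat-kernel bounds for $p(r,\cdot)$ and $\phi(\Delta)^\zeta p(r,\cdot)$. For the global bounds \eqref{daoshu estimate 1}--\eqref{daoshu estimate 2} one uses the far-field estimates $|D_x^k p(r,x)|\lesssim r\,\phi(|x|^{-2})/|x|^{d+k}$ and $|D_x^k\phi(\Delta)^\zeta p(r,x)|\lesssim \phi(|x|^{-2})^\zeta/|x|^{d+k}$, then integrates against $\varpi_{\alpha,\beta}$ using the $m=1$ moment, which yields the factor $t^{2\alpha-\beta}$, and the $m=0$ moment, which yields $t^{\alpha-\beta}$, respectively. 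In the near-diagonal regime $t^\alpha\phi(|x|^{-2})\ge 1$ one instead invokes the on-diagonal bound $|D_x^k p(r,x)|\lesssim(\phi^{-1}(r^{-1}))^{(d+k)/2}$, splits the $r$-integral at $r\sim(\phi(|x|^{-2}))^{-1}$, and uses that $\varpi_{\alpha,\beta}$ is concentrated on $r\lesssim t^\alpha$ with $|\varpi_{\alpha,\beta}|\lesssim t^{-\beta}$, delivering the integral forms \eqref{daoshu estimate 3}--\eqref{daoshu estimate 4}. I expect the main obstacle to be establishing these sharp heat-kernel derivative bounds with the precise $\phi$-dependence: one must control both $p$ and its $\phi(\Delta)^\zeta$-image using only the lower scaling condition \eqref{lower scailing condition}, together with asymptotics of type \eqref{asympotic behavior} and \eqref{B.S.T,func.bdd}, and it is the careful matching of the split at $r\sim(\phi(|x|^{-2}))^{-1}$ that produces the sharp exponents.
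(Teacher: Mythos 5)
The paper itself gives no proof of this lemma: immediately before the statement it says the properties ``can be found in \cite{Kim2,Kim3}'', so the lemma is imported wholesale from those references. Your outline is therefore not competing with an in-paper argument; it is a reconstruction of the proof in the cited works, and structurally it is the correct one. The order you propose (item (i) from the definition $\varpi_{\alpha,\beta}=D_t^{\beta-\alpha}\varpi$; the Fourier identities \eqref{Fourier transform 1}--\eqref{Fourier transform 2} from the scalar Caputo problem and the rule $D_t^{\gamma}E_{\alpha,1}(\lambda t^{\alpha})=t^{-\gamma}E_{\alpha,1-\gamma}(\lambda t^{\alpha})$; then the $L_1$ and pointwise bounds through the subordination formula) is exactly the route of \cite{Kim2,Kim3}, and your exponent bookkeeping is consistent: the $m=1$ moment of $\varpi_{\alpha,\beta}$ gives the factor $t^{2\alpha-\beta}$ in \eqref{daoshu estimate 1}, the $m=0$ and $r^{-\zeta}$ moments give $t^{\alpha-\beta}$ and $t^{\alpha(1-\zeta)-\beta}$ in \eqref{daoshu estimate 2} and \eqref{L1 estimate}, and the splitting at $r\sim(\phi(|x|^{-2}))^{-1}$ produces the integral forms \eqref{daoshu estimate 3}--\eqref{daoshu estimate 4}.

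What keeps your text from being a proof is that its three load-bearing inputs are asserted rather than established, and these are precisely the content of the cited lemmas. (a) The two-regime kernel bounds $|D_x^kp(r,x)|\lesssim r\,\phi(|x|^{-2})/|x|^{d+k}$, $|D_x^kp(r,x)|\lesssim(\phi^{-1}(r^{-1}))^{(d+k)/2}$ and their $\phi(\Delta)^{\zeta}$ analogues require the lower scaling condition \eqref{lower scailing condition} and are the genuinely hard part, as you yourself note. (b) The claim that $D_t^{\beta-\alpha}$ preserves the self-similar form, $\varpi_{\alpha,\beta}(t,r)=t^{-\beta}\widetilde{M}(rt^{-\alpha})$, is true but cannot be obtained by naive termwise differentiation of the M-Wright series (the Riemann--Liouville derivative of $t^{-\alpha(k+1)}$ is ill-defined for large $k$); it should be justified through the Laplace-transform representation of $\varpi(\cdot,r)$, and it is the sole source of the uniform bound $|\varpi_{\alpha,\beta}|\lesssim t^{-\beta}$ and of all moment bounds you use. (c) Your phrase that $\varpi_{\alpha,\beta}$ is ``concentrated on $r\lesssim t^{\alpha}$'' is not literally correct, since the profile has unbounded support; the tail $r>2t^{\alpha}$ must be absorbed into the right-hand side of \eqref{daoshu estimate 3} using the stretched-exponential decay of $\widetilde{M}$, and the region $r<(\phi(|x|^{-2}))^{-1}$ must be handled with the far-field bound and then compared with the bottom end of the integral via \eqref{lower scailing condition}; both comparisons use the standing hypothesis $t^{\alpha}\phi(|x|^{-2})\geq1$. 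None of these is a wrong step, but together they are the actual work, so as written your proposal should either carry them out or, like the paper, cite \cite{Kim2,Kim3} for them.
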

	
	Next, we introduce some facts from stochastic analysis; see \cite{Ikeda,Wu,Wu2}. Let $\left(\Omega,\mathscr{F},\mathbb{P}\right)$ be a complete probability space, and $\mathscr{F}_{t}$ be a filtration of $\sigma$-algebras of $\mathscr{F}$ that is increasing and right-continuous. Let $\tilde{\mathscr{F}}$ be the $\sigma$-algebra generated by $\mathscr{F}_{t}$, i.e., $\tilde{\mathscr{F}}=\sigma\left\{\left(s,t\right]\times E:s<t,E\in\mathscr{F}_{s}\right\}$.
	
	For stochastic processes $X^{1}_{t},X^{2}_{t}$ with the same index set $t\in [0,T]$, we say $X^{2}_{t}$ is a modification of $X^{1}_{t}$ and write $X^{1}_{t}=X^{2}_{t}$ if
	\[
	\mathbb{P}\left\{\omega:X^{1}_{t}(\omega)=X^{2}_{t}(\omega),\forall t\in [0,T]\right\}=1.
	\]
	
To better understand Lévy noise and Gaussian noise, we begin by introducing the definition of the Poisson random measure~\cite{Ikeda}. For the measurable space $\left(A,\mathcal{B}(A),\mu(d\xi)\right)$, there exists a Poisson random measure $\Pi$ defined on
	\[
	\left([0,\infty)\times \mathbb{R}^{d}\otimes A,\mathcal{B}([0,\infty)\times\mathbb{R}^{d})\otimes\mathcal{B}(A),dtdx\otimes d\mu\right)
	\]
	such that
	\[
	\Pi:\mathcal{B}([0,\infty)\times\mathbb{R}^{d})\times\mathcal{B}(A)\times\Omega\rightarrow \mathbb{N}\cup\{0\}\cup\{\infty\},
	\]
	and
	\[
	\mathbb{E}\Pi\left([s,t],M,N,\cdot\right)=|t-s||M||N|,\quad \text{for any }[s,t]\times M\in\mathcal{B}(\mathbb{R}^{d}),N\in\mathcal{B}(A).
	\]
	In fact, such a Poisson random measure always exists; we can take $\Pi$ to be the canonical random measure
	\[
	\Pi([s,t]\times M\times N,\omega)=\sum_{n=1}^{\infty}\sum_{j=1}^{\delta_{n}(\omega)}\textbf{I}_{\{([s,t]\times M)\times A_{n})\times (N\times B_{n})\}}(\xi_{n,j}(\omega))\textbf{I}_{\{\omega:\delta_{n}(\omega)\geq 1\}}(\omega),
	\]
	with
	\[
	\mathbb{P}\{\omega\in\Omega:\xi_{n,j}(\omega)\in [s,t]\times M\times N\}=\frac{|t-s||M||N|}{|A_{n}||B_{n}|},
	\]
	for any $[s,t]\times M\in \mathcal{B}([s,t]\times \mathbb{R}^{d})\times A_{n}$ and $N\in \mathcal{B}(A)\times B_{n}$. Furthermore, we can take
	\[
	\mathscr{F}_{t}=\sigma\left\{\Pi([0,t]\times M\times N,\cdot):M\in\mathcal{B}(\mathbb{R}^{d}),N\in\mathcal{B}(A)\right\}
	\vee \mathcal{N},\quad \mathbb{P}(\mathcal{N})=0
	\]
	such that
	\[
	\left\{\Pi([0,t+s],M,N,\cdot)-\Pi([0,t],M,N,\cdot)\right\}_{s>0,(M,N)\in\mathcal{B}(\mathbb{R}^{d})\times\mathcal{B}(A)}
	\]
	is independent of $\mathcal{F}_{t}$. Based on the Poisson random measure $\Pi$, we can define the martingale measure $\tilde{\Pi}$ by
	\[
	\tilde{\Pi}(t,M,N,\omega)=\Pi([0,t]\times M,N,\omega)-t|M|\mu(N)
	\]
	with $\mathbb{E}[\tilde{\Pi}(t,M,N,\omega)]=0$ and $\mathbb{E}[|\tilde{\Pi}(t,M,N,\omega)|^{2}]=t|M|\mu(N)$. For an $\mathcal{F}_{t}$-predictable stochastic function $f$ satisfying
	\[
	\mathbb{E}\int_{0}^{t}\int_{M}\int_{N}|f(s,x,\xi)|\,ds\,dx\,\mu(d\xi)<\infty,
	\]
	we can define the $\mathcal{F}_{t}$-martingale
	\begin{align}\label{yangdefine}
		\int_{0}^{t}\int_{M}\int_{N}f(s,x,\xi,\omega)\tilde{\Pi}(ds dx d\xi,\omega)
		&:=\int_{0}^{t}\int_{M}\int_{N}f(s,x,\xi,\omega)\Pi(ds dx d\xi,\omega)\notag\\
		&\quad-\int_{0}^{t}\int_{M}\int_{N}f(s,x,\xi,\omega)\,ds\,dx\mu(d\xi).
	\end{align}
	Moreover, if
	\begin{align}\label{biancha}
		\mathbb{E}\int_{0}^{t}\int_{M}\int_{N}|f(s,x,\xi)|^{2}\,ds\,dx\,\mu(d\xi)<\infty,
	\end{align}
	then \eqref{yangdefine} is a square-integrable martingale with quadratic variation given by \eqref{biancha}.
	
	Note that from the definition of the martingale measure $\tilde{\Pi}$, by the Radon-Nikodym theorem, we can define
	\[
	\Pi_{t,x}(N,\omega)=\frac{\Pi(dtdx,N,\omega)}{dtdx}(t,x),\quad \tilde{\Pi}_{t,x}(N,\omega)
	=\frac{\tilde{\Pi}(dtdx,N,\omega)}{dtdx}(t,x)=\Pi_{t,x}(N,\omega)-\mu(N).
	\]
	By the L\'{e}vy-It\^{o} decomposition,
	\begin{align}\label{levyitodecomposition}
		Z_{t,x}(\omega)=W_{t,x}(\omega)+\int_{N_{0}}g_{1}(t,x,\xi,\omega)\,\mu(d\xi,\omega)+\int_{A\setminus N_{0}}g_{2}(t,x,\xi,\omega)\,\mu(d\xi,\omega),
	\end{align}
	where $W_{t,x}(\omega)$ is the Gaussian space-time white noise, $N_{0}\in \mathcal{B}(A)$, and $\mu(A\setminus N_{0})<\infty$. If $W_{t,x}=0$, we call $Z_{t,x}$ a pure jump L\'{e}vy space-time white noise.
\section{The Solvability And Sobolev Regularity}
In this section, we establish the solvability and Sobolev regularity with respect to NLSPDE \eqref{TSFSPDE} for $p\geq 2$.

For $p\geq 2$, consider the equation
\begin{align}\label{linear equation}
	\partial_t^\alpha w &= g(t,x)+\sum_{k=1}^{\infty}\partial_t^{\sigma_{1}}\int_{0}^{t}h^{k}(s,x)\,dB^{k}_{s}\notag\\
	&\quad+\sum_{k=1}^{\infty}\partial_t^{\sigma_{2}}\int_{0}^{t}f^{k}(s,x)\,dZ^{k}_{s},\quad t>0;\quad w(0) =\textbf{I}_{\alpha p>1}w_{0},
\end{align}
where $\sigma_{1}<\alpha+\frac{1}{2}$ and $\sigma_{2}<\alpha+\frac{1}{p}$.

We define the constants $0<\delta_{0},\delta<2$ by
\begin{align*}
	\delta_{0}&=\textbf{I}_{\sigma_{1}>\frac{1}{2}}(2\sigma_{1}-1)/\alpha+\kappa\textbf{I}_{\sigma_{1}=\frac{1}{2}},\\
	\delta_{1}&=\textbf{I}_{\sigma_{2}>\frac{1}{p}}(2\sigma_{2}-2/p)/\alpha+\kappa\textbf{I}_{\sigma_{2}=\frac{1}{p}},
\end{align*}
where $\kappa>0$ is small. Moreover, we define the initial space $U_{p}^{\phi,\gamma+2}$ as
\[
\mathbb{B}_{p,p}^{\phi,\gamma+2-\frac{2}{\alpha p}}=L_{p}\big(\Omega,\mathscr{F}_{0},B^{\phi,\gamma+2-\frac{2}{\alpha p}}_{p,p}\big).
\]
We define the following stochastic Banach spaces:
\[
\begin{aligned}
	\mathscr{H}^{\phi,\gamma}_{p}(T) &= L_{p}\big((0,T)\times\Omega,\tilde{\mathscr{F}},H_{p}^{\phi,\gamma}\big), &\mathscr{L}_{p}(T) &= \mathscr{H}^{\phi,0}_{p}(T), \\
	\mathscr{H}^{\phi,\gamma}_{p}(T,l_{2}) &= L_{p}\big((0,T)\times\Omega,\tilde{\mathscr{F}},H_{p}^{\phi,\gamma}(l_{2})\big), &\mathscr{L}_{p}(T,l_{2}) &= \mathscr{H}^{\phi,0}_{p}(T,l_{2}), \\
	\mathscr{H}^{\phi,\gamma}_{p}(T,l_{2},d_{1}) &= L_{p}\big((0,T)\times\Omega,\tilde{\mathscr{F}},H_{p}^{\phi,\gamma}(l_{2},d_{1})\big), &\mathscr{L}_{p}(T,l_{2},d_{1}) &= \mathscr{H}^{\phi,0}_{p}(T,l_{2},d_{1}),
\end{aligned}
\]
where $H^{\phi,\gamma}_{p}$ is the Sobolev space with respect to $\phi$, defined by
\[
\|u\|_{H^{\phi,\gamma}_{p}}=\|(I-\phi(\Delta))^{\frac{\gamma}{2}}u\|_{L_{p}},
\]
and the definitions of $H^{\phi,\gamma}_{p}(l_{2})$ and $H^{\phi,\gamma}_{p}(l_{2},d_{1})$ are similar.

Moreover, for any $k\in\mathbb{N}$, from the perspective of Poisson random measures, for the measurable space $(\mathbb{R}^{d_{1}},dy)$, there exists a Poisson random measure $\Pi^{k}$ defined on $([0,\infty)\otimes \mathbb{R}^{d_{1}},\mathcal{B}([0,\infty))\otimes\mathcal{B}(\mathbb{R}^{d_{1}}),dt\otimes dy)$. By the Radon-Nikodym theorem, for $N\in\mathcal{B}(\mathbb{R}^{d_{1}})$, we have
\[
\tilde{\Pi}^{k}(t,N,\omega)=\Pi^{k}(t,N,\omega)-t\mathbb{E}\Pi^{k}(1,N,\omega),\quad \Pi_{t}^{k}(N,\omega)=\frac{\Pi^{k}(dt,N,\omega)}{dt}(t),
\]
and $\tilde{\Pi}^{k}_{t}(N,\omega)=\Pi_{t}^{k}(N,\omega)-\mathbb{E}\Pi^{k}(1,N,\omega)$.~In fact, this is equivalent to
\begin{align*}
	\Pi_{t}^{k}(N,\omega) &:=\#\left\{0\leq s < t: \Delta Z^{k}_{s}:=Z^{k}_{s}-Z^{k}_{s^{-}}\in N\right\},\\
	\tilde{\Pi}^{k}(t,N,\omega) &:= \Pi^{k}(t,N,\omega) - t\mu^{k}(N), \quad \mu^{k}(N) = \mathbb{E}\Pi^{k}(1,N,\omega).
\end{align*}
Note that $Z_{t}^{k}$ is a $d_{1}$-dimensional L\'{e}vy process. Define
\[
(m_{p}(k))^{p} = \int_{\mathbb{R}^{d_{1}}} |y|^{p} \, \mu^{k}(dy), \quad \mu^{k}(N) = \mathbb{E}\tilde{\Pi}^{k}(1,N,\omega).
\]

Note that $Z_{t}^{k}$ is a $d_{1}$-dimensional L\'{e}vy process. Set
\[
(m_{p}(k))^{p}=\int_{\mathbb{R}^{d_{1}}}|y|^{p}\mu^{k}(dy),\quad \mu^{k}(N)=\mathbb{E}\tilde{\Pi}^{k}(1,N,\omega).
\]
If $m_{2}(k)<\infty$, then by the L\'{e}vy-It\^{o} decomposition, there exist a $d_{1}$-dimensional vector $a_{k}=(a^{1k},a^{2k},\ldots,a^{d_{1}k})$, a $d_{1}\times d_{1}$ matrix $b_{k}$, and a $d_{1}$-dimensional Brownian motion $\{\tilde{B}_{t}^{k}\}$ such that
\begin{align*}
	Z^{k}_{t}=a_{k}+b_{k}\tilde{B}_{t}^{k}+\int_{\mathbb{R}^{d_{1}}}y\,\tilde{\Pi}^{k}(t,dy),
\end{align*}
i.e.,
\begin{align*}
	Z^{ik}_{t}=a^{ik}+\sum_{j=1}^{d_{1}}b_{k}^{ij}\tilde{B}_{t}^{jk}+\int_{\mathbb{R}^{d_{1}}}y^{i}\,\tilde{\Pi}^{k}(t,dy),\quad i=1,2,\ldots,d_{1}.
\end{align*}
\begin{definition}\label{solution define}
	For $\gamma\in\mathbb{R}$, we say $w\in\mathcal{H}^{\phi,\gamma+2}_{p}(T)$ if there exist $w_{0}\in \mathbb{B}_{p,p}^{\phi,\gamma+2-\frac{2}{\alpha p}}$, $g\in \mathscr{H}^{\phi,\gamma+2}_{p}(T)$, $h\in\mathscr{H}^{\phi,\gamma+\delta_{0}}_{p}(T,l_{2})$, $f\in\mathscr{H}^{\phi,\gamma+\delta_{1}}_{p}(T,l_{2},d_{1})$ such that Equation \eqref{linear equation} holds in the distributional sense, i.e.,
	\begin{align}\label{solution in the sence of distribution}
		\langle w(t)-\textbf{I}_{\alpha p>1}w_{0},\varphi\rangle&=J_{t}^{\alpha}\langle g(t,\cdot),\varphi\rangle+\sum_{k=1}^{\infty}J_{t}^{\alpha-\delta_{1}}\int_{0}^{t}\langle h^{k}(s,\cdot),\varphi\rangle\,dB^{k}_{s}\notag\\
		&\quad+\sum_{k=1}^{\infty}J_{t}^{\alpha-\delta_{2}}\int_{0}^{t}\langle f^{k}(s,\cdot),\varphi\rangle\,dZ^{k}_{s}
	\end{align}
	holds almost everywhere on $\Omega\times [0,T]$.
\end{definition}

\begin{assumption}
	In this section, we assume the following conditions hold:
	\begin{enumerate}[\rm(i)]
		\item $M_{p}:=\sup_{k}m_{p}(k)<\infty$, for $p\geq 2$.
		\item $Z^{k}_{t}$ is a $d_{1}$-dimensional pure jump L\'{e}vy process, i.e., $a_{k}=0$, $b_{k}=0$.
	\end{enumerate}
\end{assumption}

\begin{remark}\label{some remark about yang}\text{ }
	
	\begin{enumerate}[\rm(i)]
		\item The condition $M_{p}<\infty$ is reasonable by \rm\cite[Remark 2.2]{Kim4}.
		
		\item If $m_{2}(k)<\infty$, then $Z^{ik}_{t}$ is a square-integrable martingale. For $f=\sum\limits_{j=1}^{m}a_{j}\textbf{I}_{(\tau_{j},\tau_{j+1}]}(t)$, where $\tau_{j}$ is a bounded stopping time, we can define the following square-integrable martingale $M_{t}^{k}$ with c\`{a}dl\`{a}g sample paths:
		$$
		M_{t}^{k}=\sum_{j=1}^{m}\int_{0}^{t}fdZ^{ik}_{t}=\sum_{j=1}^{m}a_{j}\left(Z^{ik}_{t\wedge\tau_{j+1}}-Z^{ik}_{t\wedge\tau_{j}}\right).
		$$
		Note that $\mathscr{H}^{\infty}_{0}(T,l_{2})$ is dense in $L_{2}\big([0,T],\tilde{\mathscr{F}},l_{2}\big)$. Therefore, for all $f\in L_{2}\big([0,T]\times\Omega,\tilde{\mathscr{F}},l_{2}\big)$, the stochastic integral $\int_{0}^{t}f\,dZ^{ik}_{t}$ becomes a square-integrable martingale with c\`{a}dl\`{a}g sample paths. Moreover, for $f=\left(f^{1},f^{2},...f^{d_{1}}\right)$, we have
		$$
		\int_{0}^{t}f\,dZ^{k}_{t}=\sum_{i=1}^{d_{1}}\int_{0}^{t}f^{i}\,dZ^{ik}_{t}=\sum_{i=1}^{d_{1}}\int_{0}^{t}\tilde{f}^{i}\,dZ^{ik}_{t},
		$$
		where $\tilde{f}=\big(\tilde{f}^{1},\tilde{f}^{2},...\tilde{f}^{d_{1}}\big)$ is the predictable projection of $f$.
		\item For $f\in L_{2}\left([0,T],\mathscr{F},\mathbb{R}^{d_{1}}\right)$,
		$$
		M_{t}^{k}=\int_{0}^{t}f\,dZ^{k}_{t}=\sum_{i=1}^{d_{1}}\int_{0}^{t}f^{i}\,dZ^{ik}_{t}=\sum_{i=1}^{d_{1}}\int_{0}^{t}
		\tilde{f}^{i}\,dZ^{ik}_{t}
		$$
		is a square-integrable martingale, whose quadratic variation is given by
		$$
		\langle M_{t}^{k},M_{t}^{k}\rangle=\sum_{i,j=1}^{d_{1}}\int_{0}^{t}y^{i}y^{j}f_{s}^{i}f_{s}^{j}\Pi(\,ds,\,dy),
		$$
		see \cite{Protter}.
		\item
		By the Burkholder-Davis-Gundy inequality and \cite[Lemma 2.5]{Chen}, there exists a constant $C=C(p,d_{1},T,m_{p})$ such that
		\begin{align*}
			\mathbb{E}\bigg[\sup_{s\leq t}\bigg|\sum_{k=1}^{\infty}M^{k}_{s}\bigg|^{p}\bigg]
			&\lesssim\mathbb{E}\bigg[\bigg(\sum_{k=1}^{\infty}\int_{0}^{T}\int_{\mathbb{R}^{d_{1}}}|y|^{2}|f^{k}(s)|^{2}
			\Pi_{x}(\,ds,\,dy)\bigg)^{\frac{p}{2}}\bigg]\\
			&\lesssim\mathbb{E}\bigg[\bigg(\int_{0}^{T}\sum_{k=1}^{\infty}|f^{k}(s)|^{2}\,ds\bigg)^{\frac{p}{2}}+\int_{0}^{T}
			\sum_{k=1}^{\infty}|f^{k}(s)|^{p}\,ds\bigg]\\
			&\lesssim\left\|f\right\|_{\mathscr{L}_{p}(T,l_{2},d_{1})}^{p}.
		\end{align*}
	\end{enumerate}
\end{remark}
	\begin{definition}\label{weak solution}
	We say $w$ is a weak solution (in the distributional sense) of Equation \eqref{TSFSPDE} if for any $\varphi\in\mathcal{S}$, the following holds almost everywhere on $\Omega\times [0,T]$:
	\begin{align*}
		\langle w(t)-w_{0},\varphi\rangle&=J_{t}^{\alpha}\langle\phi(\Delta)w,\varphi\rangle+J_{t}^{\alpha}\langle g(w),\varphi\rangle+\sum_{k=1}^{\infty}J_t^{\alpha-\sigma_{1}}\int_{0}^{t}\langle h(w)e_{k}(x),\varphi\rangle\,dB^{k}_{s}\\
		&\quad+\sum_{k=1}^{\infty}J_t^{\alpha-\sigma_{2}}\int_{0}^{t}\langle f(w)e_{k}(x),\varphi\rangle\,d\mathcal{Z}^{k}_{s}.
	\end{align*}
\end{definition}
We define the solution space $\mathcal{H}^{\phi,\gamma+2}_{p}(T)$ with norm
\begin{align*}
	\|w\|_{\mathcal{H}^{\phi,\gamma+2}_{p}(T)}=\|w\|_{\mathscr{H}^{\phi,\gamma+2}_{p}(T)}
	+\|w_{0}\|_{\mathbb{B}_{p,p}^{\phi,\gamma+2-\frac{2}{\alpha p}}}+\inf\|(g,h,f)\|_{\mathcal{F}^{\phi,\gamma+2}_{p}},
\end{align*}
where
\[
\mathcal{F}^{\phi,\gamma+2}_{p}=\mathscr{H}^{\phi,\gamma+2}_{p}(T)\times\mathscr{H}^{\phi,\gamma+\delta_{0}}_{p}(T,l_{2})\times
\mathscr{H}^{\phi,\gamma+\delta_{1}}_{p}(T,l_{2},d_{1}),
\]
and the infimum is taken over all $(g,h,f)\in\mathcal{F}^{\phi,\gamma+2}_{p}$ that satisfy Equation \eqref{linear equation} in the sense of Definition \ref{solution define}.
	\begin{lemma}
		For $\alpha\in (0,1)$, $k\in \mathbb{N}^{+}$, $i\in \{1,2,\ldots,d_{1}\}$, $f\in \mathscr{L}_{2}(T,l_{2})$, and for $X^{k}_{t}=B^{k}_{t}$ or $Z^{k}_{t}$, the following facts hold:
		\begin{enumerate}[\rm(i)]
			\item
			\begin{align*}
				J_{t}^{\alpha}\left(\sum_{k=1}^{\infty}\int_{0}^{\cdot}f^{k}(s)\,dX^{k}_{s}\right)(t)
				&=\sum_{k=1}^{\infty}\frac{1}{\Gamma(1+\alpha)}\int_{0}^{t}(t-s)^{\alpha-1}f^{k}(s)\,dX^{k}_{s},\quad \text{a.e. on }\Omega\times [0,T].
			\end{align*}
			\item
			\begin{align*}
				\partial_{t}^{\alpha}\left(\sum_{k=1}^{\infty}\int_{0}^{\cdot}f^{k}(s)\,dX^{k}_{s}\right)(t)
				&=\sum_{k=1}^{\infty}\frac{1}{\Gamma(1-\alpha)}\int_{0}^{t}(t-s)^{-\alpha}f^{k}(s)\,dX^{k}_{s},\quad \text{a.e. on }\Omega\times [0,T].
			\end{align*}
		\end{enumerate}
	\end{lemma}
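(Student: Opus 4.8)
The plan is to treat both identities as consequences of a stochastic Fubini theorem, which lets us interchange the deterministic (singular) convolution kernel with the stochastic integration against $dX^k_s$, after which the inner integral collapses to an elementary Beta-type integral. Throughout, the hypothesis $f\in\mathscr{L}_2(T,l_2)$ together with Remark~\ref{some remark about yang}(ii) guarantees that, for each $k$ and each finite truncation $\sum_{k\le N}$, the process $r\mapsto\sum_{k\le N}\int_0^r f^k(s)\,dX^k_s$ is a square-integrable c\`adl\`ag martingale (and likewise for the Wiener driver); this is what is needed to invoke stochastic Fubini and to pass to the limit $N\to\infty$ in $L_2(\Omega)$.

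For (i), I would start from the definition
\[
J_t^\alpha g(t)=\frac{1}{\Gamma(\alpha)}\int_0^t(t-r)^{\alpha-1}\Big(\sum_k\int_0^r f^k(s)\,dX^k_s\Big)\,dr .
\]
Since the outer kernel $(t-r)^{\alpha-1}$ is deterministic, nonnegative, and integrable on $(0,t)$ (because $\alpha>0$), while the inner integrand is a square-integrable martingale, the stochastic Fubini theorem applies and permits interchanging $\int_0^t(\cdot)\,dr$ with $\int_0^r(\cdot)\,dX^k_s$. The order of integration becomes $\int_0^t\big(\int_s^t(t-r)^{\alpha-1}\,dr\big)f^k(s)\,dX^k_s$, and the inner deterministic integral evaluates to $\int_s^t(t-r)^{\alpha-1}\,dr=\alpha^{-1}(t-s)^{\alpha}$. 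Combining this with the prefactor $1/\Gamma(\alpha)$ and using $\alpha\Gamma(\alpha)=\Gamma(1+\alpha)$ produces the coefficient $1/\Gamma(1+\alpha)$ in the stated right-hand side; the finite-$N$ identities pass to the limit because the $l_2$-valued integrands converge in $\mathscr{L}_2(T,l_2)$.

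For (ii), I would use $\partial_t^\alpha=\tfrac{d}{dt}\circ J_t^{1-\alpha}$, which is legitimate here since $g(0)=0$ (so the initial correction term in the definition of the Caputo derivative drops out). Applying the computation of (i) with $\alpha$ replaced by $1-\alpha$ gives $J_t^{1-\alpha}g(t)=\Gamma(2-\alpha)^{-1}\sum_k\int_0^t(t-s)^{1-\alpha}f^k(s)\,dX^k_s$. Differentiating in $t$ under the stochastic integral, the boundary contribution at $s=t$ vanishes because the kernel $(t-s)^{1-\alpha}$ is continuous and equals $0$ there (as $1-\alpha>0$), so only $\partial_t(t-s)^{1-\alpha}=(1-\alpha)(t-s)^{-\alpha}$ survives; the identity $(1-\alpha)/\Gamma(2-\alpha)=1/\Gamma(1-\alpha)$ then yields exactly the claimed expression.

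The main obstacle is the rigorous justification of the two interchange-of-operations steps in a form valid simultaneously for the Wiener driver $X^k=B^k$ and the pure-jump L\'evy driver $X^k=Z^k$: namely the stochastic Fubini interchange with the singular deterministic kernel, and the differentiation under the stochastic integral in (ii). For the latter, the kernel $(t-s)^{-\alpha}$ appearing on the right of (ii) is only square-integrable near $s=t$ when $\alpha<\tfrac12$, so for larger $\alpha$ the derivative and the resulting stochastic integral must be understood through the regularized primitive $J_t^{1-\alpha}g$ (of which it is the time-derivative) rather than as a naive It\^o integral. Establishing the stochastic Fubini hypotheses---predictability together with the second-moment bounds supplied by $\mathscr{L}_2(T,l_2)$ and the Burkholder--Davis--Gundy estimate in Remark~\ref{some remark about yang}(iv)---and controlling the $k$-sum uniformly constitute the technical heart of the argument.
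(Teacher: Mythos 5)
The paper offers no argument of its own for this lemma: its entire proof is the sentence ``The proof follows from \cite{Kim3,Kim5}.'' Your stochastic-Fubini scheme is, in substance, the standard argument used in those references, so you have reconstructed the outsourced proof rather than taken a different route: interchange the deterministic kernel with the stochastic integral (legitimate for both $B^k$ and the square-integrable pure-jump $Z^k$, since $\int_0^t(t-r)^{\alpha-1}\bigl(\mathbb{E}\int_0^r|f(s)|_{l_2}^2\,ds\bigr)^{1/2}dr\lesssim \|f\|_{\mathscr{L}_2(T,l_2)}\,t^{\alpha}<\infty$), evaluate the Beta-type integral, and get (ii) by differentiating $J_t^{1-\alpha}$.

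Two points need attention. First, your computation in (i) yields the kernel $(t-s)^{\alpha}$ --- which is the correct answer --- but the lemma as printed has $(t-s)^{\alpha-1}$. These are incompatible: the printed exponent is a typo, being inconsistent with the coefficient $1/\Gamma(1+\alpha)$ and with part (ii) under the formal substitution $\alpha\mapsto-\alpha$. You assert that your result produces ``the stated right-hand side'' without noting the mismatch; as written you prove a statement different from the printed one, so you should say explicitly that you are proving the corrected formula. Second, the caveat you raise about (ii) is a genuine issue that your write-up does not resolve: for $\alpha\geq\tfrac12$ and $f$ merely in $\mathscr{L}_2(T,l_2)$, the right-hand side of (ii) is not a well-defined $L_2$ stochastic integral for a.e.\ $t$, since $\int_0^t(t-s)^{-2\alpha}|f(s)|_{l_2}^2\,ds=\infty$ whenever $f$ does not vanish near $t$; declaring that it is ``understood through the regularized primitive'' changes the meaning of the identity rather than proving it. For $\alpha<\tfrac12$ your plan does close --- the a.e.\ differentiation is justified by a second application of stochastic Fubini showing that the candidate derivative integrates back to $J_t^{1-\alpha}g$. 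For $\alpha\geq\tfrac12$ the identity requires either additional integrability of $f$ or the weaker, mollified/distributional interpretation used in the paper and the works it cites; your proof, like the printed lemma, is silent on which is intended.
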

	\begin{proof}
		The proof follows from \cite{Kim3,Kim5}.
	\end{proof}
	
	\begin{remark}\label{some usuful remark}
		The following facts hold; for detailed proofs, we refer to \cite{Kim2,Kim3,Kim4,Kim5,Kim6}.
		\begin{enumerate}[\rm(i)]
			\item The conditions $\sigma_{1}<\alpha+\frac{1}{2}$ and $\sigma_{2}<\alpha+\frac{1}{p}$ are necessary.
			\item The mapping $(I-\phi(\Delta))^{\frac{\nu}{2}}$ is an isometric isomorphism from $\mathcal{H}^{\phi,\gamma+2}_{p}(T)$ to $\mathcal{H}^{\phi,\gamma+2-\nu}_{p}(T)$.
			\item For $w\in\mathcal{H}^{\phi,\gamma+2}_{p}(T)$, $\Lambda\geq\max\{\alpha,\sigma_{1},\sigma_{2}\}$ and $\Lambda>\frac{1}{p}$, $J_{t}^{\Lambda-\alpha}w(t)$ has c\`{a}dl\`{a}g sample paths in $H^{\phi,\gamma}_{p}(T)$ and
			\begin{align*}
				&\langle J_{t}^{\Lambda-\alpha}(w(t)-\textbf{I}_{\alpha p>1}w_{0}),\varphi\rangle\\
				&=\langle J_{t}^{\Lambda-\alpha}g(t,\cdot),\varphi\rangle+\sum_{k=1}^{\infty}J_{t}^{\Lambda-\delta_{1}}\int_{0}^{t}\langle h^{k}(s,\cdot),\varphi\rangle\,dB^{k}_{s}\\
				&\quad+\sum_{k=1}^{\infty}J_{t}^{\Lambda-\delta_{2}}\int_{0}^{t}\langle f^{k}(s,\cdot),\varphi\rangle\,dZ^{k}_{s},
			\end{align*}
			and
			\begin{align*}
				\mathbb{E}\sup_{t\leq T}\|J^{\Lambda-\alpha}_{t}w\|_{H^{\phi,\gamma}_{p}}^{p}\leq C\left(\textbf{I}_{\alpha p>1}\mathbb{E}\|w_{0}\|_{H^{\phi,\gamma}_{p}}^{p}+\|g\|_{\mathscr{H}^{\phi,\gamma}_{p}(T)}^{p}+\|h\|_{\mathscr{H}^{\phi,\gamma}_{p}
					(T,l_{2})}^{p}+\|f\|^{p}_{\mathscr{H}^{\phi,\gamma}_{p}(T,l_{2},d_{1})}\right),
			\end{align*}
			where $C$ depends on $T,\alpha,\gamma,\delta_{0},d,d_{1},\Lambda,\sigma_{1},\sigma_{2}$.
			\item For $\theta=\min\{\alpha,1,2(\alpha-\sigma_{1})+1,p(\alpha-\sigma_{2})+2\}$, we have that for almost every $t\leq T$,
			\begin{align*}
				\|w\|_{\mathscr{H}^{\phi,\gamma}_{p}(t)}^{p}\leq C\int_{0}^{t}(t-s)^{\theta-1}\left(\|g\|_{\mathscr{H}^{\phi,\gamma}_{p}(s)}^{p}
				+\|f\|_{\mathscr{H}^{\phi,\gamma}_{p}(s,l_{2})}^{p}+\|h\|_{\mathscr{H}^{\phi,\gamma}_{p}(s,l_{2},d_{1})}^{p}\right)\,ds,
			\end{align*}
			where $C$ depends on $T,\alpha,\gamma,\delta_{0},d,d_{1},\Lambda,\sigma_{1},\sigma_{2}$.
		\end{enumerate}
	\end{remark}
	
	\subsection{Some estimates and Lemmas}
	\begin{lemma}\label{solution formula Lemma}
		For $\alpha\in(0,1)$, $\sigma_{2}<\alpha+\frac{1}{p}$, and $f\in \mathscr{H}^{\infty}_{0}(T,l_{2},d_{1})$,
		define the function
		\begin{align}\label{solution formula}
			w(t,x)=\sum_{k=1}^{\infty}\int_{0}^{t}\int_{\mathbb{R}^{d}}\mathcal{S}_{\alpha,\sigma_{2}}(t-s,x-y)f^{k}(s,y)\,dydZ^{k}_{s},
		\end{align}
		then $w\in\mathcal{H}^{\phi,2}_{p}(T)$ and satisfies the equation
		\begin{align}\label{model equation}
			\partial^{\alpha}_{t}w=\phi(\Delta)w+\sum_{k=1}^{\infty}\partial^{\sigma_{2}}_{t}\int_{0}^{t}f^{k}(s,x)\,dZ_{s}^{k},\quad w(0)=0.
		\end{align}
		Moreover, the results in \eqref{solution formula} and \eqref{model equation} also hold when $f$ and $Z^{k}_{s}$ are replaced by $g\in\mathscr{H}^{\infty}_{0}(T,l_{2})$ and $B^{k}_{t}$, respectively.
	\end{lemma}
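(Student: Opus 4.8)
The plan is to verify the integrated (distributional) identity demanded by Definition~\ref{solution define}, read off for \eqref{model equation} with free data $(g,h,f)=(\phi(\Delta)w,0,f)$, by passing to the spatial Fourier transform. Under $\mathcal F$ the operator $\phi(\Delta)$ becomes multiplication by $-\phi(|\xi|^2)$ and the whole stochastic identity collapses to a single scalar fractional-integral identity for the Mittag--Leffler function; the stochastic integral is transported through the deterministic operations $J_t^\alpha$, $J_t^{\alpha-\sigma_2}$ and $\mathcal F$ by a stochastic Fubini argument.

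\textbf{Well-posedness.} Because $f\in\mathscr H^\infty_0(T,l_2,d_1)$ is spatially smooth with good decay, I would first commute $\phi(\Delta)$ (and $(I-\phi(\Delta))$) through the spatial convolution, writing $\phi(\Delta)\bigl(\mathcal S_{\alpha,\sigma_2}(t-s)\star f^k(s)\bigr)=\mathcal S_{\alpha,\sigma_2}(t-s)\star\phi(\Delta)f^k(s)$; since $\phi(\Delta)f^k$ again lies in $\mathscr H^\infty_0$, the only kernel left to estimate is $\mathcal S_{\alpha,\sigma_2}$ itself. The Burkholder--Davis--Gundy bound of Remark~\ref{some remark about yang}(iv), together with the $L_1$-kernel bound $\|\mathcal S_{\alpha,\sigma_2}(t)\|_{L_1}\lesssim t^{\alpha-\sigma_2}$ from \eqref{L1 estimate}, then dominates $\mathbb E\|w(t)\|_{H^{\phi,2}_p}^p$ by a temporal convolution whose singularity $(t-s)^{\alpha-\sigma_2}$ is $L_p$-integrable on $[0,T]$ precisely because $\sigma_2<\alpha+\tfrac1p$. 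The same estimate shows $\|w(t)\|_{H^{\phi,2}_p}\to0$ as $t\downarrow0$, giving $w(0)=0$ and $w\in\mathcal H^{\phi,2}_p(T)$.

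\textbf{Verification of the equation.} Taking the spatial Fourier transform of \eqref{solution formula} and moving $\mathcal F$ inside the stochastic integral by stochastic Fubini, I obtain, using \eqref{Fourier transform 1},
\[
\mathcal Fw(t,\xi)=\sum_{k}\int_0^t e(t-s,\xi)\,\mathcal Ff^k(s,\xi)\,dZ^k_s,\qquad e(u,\xi):=u^{\alpha-\sigma_2}E_{\alpha,1-\sigma_2+\alpha}\bigl(-u^\alpha\phi(|\xi|^2)\bigr).
\]
Setting $\lambda=\phi(|\xi|^2)$ and applying $J_t^\alpha$ to $-\lambda\mathcal Fw$ and $J_t^{\alpha-\sigma_2}$ to the source on the Fourier side of the claimed identity, a second stochastic Fubini interchange (commuting the deterministic temporal convolutions $J_t^{\alpha}$, $J_t^{\alpha-\sigma_2}$ past $dZ^k_s$, as permitted by the integral lemma preceding this statement) reduces everything to the pointwise identity
\[
e(u,\xi)+\lambda\,(J^\alpha e)(u,\xi)=\frac{u^{\alpha-\sigma_2}}{\Gamma(1+\alpha-\sigma_2)},\qquad u>0.
\]
This is exactly the Mittag--Leffler recurrence $E_{\alpha,\beta}(z)=\Gamma(\beta)^{-1}+zE_{\alpha,\alpha+\beta}(z)$ with $\beta=1+\alpha-\sigma_2$, combined with the fractional-integration rule $J^\alpha\bigl[u^{\beta-1}E_{\alpha,\beta}(-\lambda u^\alpha)\bigr]=u^{\alpha+\beta-1}E_{\alpha,\alpha+\beta}(-\lambda u^\alpha)$. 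Inverting $\mathcal F$ and pairing against $\varphi\in\mathcal S$ returns the distributional form of \eqref{model equation}.

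\textbf{The Wiener case and the main obstacle.} When $Z^k$ and $f$ are replaced by $B^k$ and $g\in\mathscr H^\infty_0(T,l_2)$, the argument is identical with $\sigma_1$ in place of $\sigma_2$: the scalar Mittag--Leffler identity is unchanged, while in the well-posedness step the Burkholder--Davis--Gundy inequality for continuous martingales produces only the quadratic-variation term, so the admissible exponent relaxes to $\sigma_1<\alpha+\tfrac12$. I expect the genuine difficulty to be the rigorous justification of the two stochastic Fubini interchanges for the jump integral---establishing the joint measurability and the $L_p(\Omega)$ integrability needed to commute $dZ^k_s$ past both the temporal fractional integrals and the Fourier transform, and to do so uniformly enough to sum the series over $k$. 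This is precisely where the moment hypothesis $M_p<\infty$ and the integrability of $(t-s)^{\alpha-\sigma_2}$ forced by $\sigma_2<\alpha+\tfrac1p$ are consumed.
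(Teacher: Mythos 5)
Your proposal is correct and takes essentially the same route as the paper, whose proof simply defers to \cite[Lemma 4.2]{Kim6} and \cite[Lemma 3.10]{Chen 1}: those arguments are precisely your Fourier-side Mittag--Leffler verification via \eqref{Fourier transform 1}--\eqref{Fourier transform 2} combined with stochastic Fubini and the Burkholder--Davis--Gundy bounds, with $B^{k}_{t}$ replaced by $Z^{k}_{t}$. One cosmetic point: $\phi(\Delta)f^{k}$ is not literally in $\mathscr{H}^{\infty}_{0}$ (nonlocality destroys compact support), but it retains all the smoothness and integrability your estimates actually use, so the argument stands.
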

	\begin{proof}
		The proof follows a similar approach to that in Kim \cite[Lemma 4.2]{Kim6} and Chen \cite[Lemma 3.10]{Chen 1}. The key distinctions lie in the application of \eqref{Fourier transform 1}, \eqref{Fourier transform 2}, and \cite[Lemma 4.1]{Kim2}, along with the substitution of the Wiener process $w_{t}^{k}$ by $Z^{k}_{t}$.
	\end{proof}
	We define the constant $\tilde{\delta}_{0},\tilde{\delta}_{1}>0$ that is
	$$
	\tilde{\delta}_{0}=2-(2\sigma_{1}-1)/\alpha \text{ and } \tilde{\delta}_{1}=2-(2\sigma_{2}-2/p)/\alpha.
	$$
	For any $(t_{0},x_{0})\in\mathbb{R}^{d+1}$ and constant $\varrho>0$, we denote
	\begin{align*}
		\lambda(\varrho)=\left(\phi(\varrho^{-2})\right)^{-\frac{1}{\alpha}},\text{ }B_{\varrho}(x_{0})=\left\{z:|x_{0}-z|<\varrho\right\},
	\end{align*}
	and
	\begin{align*}
		I_{\varrho}(t_{0})=\left(t_{0}-\lambda(\varrho),t_{0}\right),\text{ }\mathcal{Q}_{\varrho}(t_{0},x_{0})=I_{\varrho}(t_{0})\times B_{\varrho}(x_{0}),\text{ }\mathcal{Q}_{\varrho}:=\mathcal{Q}_{\varrho}(0,0).
	\end{align*}
	For the one dimension Brownian motion $B^{k}_{t}$, and define the solution
	$$
	w(t,x)=\sum_{k=1}^{\infty}\int_{0}^{t}\int_{\mathbb{R}^{d}}\mathcal{S}_{\alpha,\sigma_{1}}(t-s,x-y)h^{k}(s,y)\,dydB^{k}_{s},
	$$By using Burkholder-Davis-Gundy inequality, we derive that
	\begin{align*}
		\big\|\phi(\Delta)^{\frac{\tilde{\delta}_{0}}{2}}w\big\|_{\mathscr{L}_{p}(T)}
		&\lesssim\mathbb{E}\left\|\left(\int_{0}^{t}\sum_{k=1}^{\infty}
		\bigg|\int_{\mathbb{R}^{d}}
		\mathcal{S}^{\frac{\tilde{\delta}_{0}}{2}}_{\alpha,\sigma_{1}}(t-s,x-y)h^{k}(s,y)\,dy\bigg|^{2}\,ds\right)^{\frac{1}{2}}
		\right\|_{L_{p}((0,T)\times\mathbb{R}^{d})}\\
		&\lesssim\mathbb{E}\left\|\left(\int_{0}^{t}\big|\mathcal{S}_{\alpha,\sigma_{1}}^{\frac{\tilde{\delta}_{0}}{2}}(t-s)\star h\big|_{l_{2}}^{2}\,ds\right)^{\frac{1}{2}}\right\|_{L_{p}((0,T)\times\mathbb{R}^{d})}
	\end{align*}
	Denote $H=l_{2}$ and we can define the sublinear operator for $h\in C^{\infty}_{c}(\mathbb{R}^{d+1};H)$,
	\begin{align*}
		\mathbb{S}h(t,x)=\bigg(\int_{-\infty}^{t}\big|\mathcal{S}^{\frac{\tilde{\delta}_{0}}{2}}_{\alpha,\sigma_{1}}(t-s)\star h\big|_{H}^{2}\,ds\bigg)^{\frac{1}{2}}.
	\end{align*}
	\begin{lemma}\label{strong type p}
		For $p\geq 2$, $T\leq\infty$, $h\in L_{p}(\mathbb{R}^{d+1};H)$, we have
		\begin{align*}
			\int_{-\infty}^{T}\int_{\mathbb{R}^{d}}\left|\mathbb{S}h(t,x)\right|^{p}\,dxdt\lesssim \int_{-\infty}^{T}\int_{\mathbb{R}^{d}}\left|h(t,x)\right|_{H}^{p}\,dxdt,
		\end{align*}
		where the constant depends on $\alpha,\gamma,\delta_{0},d,\sigma_{1}$.
	\end{lemma}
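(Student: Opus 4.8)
The plan is to prove the strong $(p,p)$ bound for the sublinear operator $\mathbb{S}$ by combining an $L_2$ estimate with a pointwise sharp-maximal-function estimate, in the spirit of Krylov's method but routed through the Fefferman--Stein inequality. Concretely, I would reduce the claim to two ingredients: (a) $\mathbb{S}$ is bounded from $L_2(\mathbb{R}^{d+1};H)$ to $L_2(\mathbb{R}^{d+1})$; and (b) the sharp function $(\mathbb{S}h)^{\#}$ is controlled pointwise by $(\mathbb{M}|h|_H^2)^{1/2}$, where $\mathbb{M}$ denotes the Hardy--Littlewood maximal operator over the cylinders $\mathcal{Q}_\varrho(t_0,x_0)$ and $g^{\#}$ the associated sharp function. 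Granting (a) and (b), for $p>2$ the Fefferman--Stein theorem gives $\|\mathbb{S}h\|_{L_p}\lesssim\|(\mathbb{S}h)^{\#}\|_{L_p}$, and since $\mathbb{M}$ is bounded on $L_{p/2}$ (by Marcinkiewicz interpolation between weak $(1,1)$ and $L_\infty$) one obtains $\|\mathbb{S}h\|_{L_p}\lesssim\|(\mathbb{M}|h|_H^2)^{1/2}\|_{L_p}=\|\mathbb{M}|h|_H^2\|_{L_{p/2}}^{1/2}\lesssim\|\,|h|_H^2\,\|_{L_{p/2}}^{1/2}=\|h\|_{L_p(H)}$, while the endpoint $p=2$ is ingredient (a) itself. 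A density argument with $h\in C_c^\infty(\mathbb{R}^{d+1};H)$ supplies the a priori finiteness needed to invoke Fefferman--Stein.

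For ingredient (a) I would pass to the spatial Fourier side. By Plancherel and Fubini, $\int_{\mathbb{R}^d}|\mathbb{S}h(t,x)|^2\,dx=\int_{-\infty}^{t}\int_{\mathbb{R}^d}|\mathcal{F}\mathcal{S}^{\frac{\tilde{\delta}_0}{2}}_{\alpha,\sigma_1}(t-s,\xi)|^2\,|\mathcal{F}h(s,\xi)|_H^2\,d\xi\,ds$, so it suffices to bound $\int_0^\infty|\mathcal{F}\mathcal{S}^{\frac{\tilde{\delta}_0}{2}}_{\alpha,\sigma_1}(r,\xi)|^2\,dr$ uniformly in $\xi$. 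Using the representation \eqref{Fourier transform 2} together with the decay $E_{\alpha,\beta}(-x)\lesssim(1+x)^{-1}$ that follows from \eqref{asympotic behavior}, the substitution $u=r^\alpha\phi(|\xi|^2)$ turns this integral into $\phi(|\xi|^2)^{\tilde{\delta}_0-2+(2\sigma_1-1)/\alpha}$ times a purely numerical integral; the exponent of $\phi(|\xi|^2)$ vanishes by the very definition $\tilde{\delta}_0=2-(2\sigma_1-1)/\alpha$, while the numerical integral converges precisely because $\sigma_1<\alpha+\tfrac12$ (integrability at $u=0$) and because the $u^{-2}$ tail of $E^2$ secures integrability at $u=\infty$. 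This gives the $L_2$ bound with no dependence on $\xi$.

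For ingredient (b) I would fix a cylinder $\mathcal{Q}=\mathcal{Q}_\varrho(t_0,x_0)$ and split $h=h_1+h_2$ with $h_1=h\,\textbf{I}_{\mathcal{Q}^{*}}$ the restriction to a fixed dilate $\mathcal{Q}^{*}$ of $\mathcal{Q}$ and $h_2=h-h_1$. For the local part, sublinearity, Cauchy--Schwarz, and ingredient (a) give $\fint_{\mathcal{Q}}|\mathbb{S}h_1|\le(\fint_{\mathcal{Q}}|\mathbb{S}h_1|^2)^{1/2}\lesssim|\mathcal{Q}|^{-1/2}\|h_1\|_{L_2(H)}=(\fint_{\mathcal{Q}^{*}}|h|_H^2)^{1/2}\lesssim(\mathbb{M}|h|_H^2(t_0,x_0))^{1/2}$. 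For the nonlocal part I would estimate the oscillation $|\mathbb{S}h_2(t,x)-\mathbb{S}h_2(t',x')|$ for $(t,x),(t',x')\in\mathcal{Q}$ by the triangle inequality in $L_2((-\infty,t);H)$ together with the pointwise kernel bounds of Lemma \ref{Some proposition of S}, namely \eqref{daoshu estimate 2}, \eqref{daoshu estimate 4}, and the $L_1$ bound \eqref{L1 estimate}. The choice of time radius $\lambda(\varrho)=(\phi(\varrho^{-2}))^{-1/\alpha}$ is exactly what makes the kernel self-similar across the two scales, and the lower scaling condition \eqref{lower scailing condition} is what allows the resulting dyadic sum over the spatial--temporal shells surrounding $\mathcal{Q}$ to converge, again producing a bound by $(\mathbb{M}|h|_H^2(t_0,x_0))^{1/2}$.

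The hard part will be this far-field oscillation estimate in ingredient (b): one must differentiate $\mathcal{S}^{\frac{\tilde{\delta}_0}{2}}_{\alpha,\sigma_1}(t-s,\cdot)$ in $(t,x)$, insert the gradient and Hölder bounds coming from \eqref{daoshu estimate 2}--\eqref{daoshu estimate 4}, take the $H$-norm and the $L_2$-norm in $s$ over $(-\infty,t)$, and then sum the contributions of the annuli $\{2^j\varrho\le|y-x_0|<2^{j+1}\varrho\}$ together with the matching time shells, exhibiting geometric decay in $j$. Controlling simultaneously the singular behaviour near the diagonal $s\uparrow t$ and the tails, uniformly under the scaling $\lambda(\varrho)$, is the delicate point, and it is precisely here that the lower scaling exponent $\kappa_0$ and the integrability estimate \eqref{Pro.Convergence} enter.
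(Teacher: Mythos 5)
Your outline is sound and would prove the lemma, but it is not the paper's argument: it is essentially the original proof of Kim \cite{Kim3} that this paper explicitly sets out to replace (see the remark immediately after Lemma \ref{strong type p} and the discussion in the introduction). Your ingredient (b) --- the pointwise domination $(\mathbb{S}h)^{\#}\lesssim\big(\mathbb{M}|h|_{H}^{2}\big)^{1/2}$, fed into Fefferman--Stein together with the $L_{p/2}$-boundedness of the Hardy--Littlewood maximal operator --- is exactly the Krylov/Kim route. The paper's proof keeps Fefferman--Stein and the strong $(2,2)$ bound (which it quotes from \cite[Lemma 3.5]{Kim3} rather than re-deriving by Plancherel as you do), but replaces your pointwise maximal-function bound by the endpoint estimate of Proposition \ref{dengjiaxingzhi}: for every cylinder, $\fint_{Q_{\varrho}}\big|\mathbb{S}h-(\mathbb{S}h)_{Q_{\varrho}}\big|\lesssim\|h\|_{L_{\infty}(\mathbb{R}^{d+1};H)}$, i.e.\ $\mathbb{S}$ maps $L_{\infty}$ into BMO. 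This makes the sublinear map $h\mapsto(\mathbb{S}h)^{\#}$ bounded both on $L_{2}$ and on $L_{\infty}$, and Marcinkiewicz interpolation then yields $\|(\mathbb{S}h)^{\#}\|_{L_{p}}\lesssim\|h\|_{L_{p}}$ for all $2<p<\infty$, after which Fefferman--Stein concludes. The practical difference is in the far-field work that you correctly flag as the hard part: in the paper's route each of the four pieces $h_{1},\dots,h_{4}$ only needs to be bounded by $\|h\|_{L_{\infty}}$ times convergent kernel integrals, with no bookkeeping of which cylinders the averages of $|h|_{H}^{2}$ live on; in your route the same decomposition must produce averages of $|h|_{H}^{2}$ over cylinders near $(t_{0},x_{0})$ with geometric decay in the annulus index, which is strictly more delicate. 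What your stronger pointwise bound would buy in exchange is that it immediately implies the paper's $L_{\infty}$--BMO estimate and would also give weighted versions; and your self-contained Plancherel proof of the $(2,2)$ bound (via \eqref{Fourier transform 2}, Mittag-Leffler decay, and the cancellation $\tilde{\delta}_{0}-2+(2\sigma_{1}-1)/\alpha=0$, with convergence coming from $1/2<\sigma_{1}<\alpha+1/2$) is a genuine addition the paper does not contain.
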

	
	\begin{remark}
		Kim \cite{Kim3} proved Lemma \ref{strong type p} by controlling $\left|\mathbb{S}h(t,x)\right|^{2}$ via the Hardy-Littlewood maximal function $\mathcal{M}_{t}\mathcal{M}_{x}|h|^{2}_{H}(t,x)$. Here we provide an alternative proof using Marcinkiewicz interpolation and the Fefferman-Stein theorem.
	\end{remark}
	
	\begin{proof}
		Without loss of generality, we only verify the case $T=\infty$. Indeed, for $T<\infty$, we can take $\xi(t)\in C^{\infty}(\mathbb{R})$ such that $\xi(t)=1$ for $t\leq T$ and $\xi(t)=0$ for $t\geq T+\varepsilon$ for any $\varepsilon>0$. Then we replace $h$ by $\xi h$.
		
		The case $p=2$ follows from \cite[Lemma 3.5]{Kim3}. Therefore, we only need to prove the case $p>2$.
		First, note that $h\in C^{\infty}_{c}(\mathbb{R}^{d+1};H)$ is dense in $L_{p}(\mathbb{R}^{d+1};H)$, so we only consider $h\in C^{\infty}_{c}(\mathbb{R}^{d+1};H)$ and claim the following proposition:
		
		\begin{proposition}\label{dengjiaxingzhi}
			For any $(t_{0},x_{0})\in\mathbb{R}^{d+1}$, $\varrho>0$, and $(t,x)\in Q_{\varrho}(t_{0},x_{0})$, we have
			\begin{align}\label{BMO estimate}
				\fint_{Q_{\varrho}(t_{0},x_{0})}\left|\mathbb{S}h(t,x)-\big(\mathbb{S}h\big)_{Q_{\varrho}(t_{0},x_{0})}\right|\,dxdt\leq C \big\|h\big\|_{L_{\infty}(\mathbb{R}^{d+1};H)},
			\end{align}
			where the constant $C$ is independent of $T$.
		\end{proposition}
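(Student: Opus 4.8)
The plan is to prove \eqref{BMO estimate} by the standard local/nonlocal splitting adapted to the parabolic scaling $\lambda(\varrho)=(\phi(\varrho^{-2}))^{-1/\alpha}$ that defines the cylinders $Q_\varrho$. Fix $(t_0,x_0)$ and $\varrho>0$, abbreviate $Q=Q_\varrho(t_0,x_0)$, and set $Q^\ast=Q_{2\varrho}(t_0,x_0)$. Since $\phi$ is increasing one has $Q\subset Q^\ast$, and the lower scaling condition \eqref{lower scailing condition} gives $\lambda(2\varrho)\lesssim\lambda(\varrho)$, hence $|Q^\ast|\lesssim|Q|\approx\varrho^{d}\lambda(\varrho)$. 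Decompose $h=h_1+h_2$ with $h_1=h\,\mathbf{I}_{Q^\ast}$ and $h_2=h-h_1$. Because $\fint_Q|\mathbb{S}h-(\mathbb{S}h)_Q|\le 2\fint_Q|\mathbb{S}h-c|$ for every constant $c$, and because $\mathbb{S}$ is sublinear (the convolution is linear in $h$, so the triangle inequality in $L_2(\mathbb{R};H)$ yields $|\mathbb{S}h-\mathbb{S}h_2|\le\mathbb{S}h_1$), it suffices to bound $\fint_Q\mathbb{S}h_1$ and $\fint_Q|\mathbb{S}h_2-c|$ by $\|h\|_{L_\infty(\mathbb{R}^{d+1};H)}$, where $c:=\mathbb{S}h_2(\hat t,\hat x)$ for the fixed reference vertex $(\hat t,\hat x)=(t_0-\lambda(\varrho),x_0)$.

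For the local part I would invoke Hölder's inequality and the strong $(2,2)$ bound for $\mathbb{S}$, i.e.\ the case $p=2$ already contained in Kim \cite[Lemma 3.5]{Kim3}:
\[
\fint_Q\mathbb{S}h_1\,dx\,dt\le\Big(\fint_Q|\mathbb{S}h_1|^2\,dx\,dt\Big)^{1/2}\le|Q|^{-1/2}\|\mathbb{S}h_1\|_{L_2(\mathbb{R}^{d+1})}\lesssim|Q|^{-1/2}\|h_1\|_{L_2(\mathbb{R}^{d+1};H)}.
\]
Since $\|h_1\|_{L_2(\mathbb{R}^{d+1};H)}\le|Q^\ast|^{1/2}\|h\|_{L_\infty(\mathbb{R}^{d+1};H)}$ and $|Q^\ast|\lesssim|Q|$, this is $\lesssim\|h\|_{L_\infty(\mathbb{R}^{d+1};H)}$, as needed.

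The nonlocal part is the crux. Writing $F_{t,x}(s)=\mathbf{I}_{s<t}\big(\mathcal{S}^{\tilde\delta_0/2}_{\alpha,\sigma_1}(t-s)\star h_2\big)(x)$, so that $\mathbb{S}h_2(t,x)=\|F_{t,x}\|_{L_2(\mathbb{R};H)}$, another application of the triangle inequality reduces the task to showing
\[
\Big(\int_{\mathbb{R}}\big|F_{t,x}(s)-F_{\hat t,\hat x}(s)\big|_H^2\,ds\Big)^{1/2}\lesssim\|h\|_{L_\infty(\mathbb{R}^{d+1};H)}\qquad\text{for all }(t,x)\in Q.
\]
I would treat three contributions. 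First, the cutoff-mismatch region $s\in[\hat t,t)$, on which $F_{\hat t,\hat x}(s)=0$: here $t-s<\lambda(\varrho)$ and the $L_1$ bound \eqref{L1 estimate} gives $|F_{t,x}(s)|_H\lesssim\|h\|_\infty(t-s)^{\alpha(1-\tilde\delta_0/2)-\sigma_1}$, whose square is integrable over an interval of length $\lambda(\varrho)$ and yields the correct size after the $\varrho$-scaling. Second, the spatial increment for $s<\hat t$, obtained by replacing $x$ with $x_0$ at fixed time: since $h_2$ is supported in $\{|y-x_0|\ge2\varrho\}$ while $|x-x_0|<\varrho$, the mean value theorem and the gradient kernel estimate \eqref{daoshu estimate 2} (with $k=1$, $\zeta=\tilde\delta_0/2$, $\beta=\sigma_1$) bound the integrand by $\varrho\,\|h\|_\infty\,(t-s)^{\alpha-\sigma_1}\int_{|z|\ge\varrho}\phi(|z|^{-2})^{\tilde\delta_0/2}|z|^{-d-1}\,dz$. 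Third, the temporal increment for $s<\hat t$, obtained by replacing $t$ with $\hat t$ at fixed $x_0$, handled through $\partial_t\mathcal{S}^{\zeta}_{\alpha,\sigma_1}=\mathcal{S}^{\zeta}_{\alpha,\sigma_1+1}$ and \eqref{daoshu estimate 2} with $\beta=\sigma_1+1$.

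The main obstacle is the convergence and correct $\varrho$-homogeneity of the $s$-integral in the spatial and temporal increments: the pointwise factor $(t-s)^{\alpha-\sigma_1}$ is not square-integrable over $s\in(-\infty,\hat t)$ on its own. The remedy is to split this range at $t-s=\lambda(\varrho)$, using the universal bound \eqref{daoshu estimate 2} on the near part $t-s\le\lambda(\varrho)$ and the sharper diffusive-regime bound \eqref{daoshu estimate 4} (valid precisely when $(t-s)^\alpha\phi(|z|^{-2})\ge1$) on the far part $t-s>\lambda(\varrho)$; the radial integrals $\int_{|z|\ge\varrho}\phi(|z|^{-2})^{\tilde\delta_0/2}|z|^{-d-1}\,dz$ are then summed via the lower scaling condition and \eqref{Pro.Convergence}. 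The choice $\lambda(\varrho)=(\phi(\varrho^{-2}))^{-1/\alpha}$ is exactly tuned so that the two regimes balance and all $\varrho$-powers cancel, leaving an $O(\|h\|_\infty^2)$ bound. Combining the three contributions gives $|\mathbb{S}h_2-c|\lesssim\|h\|_\infty$ pointwise on $Q$, and together with the local estimate this establishes \eqref{BMO estimate}.
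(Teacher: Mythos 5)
There are two genuine gaps, and both trace back to the fact that your two-piece decomposition $h=h\,\mathbf{I}_{Q^{*}}+h_{2}$, $Q^{*}=Q_{2\varrho}(t_{0},x_{0})$, is too coarse compared with what the estimates can actually support. First, your bound in the cutoff-mismatch region $s\in[\hat t,t)$ is false as stated: with $\zeta=\tilde{\delta}_{0}/2$ and $\beta=\sigma_{1}$, the $L_{1}$ bound \eqref{L1 estimate} gives the exponent $\alpha(1-\tilde{\delta}_{0}/2)-\sigma_{1}=-\tfrac{1}{2}$ exactly (this identity is forced by the definition of $\tilde{\delta}_{0}$), so $|F_{t,x}(s)|_{H}\lesssim\|h\|_{\infty}(t-s)^{-1/2}$ and its square $(t-s)^{-1}$ is \emph{not} integrable up to $s=t$; the integral diverges logarithmically. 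The repair must use spatial, not temporal, information: for $s\in[\hat t,t)\subset(t_{0}-\lambda(2\varrho),t_{0})$ your $h_{2}(s,\cdot)$ vanishes on $B_{2\varrho}(x_{0})$, so the kernel tail estimate coming from \eqref{daoshu estimate 2} gives $|F_{t,x}(s)|_{H}\lesssim\|h\|_{\infty}(t-s)^{\alpha-\sigma_{1}}(\phi(\varrho^{-2}))^{\tilde{\delta}_{0}/2}$, and since $2(\alpha-\sigma_{1})>-1$ (i.e. $\sigma_{1}<\alpha+\tfrac12$) the square is integrable with exactly the right $\varrho$-homogeneity. This is precisely how the paper treats its piece $h_{12}$ in Step 1.

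Second, and more seriously, your spatial-increment step rests on the claim that ``$h_{2}$ is supported in $\{|y-x_{0}|\ge 2\varrho\}$'', which is not true: $h_{2}=h-h\,\mathbf{I}_{Q^{*}}$ coincides with $h$ on the entire half-space $\{s<t_{0}-\lambda(2\varrho)\}$, in particular at points arbitrarily close to $x_{0}$ in space. For this far-past/near-space portion the mean-value-theorem argument collapses, because the gradient bound \eqref{daoshu estimate 2} gives $|\nabla\mathcal{S}^{\tilde{\delta}_{0}/2}_{\alpha,\sigma_{1}}(\tau,z)|\lesssim\tau^{\alpha-\sigma_{1}}\phi(|z|^{-2})^{\tilde{\delta}_{0}/2}|z|^{-d-1}$, which is not locally integrable at $z=0$; hence $\int_{\mathbb{R}^{d}}|\nabla\mathcal{S}^{\tilde{\delta}_{0}/2}_{\alpha,\sigma_{1}}(\tau,z)|\,dz=\infty$ and no increment bound comes out. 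Your ``remedy'' of splitting the $s$-integral at $t-s=\lambda(\varrho)$ cannot fix this, since the obstruction is the spatial singularity of the gradient kernel, not time integrability. This is exactly why the paper uses a four-fold decomposition: the far-past part is split again into a far-space piece $h_{3}$ (for which your MVT argument is essentially the paper's Step 3) and a near-space piece $h_{4}$ supported in $(-\infty,-2\lambda(\varrho))\times B_{3\varrho}$, and for $h_{4}$ one does not estimate an oscillation at all; instead $|\mathbb{S}h_{4}(t,x)|$ itself is bounded by $\|h\|_{\infty}$, using the diffusive-regime estimates \eqref{daoshu estimate 3}--\eqref{daoshu estimate 4} integrated over $B_{4\varrho}$ and over times $\ge\lambda(\varrho)$ (the computations \eqref{Min 1}--\eqref{Min 3}, which rely on the algebraic identities $\tilde{\delta}_{0}-2+(2\sigma_{1}-1)/\alpha=0$ and $\alpha(1-\tilde{\delta}_{0}/2)-\sigma_{1}=-\tfrac12$). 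Your proposal contains no substitute for this step, so as written it does not prove the proposition.
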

		
		\begin{proof}
			By change of variables, note that
			$$
			\fint_{Q_{\varrho}(t_{0},x_{0})}\left|\mathbb{S}h(t,x)-\big(\mathbb{S}h\big)_{Q_{\varrho}(t_{0},x_{0})}\right|\,dxdt
			=\fint_{Q_{\varrho}}\left|\mathbb{S}\tilde{h}(t,x)-\big(\mathbb{S}\tilde{h}\big)_{Q_{\varrho}}\right|\,dxdt,
			$$
			where $\tilde{h}(t,x)=h(t+t_{0},x+x_{0})$. Thus, without loss of generality, we only verify \eqref{BMO estimate} for $Q_{\varrho}$.
			We claim that for any $(t,x),(s,y)\in Q_{\varrho}$,
			\begin{align}\label{transform inequality}
				\fint_{Q_{\rho}}\fint_{Q_{\rho}}\big|\mathbb{S}h(t,x)-\mathbb{S}h(s,y)\big|\,dxdtdsdy\lesssim \big\|h\big\|_{L_{\infty}(\mathbb{R}^{d+1};H)}.
			\end{align}
			Let $\zeta\in C^{\infty}\left(\mathbb{R}^{d}\right)$ and $\eta\in C^{\infty}\left(\mathbb{R}\right)$ be cutoff functions satisfying $0\leq \zeta\leq 1$, $0\leq\eta\leq 1$ with
			\begin{align*}
				\zeta=
				\begin{cases}
					1 & \text{on } B_{\frac{7\varrho}{3}}\\
					0 & \text{on } B^{c}_{\frac{8\varrho}{3}}
				\end{cases},\quad
				\eta=
				\begin{cases}
					1 & \text{on } (\frac{-7\lambda(\varrho)}{3},\infty)\\
					0 & \text{on } (-\infty,\frac{-8\lambda(\varrho)}{3})
				\end{cases}.
			\end{align*}
			Thus, we have
			\begin{align*}
				\big|\mathbb{S}h(t,x)-\mathbb{S}h(s,y)\big|&\leq\big|\mathbb{S}h_{1}(t,x)-\mathbb{S}h_{1}(s,y)\big|
				+\big|\mathbb{S}h_{2}(t,x)-\mathbb{S}h_{2}(s,x)\big|\\
				&\quad +\big|\mathbb{S}h_{3}(s,x)-\mathbb{S}h_{3}(s,y)\big|
				+\big|\mathbb{S}h_{4}(s,x)-\mathbb{S}h_{4}(s,y)\big|,
			\end{align*}
			where
			$h_{1}=h\eta$ is supported in $\big(-3\lambda(\varrho),\infty\big)\times\mathbb{R}^{d}$, $h_{2}=h(1-\eta)$ is supported in $\big(-\infty,-2\lambda(\varrho)\big)\times\mathbb{R}^{d}$,
			$h_{3}=h(1-\eta)(1-\zeta)$ is supported in $\big(-\infty,-2\lambda(\varrho)\big)\times B^{c}_{2\varrho}$, and
			$h_{4}=h(1-\eta)\zeta$ is supported in $\big(-\infty,-2\lambda(\varrho)\big)\times B_{3\varrho}$.
			
			$\bullet$ Step 1: Estimate of $\fint_{Q_{\varrho}}\fint_{Q_{\varrho}}\big|\mathbb{S}h_{1}(t,x)-\mathbb{S}h_{1}(s,y)\big|\,dxdtdsdy$.
			
			Let $\xi\in C^{\infty}(\mathbb{R})$ such that $0\leq\xi\leq 1$, $\xi(t)=1$ on $|t|\leq 2\lambda(\varrho)$ and $\xi(t)=0$ for $|t|\geq5\lambda(\varrho)/2$. Note that $\mathbb{S}(h_{1}\xi)=\mathbb{S}h_{1}$ on $Q_{\varrho}$, and $|h_{1}\xi|\leq h_{1}$, we can assume $h_{1}(t,x)=0$ for $|t|\geq 3\lambda(\varrho)$. Moreover, let $\xi_{1}\in C^{\infty}(\mathbb{R}^{d})$ such that $\xi_{1}=1$ on $B_{5\varrho/2}$ and $\xi_{1}=0$ on $B^{c}_{7\varrho/3}$, hence we derive
			$$
			\fint_{Q_{\varrho}}|h_{1}(t,x)|\,dxdt\leq\fint_{Q_{\varrho}}|\mathbb{S}(h_{11})(t,x)|\,dxdt
			+\fint_{Q_{\varrho}}|\mathbb{S}(h_{12})(t,x)|\,dxdt,
			$$
			where $h_{11}=h_{1}\xi_{1}$ is supported in $(-3\lambda(\varrho),3\lambda(\varrho))\times B_{2\varrho}$, and $h_{12}=h_{1}(1-\xi_{1})$ is supported in $(-3\lambda(\varrho),3\lambda(\varrho))\times B^{c}_{2\varrho}$.
			Note that the operator $\mathbb{S}$ is strong type (2,2), and
			$$
			\int_{Q_{\varrho}}\big|\mathbb{S}h_{11}(t,x)\big|\,dxdt\leq|Q_{\varrho}|^{\frac{1}{2}}
			\bigg(\int_{Q_{\varrho}}\big|\mathbb{S}h_{11}(t,x)\big|^{2}\,dxdt\bigg)^{\frac{1}{2}}
			\leq|Q_{\varrho}|\|h\|_{L_{\infty}(\mathbb{R}^{d+1};H)}.
			$$
			Moreover, combining Lemma \ref{Some proposition of S} and noting that
			\begin{align*}
				&\bigg|\int_{\mathbb{R}^{d}}\mathcal{S}_{\alpha,\sigma_{1}}^{\frac{\tilde{\delta}_{0}}{2}}(t-\tau,z)h_{12}(\tau,x-z)\,dz\bigg|_{H}^{2}\\
				&\lesssim \big\|h\big\|^{2}_{L_{\infty}(\mathbb{R}^{d+1};H)}\textbf{I}_{|\tau|\leq 3\lambda(\varrho)}\bigg(\int_{|z|\geq \varrho}\big|\mathcal{S}_{\alpha,\sigma_{1}}^{\frac{\tilde{\delta}_{0}}{2}}(t-\tau,z)\big|
				\,dz\bigg)^{2}\\
				&\lesssim\big\|h\big\|^{2}_{L_{\infty}(\mathbb{R}^{d+1};H)}\textbf{I}_{|\tau|\leq 3\lambda(\varrho)}\bigg(\int_{\varrho}^{\infty}(t-\tau)^{\alpha-\sigma_{1}}
				\frac{(\phi(\kappa^{-2}))^{\frac{\tilde{\delta}_{0}}{2}}}{\kappa}\,d\kappa\bigg)^{2}\\
				&\lesssim\big\|h\big\|^{2}_{L_{\infty}(\mathbb{R}^{d+1};H)}\textbf{I}_{|\tau|\leq 3\lambda(\varrho)}(t-\tau)^{2(\alpha-\sigma_{1})}(\phi(\varrho^{-2}))^{\tilde{\delta}_{0}}.
			\end{align*}
			Thus, we derive
			\begin{align*}
				\big|\mathbb{S}h_{12}(t,x)\big|&=\bigg(\int_{-\infty}^{t}\big|\mathcal{S}^{\frac{\tilde{\delta}_{0}}{2}}_{\alpha,\sigma_{1}}(t-\tau)\star h\big|_{H}^{2}\,d\tau\bigg)^{\frac{1}{2}}\\&\lesssim \big\|h\big\|_{L_{\infty}(\mathbb{R}^{d+1};H)}
				(\phi(\varrho^{-2}))^{\frac{\tilde{\delta}_{0}}{2}}
				\bigg(\int_{|t-\tau|\leq 4\lambda(\varrho)}(t-\tau)^{2(\alpha-\sigma_{1})}\,d\tau\bigg)^{\frac{1}{2}}\lesssim\big\|h\big\|_{L_{\infty}(\mathbb{R}^{d+1};H)}.
			\end{align*}
			Thus, we obtain that
			$$
			\fint_{Q_{\varrho}}\fint_{Q_{\varrho}}\big|\mathbb{S}h_{1}(t,x)-\mathbb{S}h_{1}(s,y)\big|\,dxdtdsdy\lesssim \big\|h\big\|_{L_{\infty}(\mathbb{R}^{d+1};H)}.
			$$
			
			$\bullet$ Step 2: Estimate of $\fint_{Q_{\varrho}}\fint_{Q_{\varrho}}\big|\mathbb{S}h_{2}(t,x)-\mathbb{S}h_{2}(s,x)\big|\,dxdtdsdy$.
			
			Note that
			\begin{align*}
				&\big|\mathbb{S}h_{2}(t,x)-\mathbb{S}h_{2}(s,x)\big|\\&=
				\bigg|\bigg(\int_{-\infty}^{-2\lambda(\varrho)}\big|\mathcal{S}^{\frac{\tilde{\delta}_{0}}{2}}_{\alpha,\sigma_{1}}(t-\tau)\star h_{2}\big|_{H}^{2}\,d\tau\bigg)^{\frac{1}{2}}-
				\bigg(\int_{-\infty}^{-2\lambda(\varrho)}\big|\mathcal{S}^{\frac{\tilde{\delta}_{0}}{2}}_{\alpha,\sigma_{1}}(s-\tau)\star h_{2}\big|_{H}^{2}\,d\tau\bigg)^{\frac{1}{2}}\bigg|\\
				&\leq\bigg(\int_{-\infty}^{-2\lambda(\varrho)}\big|\big(\mathcal{S}^{\frac{\tilde{\delta}_{0}}{2}}_{\alpha,\sigma_{1}}(t-\tau)-
				\mathcal{S}^{\frac{\tilde{\delta}_{0}}{2}}_{\alpha,\sigma_{1}}(s-\tau)\big)\star h_{2}\big|_{H}^{2}\,d\tau\bigg)^{\frac{1}{2}},
			\end{align*}
			Note that $\alpha(1-\tilde{\delta}_{0}/2)-\sigma_{1}=-1/2$, we derive
			\begin{align*}
				\big|\big(\mathcal{S}^{\frac{\tilde{\delta}_{0}}{2}}_{\alpha,\sigma_{1}}(t-\tau)-
				\mathcal{S}^{\frac{\tilde{\delta}_{0}}{2}}_{\alpha,\sigma_{1}}(s-\tau)\big)\star h_{2}\big|_{H}^{2}&=
				\bigg|\int_{\mathbb{R}^{d}}\int_{s}^{t}
				\mathcal{S}^{\frac{\tilde{\delta}_{0}}{2}}_{\alpha,1+\sigma_{1}}(\theta-\tau,z)h_{2}(\tau,x-z)\,d\theta dz\bigg|_{H}^{2}\\
				&\lesssim\big\|h\big\|^{2}_{L_{\infty}(\mathbb{R}^{d+1};H)}\bigg(\int_{s}^{t}(\theta-\tau)^{-\frac{3}{2}}\,d\theta\bigg)^{2},
			\end{align*}
			By Minkowski's inequality,
			\begin{align*}
				\big|\mathbb{S}h_{2}(t,x)-\mathbb{S}h_{2}(s,x)\big|&\lesssim\big\|h\big\|_{L_{\infty}(\mathbb{R}^{d+1};H)}
				\int_{s}^{t}\bigg(\int_{-\infty}^{-2\lambda(\varrho)}\big(\theta-\tau\big)^{-3}\,d\tau\bigg)^{\frac{1}{2}}\,d\theta\lesssim
				\big\|h\big\|_{L_{\infty}(\mathbb{R}^{d+1};H)}.
			\end{align*}
			Thus, we obtain that
			$$
			\fint_{Q_{\varrho}}\fint_{Q_{\varrho}}\big|\mathbb{S}h_{2}(t,x)-\mathbb{S}h_{2}(s,x)\big|\,dxdtdsdy\lesssim \big\|h\big\|_{L_{\infty}(\mathbb{R}^{d+1};H)}.
			$$
			
			$\bullet$ Step 3: Estimate of $\fint_{Q_{\varrho}}\fint_{Q_{\varrho}}\big|\mathbb{S}h_{3}(s,x)-\mathbb{S}h_{3}(s,y)\big|\,dxdtdsdy$.
			
			Note that $h_{3}(\tau,z)=0$ for $\tau\geq -2\lambda(\varrho)$ or $|z|\leq 2\varrho$. Thus, by Minkowski's inequality, we derive
			\begin{align*}
				&\big|\mathbb{S}h_{3}(s,x)-\mathbb{S}h_{3}(s,y)\big|\\
				&\leq\bigg(\int_{-\infty}^{-2\lambda(\varrho)}\bigg|\int_{\mathbb{R}^{d}}
				\big(\mathcal{S}^{\frac{\tilde{\delta}_{0}}{2}}_{\alpha,\sigma_{1}}(s-\tau,x-z)-
				\mathcal{S}^{\frac{\tilde{\delta}_{0}}{2}}_{\alpha,\sigma_{1}}(s-\tau,y-z)\big)h_{3}(\tau,z)\,dz\bigg|_{H}^{2}\,d\tau\bigg)
				^{\frac{1}{2}}.
			\end{align*}
			Let $\theta(x,y,\mu)=\mu x+(1-\mu)y$ for $\mu\in (0,1)$ and combine Lemma \ref{Some proposition of S}
			\begin{align*}
				&\bigg|\int_{\mathbb{R}^{d}}
				\big(\mathcal{S}^{\frac{\tilde{\delta}_{0}}{2}}_{\alpha,\sigma_{1}}(s-\tau,x-z)-
				\mathcal{S}^{\frac{\tilde{\delta}_{0}}{2}}_{\alpha,\sigma_{1}}(s-\tau,y-z)\big)h_{3}(\tau,z)\,dz\bigg|_{H}\\
				&\leq\big\|h\big\|_{L_{\infty}(\mathbb{R}^{d+1};H)}\int_{|z|\geq 2\varrho}\int_{0}^{1}\big|\nabla\mathcal{S}^{\frac{\tilde{\delta}_{0}}{2}}_{\alpha,\sigma_{1}}(s-\tau,\theta(x,y,\mu)-z)
				\cdot(x-y)\big|\,d\mu\,dz\\
				&\lesssim\big\|h\big\|_{L_{\infty}(\mathbb{R}^{d+1};H)}\varrho\int_{|z|\geq \varrho}\big|\nabla\mathcal{S}^{\frac{\tilde{\delta}_{0}}{2}}_{\alpha,\sigma_{1}}(s-\tau,z)\big|
				\,dz
			\end{align*}
			Thus, we obtain
			\begin{align*}
				&\big|\mathbb{S}h_{3}(s,x)-\mathbb{S}h_{3}(s,y)\big|\\
				&\lesssim \varrho\big\|h\big\|_{L_{\infty}(\mathbb{R}^{d+1};H)}\bigg(\int_{-\infty}^{-2\lambda(\varrho)}\bigg(\int_{|z|\geq \varrho}\big|\nabla\mathcal{S}^{\frac{\tilde{\delta}_{0}}{2}}_{\alpha,\sigma_{1}}(s-\tau,z)\big|
				\,dz\bigg)^{2}\,d\tau\bigg)^{\frac{1}{2}}\\
				&\lesssim\varrho\big\|h\big\|_{L_{\infty}(\mathbb{R}^{d+1};H)}\bigg(\int_{\lambda(\varrho)}^{\infty}\bigg(\int_{|z|\geq \varrho}\big|\nabla\mathcal{S}^{\frac{\tilde{\delta}_{0}}{2}}_{\alpha,\sigma_{1}}(\tau,z)\big|
				\,dz\bigg)^{2}\,d\tau\bigg)^{\frac{1}{2}}\\
				&\lesssim\varrho\big\|h\big\|_{L_{\infty}(\mathbb{R}^{d+1};H)}\bigg[\bigg(\int_{\lambda(\varrho)}^{\infty}
				\bigg(\int_{|z|\geq(\phi^{-1}(\tau^{-\alpha}))^{-\frac{1}{2}}}
				\big|\nabla\mathcal{S}^{\frac{\tilde{\delta}_{0}}{2}}_{\alpha,\sigma_{1}}(\tau,z)\big|
				\,dz\bigg)^{2}\,d\tau\bigg)^{\frac{1}{2}}\\
				&\qquad+\bigg(\int_{\lambda(\varrho)}^{\infty}
				\bigg(\int_{\varrho\leq|z|\leq(\phi^{-1}(\tau^{-\alpha}))^{-\frac{1}{2}}}
				\big|\nabla\mathcal{S}^{\frac{\tilde{\delta}_{0}}{2}}_{\alpha,\sigma_{1}}(\tau,z)\big|
				\,dz\bigg)^{2}\,d\tau\bigg)^{\frac{1}{2}}\bigg]
			\end{align*}
			Note that $2\alpha-2\sigma_{1}-\alpha\tilde{\delta}_{0}=-1$ and combine Lemma \ref{Some proposition of S}, \eqref{Pro.Convergence} and \eqref{lower scailing condition}, we derive $\phi^{-1}(r^{-1})\leq \varrho^{-2}r^{-1}\phi(\varrho^{-2})^{-1}$, and
			\begin{align*}
				&\bigg(\int_{\lambda(\varrho)}^{\infty}\bigg(\int_{|z|\geq(\phi^{-1}(\tau^{-\alpha}))^{-\frac{1}{2}}}
				\big|\nabla\mathcal{S}^{\frac{\tilde{\delta}_{0}}{2}}_{\alpha,\sigma_{1}}(\tau,z)\big|
				\,dz\bigg)^{2}\,d\tau\bigg)^{\frac{1}{2}}\\
				&\lesssim\bigg(\int_{\lambda(\varrho)}^{\infty}\bigg(\int_{|z|\geq(\phi^{-1}(\tau^{-\alpha}))^{-\frac{1}{2}}}
				\big|\nabla\mathcal{S}^{\frac{\tilde{\delta}_{0}}{2}}_{\alpha,\sigma_{1}}(\tau,z)\big|
				\,dz\bigg)^{2}\,d\tau\bigg)^{\frac{1}{2}}\\
				&\lesssim\bigg(\int_{\lambda(\varrho)}^{\infty}\bigg(\int_{(\phi^{-1}(\tau^{-\alpha}))^{-\frac{1}{2}}}^{\infty}
				\frac{(\phi(\kappa^{-2}))^{\frac{\tilde{\delta}_{0}}{2}}}{\kappa^{2}}\,d\kappa
				\bigg)^{2}\tau^{2\alpha-2\sigma_{1}}\,d\tau\bigg)^{\frac{1}{2}}\\
				&\lesssim\bigg(\int_{\lambda(\varrho)}^{\infty}\phi^{-1}(r^{-\alpha})r^{-1}\,dr\bigg)^{\frac{1}{2}}\lesssim
				\bigg(\int_{\phi(\varrho^{-2})^{-1}}^{\infty}\phi^{-1}(r^{-1})r^{-1}\,dr\bigg)^{\frac{1}{2}}\lesssim \varrho^{-1},
			\end{align*}
			\begin{align*}
				&\bigg(\int_{\lambda(\varrho)}^{\infty}
				\bigg(\int_{\varrho\leq|z|\leq(\phi^{-1}(\tau^{-\alpha}))^{-\frac{1}{2}}}
				\big|\nabla\mathcal{S}^{\frac{\tilde{\delta}_{0}}{2}}_{\alpha,\sigma_{1}}(\tau,z)\big|
				\,dz\bigg)^{2}\,d\tau\bigg)^{\frac{1}{2}}\\
				&\lesssim\bigg(\int_{\lambda(\varrho)}^{\infty}\bigg(\int_{\varrho}^{(\phi^{-1}(\tau^{-\alpha}))^{-\frac{1}{2}}}
				\int_{(\phi(\kappa^{-2}))^{-1}}^{2\tau^{\alpha}}\big(\phi^{-1}(r^{-1})\big)^{\frac{d+1}{2}}
				r^{-\frac{\tilde{\delta}_{0}}{2}}\tau^{-\sigma_{1}}\kappa^{d-1}\,dr\,d\kappa\bigg)^{2}\,d\tau\bigg)^{\frac{1}{2}}\\
				&=\bigg(\int_{\lambda(\varrho)}^{\infty}\bigg(\int_{\phi(\varrho^{-2})^{-1}}^{2\tau^{\alpha}}
				\int_{\varrho}^{\big(\phi^{-1}(r^{-1})\big)^{-\frac{1}{2}}}\big(\phi^{-1}(r^{-1})\big)^{\frac{d+1}{2}}
				r^{-\frac{\tilde{\delta}_{0}}{2}}\tau^{-\sigma_{1}}\kappa^{d-1}\,d\kappa\,dr\bigg)^{2}\,d\tau\bigg)^{\frac{1}{2}}\\
				&\lesssim\bigg(\int_{\lambda(\varrho)}^{\infty}\bigg(\int_{\phi(\varrho^{-2})^{-1}}^{2\tau^{\alpha}}
				\big(\phi^{-1}(r^{-1})\big)^{\frac{1}{2}}
				r^{-\frac{\tilde{\delta}_{0}}{2}}\tau^{-\sigma_{1}}\,dr\bigg)^{2}\,d\tau\bigg)^{\frac{1}{2}}\\
				&\lesssim\int_{\phi(\varrho^{-2})^{-1}}^{\infty}
				\bigg(\int_{(\frac{r}{2})^{\frac{1}{\alpha}}}^{\infty}\tau^{-2\sigma_{1}}\,d\tau\bigg)^{\frac{1}{2}}
				\big(\phi^{-1}(r^{-1})\big)^{\frac{1}{2}}
				r^{-\frac{\tilde{\delta}_{0}}{2}}\,dr\\
				&\lesssim\int_{\phi(\varrho^{-2})^{-1}}^{\infty}
				\big(\phi^{-1}(r^{-1})\big)^{\frac{1}{2}}
				r^{-1}\,dr\lesssim \varrho^{-1}.
			\end{align*}
			Thus, we obtain that
			$$
			\fint_{Q_{\varrho}}\fint_{Q_{\varrho}}\big|\mathbb{S}h_{3}(s,x)-\mathbb{S}h_{3}(s,y)\big|\,dxdtdsdy\lesssim \big\|h\big\|_{L_{\infty}(\mathbb{R}^{d+1};H)}.
			$$
			
			$\bullet$ Step 4: Estimate of $\fint_{Q_{\varrho}}\fint_{Q_{\varrho}}\big|\mathbb{S}h_{4}(s,x)-\mathbb{S}h_{4}(s,y)\big|\,dxdtdsdy$.
			
			First, note $h_{4}$ is supported in $\big(-\infty,-2\lambda(\varrho)\big)\times B_{3\varrho}$, for any $(t,x)\in Q_{\varrho}$, we obtain
			\begin{align*}
				\big|\mathbb{S}h_{4}(t,x)\big|&\lesssim\bigg(\int_{-\infty}^{-2\lambda(\varrho)}
				\bigg|\int_{B_{3\varrho}}\mathcal{S}^{\frac{\tilde{\delta}_{0}}{2}}_{\alpha,\sigma_{1}}
				(t-\tau,x-z)h_{4}(\tau,z)\,dz\bigg|^{2}_{H}\,d\tau\bigg)^{\frac{1}{2}}\\
				&\lesssim\big\|h\big\|_{L_{\infty}(\mathbb{R}^{d+1};H)}\bigg(\int_{\lambda(\varrho)}^{\infty}
				\bigg(\int_{B_{4\varrho}}\big|\mathcal{S}^{\frac{\tilde{\delta}_{0}}{2}}_{\alpha,\sigma_{1}}
				(\tau,z)\big|\,dz\bigg)^{2}\,d\tau\bigg)^{\frac{1}{2}}
			\end{align*}
			Note that
			\begin{align*}
				&\int_{\lambda(\varrho)}^{\infty}
				\bigg(\int_{B_{4\varrho}}\big|\mathcal{S}^{\frac{\tilde{\delta}_{0}}{2}}_{\alpha,\sigma_{1}}
				(\tau,z)\big|\,dz\bigg)^{2}\,d\tau\\&\leq\int_{\lambda(\varrho)}^{\lambda(4\varrho)}
				\bigg(\int_{B_{4\varrho}}\big|\mathcal{S}^{\frac{\tilde{\delta}_{0}}{2}}_{\alpha,\sigma_{1}}
				(\tau,z)\big|\,dz\bigg)^{2}\,d\tau+\int_{\lambda(4\varrho)}^{\infty}
				\bigg(\int_{B_{4\varrho}}\big|\mathcal{S}^{\frac{\tilde{\delta}_{0}}{2}}_{\alpha,\sigma_{1}}
				(\tau,z)\big|\,dz\bigg)^{2}\,d\tau.
			\end{align*}
			By using Lemma \ref{Some proposition of S}, we derive
			\begin{align}\label{Min 1}
				\int_{\lambda(\varrho)}^{\lambda(4\varrho)}
				\bigg(\int_{B_{4\varrho}}\big|\mathcal{S}^{\frac{\tilde{\delta}_{0}}{2}}_{\alpha,\sigma_{1}}
				(\tau,z)\big|\,dz\bigg)^{2}\,d\tau\leq\int_{\lambda(\varrho)}^{\lambda(4\varrho)}\tau^{-1}\,d\tau\lesssim 1,
			\end{align}
			and
			\begin{align*}
				&\int_{\lambda(4\varrho)}^{\infty}
				\bigg(\int_{B_{4\varrho}}\big|\mathcal{S}^{\frac{\tilde{\delta}_{0}}{2}}_{\alpha,\sigma_{1}}
				(\tau,z)\big|\,dz\bigg)^{2}\,d\tau\\
				&\lesssim\int_{\lambda(4\varrho)}^{\infty}
				\bigg(\int_{B_{4\varrho}}\int_{(\phi(|z|^{-2}))^{-1}}^{2\tau^{\alpha}}
				\big(\phi^{-1}(r^{-1})\big)^{\frac{d}{2}}r^{-\frac{\tilde{\delta}_{0}}{2}}\tau^{-\sigma_{1}}\,dr\,dz\bigg)^{2}\,d\tau\\
				&\lesssim\int_{\lambda(4\varrho)}^{\infty}
				\bigg(\int_{B_{4\varrho}}\bigg[\int_{(\phi(|z|^{-2}))^{-1}}^{(\phi(\varrho^{-2}/16))^{-1}}
				+\int_{(\phi(\varrho^{-2}/16))^{-1}}^{2\tau^{\alpha}}\bigg]
				\big(\phi^{-1}(r^{-1})\big)^{\frac{d}{2}}r^{-\frac{\tilde{\delta}_{0}}{2}}\tau^{-\sigma_{1}}\,dr\,dz\bigg)^{2}\,d\tau.
			\end{align*}
			Note that $\tilde{\delta}_{0}-2+(2\sigma_{1}-1)/\alpha=0$, we derive
			\begin{align}\label{Min 2}
				&\int_{\lambda(4\varrho)}^{\infty}\bigg(\int_{B_{4\varrho}}\int_{(\phi(|z|^{-2}))^{-1}}^{(\phi(\varrho^{-2}/16))^{-1}}
				\big(\phi^{-1}(r^{-1})\big)^{\frac{d}{2}}r^{-\frac{\tilde{\delta}_{0}}{2}}\tau^{-\sigma_{1}}\,dr\,dz\bigg)^{2}\,d\tau\notag\\
				&\lesssim\int_{\lambda(4\varrho)}^{\infty}\bigg(\int_{0}^{(\phi(\varrho^{-2}/16))^{-1}}
				\int_{|z|\leq (\phi^{-1}(r^{-1}))^{-\frac{1}{2}}}
				\big(\phi^{-1}(r^{-1})\big)^{\frac{d}{2}}r^{-\frac{\tilde{\delta}_{0}}{2}}\tau^{-\sigma_{1}}\,dz\,dr\bigg)^{2}\,d\tau\notag\\
				&\lesssim\big(\phi(\varrho^{-2})\big)^{\tilde{\delta}_{0}-2}\int_{\lambda(4\varrho)}^{\infty}\tau^{-2\sigma_{1}}\,d\tau
				\lesssim 1,
			\end{align}
			Note that $\big(\phi^{-1}(r^{-1})\big)^{\frac{d}{2}}\lesssim \varrho^{-d}\big(\phi(\varrho^{-2})\big)^{-\frac{d}{2}}r^{-\frac{d}{2}}$, $\alpha(1-\tilde{\delta}_{0}/2)-\sigma_{1}=-1/2$, we derive
			\begin{align}\label{Min 3}
				&\int_{\lambda(4\varrho)}^{\infty}\bigg(\int_{B_{4\varrho}}\int_{(\phi(\varrho^{-2}/16))^{-1}}^{2\tau^{\alpha}}
				\big(\phi^{-1}(r^{-1})\big)^{\frac{d}{2}}r^{-\frac{\tilde{\delta}_{0}}{2}}\tau^{-\sigma_{1}}\,dr\,dz\bigg)^{2}\,d\tau\notag\\
				&\lesssim\int_{\lambda(4\varrho)}^{\infty}\tau^{-2\sigma_{1}}\phi(\varrho^{-2})^{-d}
				\bigg(\int_{(\phi(\varrho^{-2}/16))^{-1}}^{2\tau^{\alpha}}r^{-\frac{d}{2}-\frac{\tilde{\delta}_{0}}{2}}\,dr\bigg)^{2}\,d\tau\notag\\
				&\lesssim\int_{\lambda(4\varrho)}^{\infty}\tau^{-2\sigma_{1}}\phi(\varrho^{-2})^{-d}\big[\tau^{2\alpha-\alpha d-\alpha\tilde{\delta}_{0}}
				+\big(\phi(\varrho^{-2})\big)^{-2+d+\tilde{\delta}_{0}}+\textbf{I}_{d+\tilde{\delta}_{0}=2}
				\tau^{2\alpha\varepsilon}\big(\phi(\varrho^{-2})\big)^{2\varepsilon}\big]\,d\tau\notag\\
				&\lesssim 1.
			\end{align}
			where $\varepsilon>0$ is small enough, such that $2\sigma_{1}>1+2\alpha\varepsilon$. Thus, by using Minkowski's inequality and combining with \eqref{Min 1}, \eqref{Min 2}, \eqref{Min 3}, we obtain $\big|\mathbb{S}h_{4}(t,x)\big|\lesssim\big\|h\big\|_{L_{\infty}(\mathbb{R}^{d+1};H)}$.
			
			Thus, we obtain that
			$$
			\fint_{Q_{\varrho}}\fint_{Q_{\varrho}}\big|\mathbb{S}h_{4}(s,x)-\mathbb{S}h_{4}(s,y)\big|\,dxdtdsdy\lesssim \big\|h\big\|_{L_{\infty}(\mathbb{R}^{d+1};H)}.
			$$
			Therefore, from Step 1--Step 4, we obtain \eqref{transform inequality}, which also implies that \eqref{BMO estimate} holds.
		\end{proof}
		
		\emph{The proof of Lemma \rm\ref{strong type p}}. We consider the Fefferman-Stein function of $\mathbb{S}h$, which is defined as
		$$
		\big(\mathbb{S}h\big)^{\sharp}(t,x)=\sup_{(t,x)\in Q_{\varrho}}\fint_{Q_{\varrho}}\left|\mathbb{S}h(s,y)-\big(\mathbb{S}h\big)_{Q_{\varrho}}\right|\,dyds.
		$$
		Obviously, $\big(\mathbb{S}h\big)^{\sharp}$ is sublinear in $h$, and by the Fefferman-Stein theorem \cite{Grafakos}, for $1<p<\infty$,
		$$
		\big\|h\big\|_{L_{p}(\mathbb{R}^{d+1};H)}\lesssim\big\|\big(h\big)^{\sharp}\big\|_{L_{p}(\mathbb{R}^{d+1};H)}\lesssim
		\big\|h\big\|_{L_{p}(\mathbb{R}^{d+1};H)}.
		$$
		Since the operator $\mathbb{S}$ is strong type $(2,2)$, this implies that
		$$
		\big\|\big(\mathbb{S}h\big)^{\sharp}\big\|_{L_{2}(\mathbb{R}^{d+1})}\lesssim \big\|h\big\|_{L_{2}(\mathbb{R}^{d+1};H)},
		$$
		Proposition \ref{dengjiaxingzhi} implies that
		$$
		\big\|\big(\mathbb{S}h\big)^{\sharp}\big\|_{L_{\infty}(\mathbb{R}^{d+1})}\lesssim \big\|h\big\|_{L_{\infty}(\mathbb{R}^{d+1};H)},
		$$
		Then by Marcinkiewicz interpolation, for any $2<p<\infty$,
		$$
		\big\|\big(\mathbb{S}h\big)^{\sharp}\big\|_{L_{p}(\mathbb{R}^{d+1})}\lesssim \big\|h\big\|_{L_{p}(\mathbb{R}^{d+1};H)}.
		$$
	\end{proof}
	Now we consider L\'{e}vy process $Z^{k}_{t}$ and consider the function
	$$
	w(t,x)=\sum_{k=1}^{\infty}\int_{0}^{t}\int_{\mathbb{R}^{d}}\mathcal{S}_{\alpha,\sigma_{2}}(t-s,x-y)f^{k}(s,y)\,dydZ^{k}_{s},
	$$
	for any $c\geq 0$, by using Burkholder-Davis-Gundy inequality and Remark \ref{some remark about yang}, we derive
	\begin{align}\label{B-D-G estimate}
		&\big\|\big(\phi(-\Delta)\big)^{c}w\big\|_{\mathscr{L}_{p}(T)}\notag\\
		&\lesssim\bigg\|\bigg(\sum_{k=1}^{\infty}
		\int_{0}^{t}\int_{\mathbb{R}^{d_{1}}}\big|\big(\phi(-\Delta)\big)^{c}\mathcal{S}_{\alpha,\sigma_{2}}(t-s)\star f^{k}(s)\big|^{2}|y|^{2}\Pi_{x}(\,ds,\,dy)\bigg)^{\frac{1}{2}}\bigg\|_{L_{p}([0,T]\times\Omega;L_{p})}\notag\\
		&\leq C\sum_{r=1}^{d_{1}}\bigg\|\int_{0}^{t}\big|\big(\phi(-\Delta)\big)^{c}\mathcal{S}_{\alpha,\sigma_{2}}(t-s)\star f^{r}(s)\big|^{p}\,ds\bigg\|_{L_{1}\big([0,T]\times\Omega;L_{1}(l_{2})\big)},
	\end{align}
	the constant $C$ is dependent of $T$.
	\begin{lemma}\label{Besov control Lemma}
		For the constant $p>2$, $\varepsilon,\delta>0$ and satisfy
		$$
		\frac{1}{p}<\sigma_{2}-\frac{\alpha}{2}\varepsilon,\text{ }\sigma_{2}-\alpha+\delta<\frac{1}{p},\text{ }\delta<\frac{1}{p},
		$$
		there exist constant $C$ is dependent of $\alpha,\sigma_{2},p,d,\varepsilon,\delta,T$ such that
		\begin{align*}
			\int_{0}^{T}\int_{\mathbb{R}^{d}}\int_{0}^{t}\bigg|\big(\phi(-\Delta)\big)^{\frac{\tilde{\delta}_{1}+\varepsilon}{2}}
			\mathcal{S}_{\alpha,\sigma_{2}}(t-s)\star f(s)\bigg|^{p}\,ds\,dxdt
			\leq C\int_{0}^{T}\big\|f(t)\big\|^{p}_{B^{\phi,\varepsilon}_{p,p}}\,dt.
		\end{align*}
	\end{lemma}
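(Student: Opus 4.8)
The plan is to strip away the probabilistic wrapping and reduce the claim to a purely deterministic space--time convolution estimate, and then to attack that by Littlewood--Paley localization together with the Mittag--Leffler asymptotics encoded in \eqref{Fourier transform 2} and the kernel bounds of Lemma \ref{Some proposition of S}. Writing $K_\tau := \big(\phi(-\Delta)\big)^{\frac{\tilde{\delta}_1+\varepsilon}{2}}\mathcal{S}_{\alpha,\sigma_2}(\tau)$, the integrand on the left is $|K_{t-s}\star f(s)|^p$. By Fubini the inner $dx$ and $ds$ integrals combine into $\|K_{t-s}\star f(s)\|_{L_p}^p$, and after exchanging the $s$ and $t$ integrations over $\{0<s<t<T\}$ and substituting $\tau=t-s$ I would bound the whole quantity by $\int_0^T\big(\int_0^\infty\|K_\tau\star f(s)\|_{L_p}^p\,d\tau\big)\,ds$. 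Thus it suffices to prove, for a fixed spatial function $g=f(s)$, the estimate $\int_0^\infty\|K_\tau\star g\|_{L_p}^p\,d\tau\lesssim\|g\|_{B^{\phi,\varepsilon}_{p,p}}^p$, after which integrating in $s$ finishes the proof.

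For the spatial estimate I would decompose $g=\sum_j\Delta_j g$ into $\phi$-adapted Littlewood--Paley blocks, on which $\phi(|\xi|^2)\sim\mu_j$ (say $\mu_j=2^j$) and $\|g\|_{B^{\phi,\varepsilon}_{p,p}}^p\sim\sum_j\mu_j^{\varepsilon p/2}\|\Delta_j g\|_{L_p}^p$. On the $j$-th block $K_\tau$ acts as the Fourier multiplier $\tau^{\alpha-\sigma_2}\phi(|\xi|^2)^{\frac{\tilde{\delta}_1+\varepsilon}{2}}E_{\alpha,1-\sigma_2+\alpha}\!\big(-\tau^\alpha\phi(|\xi|^2)\big)$; using that a smooth symbol on a single dyadic annulus is $L_p$--bounded with norm comparable to its size (via a Mikhlin-type bound, whose derivative conditions follow from \eqref{B.S.T,func.bdd} and the scaling condition \eqref{lower scailing condition}) together with the two regimes of the Mittag--Leffler function ($E_{\alpha,\beta}(-z)\sim\text{const}$ for $z\lesssim1$ and $\sim z^{-1}$ for $z\gtrsim1$), I obtain $\|K_\tau\star\Delta_j g\|_{L_p}\lesssim N_j(\tau)\|\Delta_j g\|_{L_p}$ with $N_j(\tau)=\tau^{\alpha-\sigma_2}\mu_j^{\frac{\tilde{\delta}_1+\varepsilon}{2}}\min\big(1,\tau^{-\alpha}\mu_j^{-1}\big)$. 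Splitting the $\tau$--integral at the resonance $\tau=\mu_j^{-1/\alpha}$, the low regime converges precisely because $\sigma_2<\alpha+\tfrac1p$ and the high regime because $\sigma_2>\tfrac1p$ (guaranteed by $\tfrac1p<\sigma_2-\tfrac\alpha2\varepsilon$); the two resulting exponents both collapse, by the definition $\tilde{\delta}_1=2-(2\sigma_2-2/p)/\alpha$, to give the clean per-block identity $\int_0^\infty N_j(\tau)^p\,d\tau\lesssim\mu_j^{\varepsilon p/2}$.

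The delicate point, and the step I expect to be the main obstacle, is the passage from the per-block bound to the full estimate: the right-hand side is the Besov norm with third index $p$, so I must reproduce an $\ell^p$ sum over $j$, whereas the naive route through the Littlewood--Paley square function only yields an $\ell^2$ sum, which is too large for $p>2$. The correct mechanism is that $N_j(\tau)$, viewed as a function of $j$ for fixed $\tau$, is sharply peaked at the resonant scale $\mu_j\sim\tau^{-\alpha}$ and decays geometrically on both sides: towards low frequencies at rate $\mu_j^{(\tilde{\delta}_1+\varepsilon)/2}$ and towards high frequencies at rate $\mu_j^{-(\sigma_2-1/p)/\alpha+\varepsilon/2}$, the latter being genuine decay exactly because $\tfrac1p<\sigma_2-\tfrac\alpha2\varepsilon$. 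This two-sided geometric decay makes the array $\tau\mapsto(N_j(\tau))_j$ almost diagonal, so that a Schur-type test (equivalently, Young's inequality for the convolution in $j$ against the $\log\tau$ scale) upgrades the per-block bound to $\int_0^\infty\|K_\tau\star g\|_{L_p}^p\,d\tau\lesssim\sum_j\mu_j^{\varepsilon p/2}\|\Delta_j g\|_{L_p}^p\sim\|g\|_{B^{\phi,\varepsilon}_{p,p}}^p$. The auxiliary parameter $\delta$, constrained by $\sigma_2-\alpha+\delta<\tfrac1p$ and $\delta<\tfrac1p$, enters precisely here: it quantifies the admissible decay rates on the two sides of the resonance and secures convergence of the corresponding geometric series while matching the third Besov index, so that the real-interpolation (trace) characterization of $B^{\phi,\varepsilon}_{p,p}$ by the $L_p(d\tau)$--integral is reproduced rather than the stronger $B^{\phi,\varepsilon}_{p,2}$ one.
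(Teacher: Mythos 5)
Your proposal is correct and follows essentially the same route as the paper's proof: Littlewood--Paley localization, two-regime per-block bounds on $\big(\phi(-\Delta)\big)^{\frac{\tilde{\delta}_{1}+\varepsilon}{2}}\mathcal{S}_{\alpha,\sigma_{2}}$ derived from the Mittag--Leffler representation (the paper's Proposition \ref{pinlvjubuhua estimate}, stated there in the equivalent $\delta$-dependent form $t^{-\frac{1}{p}-\frac{\alpha\varepsilon}{2}}\wedge(\phi(2^{2j}))^{\frac{\delta}{\alpha}+\frac{\varepsilon}{2}}t^{-\frac{1}{p}+\delta}$ rather than your resonance form $N_{j}(\tau)$), followed by the upgrade from the $\ell^{2}$ square-function sum to the $\ell^{p}$ Besov sum via H\"older in $j$ exploiting geometric decay off the resonance $\phi(2^{2j})\sim\tau^{-\alpha}$, which is exactly your Schur-type step (the paper's auxiliary exponents $a\in(0,\alpha\varepsilon)$ and $b\in(0,2\delta)$ are the weights your test requires). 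Your per-block computation $\int_{0}^{\infty}N_{j}(\tau)^{p}\,d\tau\lesssim(\phi(2^{2j}))^{\frac{\varepsilon p}{2}}$ and the identification of the $\ell^{2}$-versus-$\ell^{p}$ mismatch as the crux both check out under the stated constraints, so the sketch matches the paper's argument in all essential respects.
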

	\begin{proof}
		We introduce the Littlewood-Paley decomposition,
		$$
		\Delta_{j}=\big(\Delta_{j-1}+\Delta_{j}+\Delta_{j+1}\big)\Delta_{j},\text{ }j=\pm1,\pm2,...,\text{ }\Delta_{0}=\big(\Delta_{0}+\Delta_{1}\big)\Delta_{0}.
		$$
		where
		$$
		\Delta_{j}=\mathcal{F}^{-1}\big(\psi(2^{-j}\xi)\big),\text{ }\sum_{j\in\mathbb{Z}}\psi(2^{-j}\xi)=1,\xi\neq 0,\text{ }\psi_{0}(\xi)=1- \sum_{j=1}^{\infty}\psi(2^{-j}\xi),
		$$
		$\psi(\cdot)\in\mathcal{S}(\mathbb{R}^{d})$ and supported in the strip $\{\xi:\frac{1}{2}\leq|\xi|\leq 1\}$. For any $1\leq p,q\leq \infty$, $s\in\mathbb{R}$, we can define the general Besov space and Tribel-Lizorkin space, $B^{\phi,s}_{p,q}$, $F^{\phi,s}_{p,q}$, see in \cite{Mikulevicius,Kim2}.
		
		First, we claim the following frequency localized estimate.
		\begin{proposition}\label{pinlvjubuhua estimate}
			Under the condition of Lemma {\rm\ref{Besov control Lemma}}, we derive
			$$
			\big\|\Delta_{j}\big(\phi(-\Delta)\big)^{\frac{\tilde{\delta}_{1}+\varepsilon}{2}}\mathcal{S}_{\alpha,\sigma_{2}}(t,x)\big\|_{L_{1}}
			\lesssim
			\big(t^{-\frac{1}{p}-\frac{\alpha\varepsilon}{2}}\wedge\big(\phi(2^{2j})\big)^{\frac{\delta}{\alpha}
				+\frac{\varepsilon}{2}}t^{-\frac{1}{p}+\delta}\big),j=0,1,2,....
			$$
		\end{proposition}
		\begin{proof}
			First, the estimate $
			\big\|\Delta_{j}\big(\phi(-\Delta)\big)^{\frac{\tilde{\delta}_{1}+\varepsilon}{2}}\mathcal{S}_{\alpha,\sigma_{2}}(t,x)\big\|_{L_{1}}
			\lesssim
			t^{-\frac{1}{p}-\frac{\alpha\varepsilon}{2}}
			$
			follows from Lemma \ref{Some proposition of S}. Next,
			note that
			\begin{align*}
				\Delta_{j}\big(\phi(-\Delta)\big)^{\frac{\tilde{\delta}_{1}+\varepsilon}{2}}\mathcal{S}_{\alpha,\sigma_{2}}(t,x)
				&=\mathcal{F}^{-1}\big[\psi(2^{-j}\xi)(\phi(|\xi|^{2}))^{\frac{\tilde{\delta}_{1}+\varepsilon}{2}}
				\mathcal{F}\mathcal{S}_{\alpha,\sigma_{2}}(t,\xi)\big](x)\\
				&=2^{jd}\mathcal{F}^{-1}\big[\psi(\xi)(\phi(|2^{j}\xi|^{2}))^{\frac{\tilde{\delta}_{1}+\varepsilon}{2}}
				\mathcal{F}\mathcal{S}_{\alpha,\sigma_{2}}(t,2^{j}\xi)\big](2^{j}x),
			\end{align*}
			this implies
			\begin{align*}
				\big\|\Delta_{j}\big(\phi(-\Delta)\big)^{\frac{\tilde{\delta}_{1}+\varepsilon}{2}}\mathcal{S}_{\alpha,\sigma_{2}}\big\|_{L_{1}}
				=\big\|\bar{\Delta}_{j}\big(\phi(-\Delta)\big)^{\frac{\tilde{\delta}_{1}+\varepsilon}{2}}\mathcal{S}_{\alpha,\sigma_{2}}\big\|_{L_{1}},
			\end{align*}
			where
			$
			\mathcal{F}\big(\bar{\Delta}_{j}
			\big(\phi(-\Delta)\big)^{\frac{\tilde{\delta}_{1}+\varepsilon}{2}}\mathcal{S}_{\alpha,\sigma_{2}}\big)(t,\xi)
			=\psi(\xi)(\phi(|2^{j}\xi|^{2}))^{\frac{\tilde{\delta}_{1}+\varepsilon}{2}}
			\mathcal{F}\mathcal{S}_{\alpha,\sigma_{2}}(t,2^{j}\xi).
			$
			From \cite{Gorenflo}, we derive
			\begin{align*}
				E_{\alpha,\beta}(-z) = \int_{0}^{\infty} \frac{1}{\pi \alpha} r^{\frac{1-\beta}{\alpha}} \exp\left(-r^{\frac{1}{\alpha}}\right)
				\frac{r \sin(\pi(1-\beta)) + z \sin(\pi(1-\beta + \alpha))}{r^2 + 2rz \cos(\pi \alpha) + z^2} \, dr,\text{  }\forall z>0,\beta<1+\alpha.
			\end{align*}
			Thus, we derive
			\begin{align*}
				\big|\mathcal{F}\big(\bar{\Delta}_{j}
				\big(\phi(-\Delta)\big)^{\frac{\tilde{\delta}_{1}+\varepsilon}{2}}\mathcal{S}_{\alpha,\sigma_{2}}\big)(t,\xi)\big|&=
				t^{\alpha-\sigma_{2}}\psi(\xi)(\phi(|2^{j}\xi|^{2}))^{\frac{\tilde{\delta}_{1}+\varepsilon}{2}}
				E_{\alpha,1-\sigma_{2}+\alpha}(-t^{\alpha}\phi(|2^{j}\xi|^{2}))\\
				&\lesssim J_{1}+J_{2},
			\end{align*}
			where
			\begin{align*}
				J_{1}&=\textbf{I}_{\frac{1}{2}\leq |\xi|\leq 2}t^{\alpha-\sigma_{2}}\int_{0}^{\infty}
				\frac{\exp(-r^{\frac{1}{\alpha}})r^{\frac{\sigma_{2}}{\alpha}}\big(\phi(|2^{j}\xi|^{2})\big)^{\frac{\tilde{\delta}_{1}+\varepsilon}{2}}}
				{r^{2}+2rt^{\alpha}
					\phi(|2^{j}\xi|^{2})\cos(\alpha\pi)
					+t^{2\alpha}\big(\phi(|2^{j}\xi|^{2})\big)^{2}}\,dr,\\
				J_{2}&=\textbf{I}_{\frac{1}{2}\leq |\xi|\leq 2}t^{2\alpha-\sigma_{2}}\int_{0}^{\infty}\frac{\exp(-r^{\frac{1}{\alpha}})r^{\frac{\sigma_{2}}{\alpha}-1}
					\big(\phi(|2^{j}\xi|^{2})\big)^{\frac{\tilde{\delta}_{1}+\varepsilon}{2}+1}}{r^{2}+2rt^{\alpha}
					\phi(|2^{j}\xi|^{2})\cos(\alpha\pi)
					+t^{2\alpha}\big(\phi(|2^{j}\xi|^{2})\big)^{2}}\,dr.
			\end{align*}
			By change variable $r\leftrightarrow r^{\alpha}t^{\alpha}\phi(|2^{j}\xi|^{2})$, and note $\sigma_{2}/\alpha+(\tilde{\sigma}_{1}+\varepsilon)/2-1=1/\alpha p+\varepsilon/2$, we derive
			\begin{align}\label{estimate J1}
				J_{1}&=\textbf{I}_{\frac{1}{2}\leq |\xi|\leq 2}\big(\phi(|2^{j}\xi|^{2})\big)^{\frac{1}{\alpha p}+\frac{\varepsilon}{2}}\int_{0}^{\infty}\frac{\exp\big(-rt\big(\phi(|2^{j}\xi|^{2})\big)^{\frac{1}{\alpha}}\big)
					r^{\alpha+\sigma_{2}-1}}{r^{2\alpha}+2r\cos(\alpha\pi)+1}\,dr\notag\\
				&\lesssim\textbf{I}_{\frac{1}{2}\leq |\xi|\leq 2}\big(\phi(|2^{j}\xi|^{2})\big)^{\frac{1}{\alpha p}+\frac{\varepsilon}{2}}
				\bigg[\int_{0}^{1}r^{\alpha+\sigma_{2}-\frac{1}{p}+\delta-1}
				\big[t\big(\phi(|2^{j}\xi|^{2})\big)^{\frac{1}{\alpha}}\big]^{-\frac{1}{p}+\delta}\,dr\notag\\
				&\quad+\int_{1}^{\infty}r^{\sigma_{2}-\frac{1}{p}+\delta-1-\alpha}
				\big[t\big(\phi(|2^{j}\xi|^{2})\big)^{\frac{1}{\alpha}}\big]^{-\frac{1}{p}+\delta}\,dr\bigg]\notag\\
				&\lesssim\big(\phi(2^{2j})\big)^{\frac{\delta}{\alpha}
					+\frac{\varepsilon}{2}}t^{-\frac{1}{p}+\delta},
			\end{align}
			\begin{align}\label{estimate J2}
				J_{2}&=\textbf{I}_{\frac{1}{2}\leq |\xi|\leq 2}\big(\phi(|2^{j}\xi|^{2})\big)^{\frac{1}{\alpha p}+\frac{\varepsilon}{2}}\int_{0}^{\infty}\frac{\exp\big(-rt\big(\phi(|2^{j}\xi|^{2})\big)^{\frac{1}{\alpha}}\big)
					r^{\sigma_{2}-1}}{r^{2\alpha}+2r\cos(\alpha\pi)+1}\,dr\notag\\
				&\lesssim\textbf{I}_{\frac{1}{2}\leq |\xi|\leq 2}\big(\phi(|2^{j}\xi|^{2})\big)^{\frac{1}{\alpha p}+\frac{\varepsilon}{2}}
				\bigg[\int_{0}^{1}r^{\sigma_{2}-\frac{1}{p}+\delta-1}
				\big[t\big(\phi(|2^{j}\xi|^{2})\big)^{\frac{1}{\alpha}}\big]^{-\frac{1}{p}+\delta}\,dr\notag\\
				&\quad+\int_{1}^{\infty}r^{\sigma_{2}-\frac{1}{p}+\delta-1-2\alpha}
				\big[t\big(\phi(|2^{j}\xi|^{2})\big)^{\frac{1}{\alpha}}\big]^{-\frac{1}{p}+\delta}\,dr\bigg]\notag\\
				&\lesssim\big(\phi(2^{2j})\big)^{\frac{\delta}{\alpha}
					+\frac{\varepsilon}{2}}t^{-\frac{1}{p}+\delta}
			\end{align}
			For any multi-index $\gamma$, $D^{\gamma}_{\xi}\psi$ is also Schwartz function, and
			$$
			D^{\gamma}_{\xi}\phi(|2^{j}\xi|^{2})=\sum_{\frac{|\gamma|}{2}\leq k\leq |\gamma|}\big(2^{j}\big)^{2k-|\gamma|}\phi^{(k)}(|2^{j}\xi|^{2})\prod_{i=1}^{d}|\xi_{i}|^{\beta_{i}},\quad\text{where }\sum_{i=1}^{d}\beta_{i}=2k-|\gamma|,
			$$
			combine \eqref{B.S.T,func.bdd} we obtain
			$$
			\left|D^{\gamma}_{\xi}\phi(|2^{j}\xi|^{2})\right|\lesssim 2^{-j|\gamma|}\left|\xi\right|^{-|\gamma|}\phi(|2^{j}\xi|^{2}).
			$$
			By the Leibniz rule, we obtain
			\begin{align*}
				\bigg|D^{\gamma}_{\xi}\big(\phi(|2^{j}\xi|^{2})\big)^{\frac{\tilde{\delta}_{1}+\varepsilon}{2}}\bigg|&\lesssim
				\bigg|\sum_{\substack{\gamma_{1}+\gamma_{2}+...+\gamma_{l}=\gamma,\\ 1\leq l\leq|\gamma|}}\big(\phi(|2^{j}\xi|^{2})\big)^{\frac{\tilde{\delta}_{1}+\varepsilon}{2}-l}
				\prod_{i=1}^{l}D^{\gamma_{i}}_{\xi}\phi(|2^{j}\xi|^{2})\bigg|\\
				&\lesssim 2^{-j|\gamma|}\big|\xi\big|^{-|\gamma|}\big(\phi(|2^{j}\xi|^{2})\big)^{\frac{\tilde{\delta}_{1}+\varepsilon}{2}},
			\end{align*}
			thus we obtain
			\begin{align*}
				&\big|D_{\xi}^{\gamma}J_{1}\big|\\&\lesssim\textbf{I}_{\frac{1}{2}\leq |\xi|\leq 2}t^{\alpha-\sigma_{2}}\sum_{\substack{\gamma_{1}+\gamma_{2}=\gamma,\\\beta_{1}+...+\beta_{l}=\gamma_{2},\\ 1\leq l\leq|\gamma_{2}|}}\int_{0}^{\infty}\exp(-r^{\frac{1}{\alpha}})r^{\frac{\sigma_{2}}{\alpha}}
				\big|D_{\xi}^{\gamma_{1}}\big(\phi(|2^{j}\xi|^{2})
				\big)^{\frac{\tilde{\delta}_{1}+\varepsilon}{2}}\bigg|\frac{\prod_{i=1}^{l}
					D^{\beta_{i}}_{\xi}g(r,t,2^{j}\xi)}{[g(r,t,2^{j}\xi)]^{l+1}}\bigg|\,dr,
			\end{align*}
			where
			$$
			g(r,t,2^{j}\xi)=r^{2}+2rt^{\alpha}\phi(|2^{j}\xi|^{2})\cos(\alpha\pi)+t^{2\alpha}\big(\phi(|2^{j}\xi|^{2})\big)^{2}.
			$$
			Note that $\frac{1}{2}\leq |\xi|\leq 2$, and
			\begin{align*}
				&\bigg|\prod_{i=1}^{l}D^{\beta_{i}}_{\xi}g(r,t,2^{j}\xi)\bigg|\lesssim\prod_{i=1}^{l}
				\bigg(\textbf{I}_{\beta_{i}=0}r^{2}+2rt^{\alpha}\cos(\alpha\pi)\phi(|2^{j}\xi|^{2})|2^{j}\xi|^{-|\beta_{i}|}
				+t^{2\alpha}(\phi(|2^{j}\xi|^{2}))^{2}|2^{j}\xi|^{-|\beta_{i}|}\bigg),
			\end{align*}
			combine \eqref{estimate J1}, we derive
			\begin{align*}
				\big|D_{\xi}^{\gamma}J_{1}\big|&\lesssim\textbf{I}_{\frac{1}{2}\leq |\xi|\leq 2}t^{\alpha-\sigma_{2}}\int_{0}^{\infty}
				\frac{\exp(-r^{\frac{1}{\alpha}})r^{\frac{\sigma_{2}}{\alpha}}\big(\phi(|2^{j}\xi|^{2})\big)^{\frac{\tilde{\delta}_{1}+\varepsilon}{2}}}
				{r^{2}+2rt^{\alpha}
					\phi(|2^{j}\xi|^{2})\cos(\alpha\pi)
					+t^{2\alpha}\big(\phi(|2^{j}\xi|^{2})\big)^{2}}\,dr\\
				&\lesssim\textbf{I}_{\frac{1}{2}\leq |\xi|\leq 2}\big(\phi(|2^{j}\xi|^{2})\big)^{\frac{1}{\alpha p}+\frac{\varepsilon}{2}}
				\bigg[\int_{0}^{1}r^{\alpha+\sigma_{2}-\frac{1}{p}+\delta-1}
				\big[t\big(\phi(|2^{j}\xi|^{2})\big)^{\frac{1}{\alpha}}\big]^{-\frac{1}{p}+\delta}\,dr\notag\\
				&\quad+\int_{1}^{\infty}r^{\sigma_{2}-\frac{1}{p}+\delta-1-\alpha}
				\big[t\big(\phi(|2^{j}\xi|^{2})\big)^{\frac{1}{\alpha}}\big]^{-\frac{1}{p}+\delta}\,dr\bigg]\notag\\
				&\lesssim\big(\phi(2^{2j})\big)^{\frac{\delta}{\alpha}
					+\frac{\varepsilon}{2}}t^{-\frac{1}{p}+\delta},
			\end{align*}
			and the estimate $\big|D_{\xi}^{\gamma}J_{2}\big|\lesssim \big(\phi(2^{2j})\big)^{\frac{\delta}{\alpha}
				+\frac{\varepsilon}{2}}t^{-\frac{1}{p}+\delta}$ is similar to $\big|D_{\xi}^{\gamma}J_{1}\big|$.
			
			Therefore, for any multi-index, we obtain
			$$
			\big|D^{\gamma}_{\xi}\big[\psi(\xi)(\phi(|2^{j}\xi|^{2}))^{\frac{\tilde{\delta}_{1}+\varepsilon}{2}}
			\mathcal{F}\mathcal{S}_{\alpha,\sigma_{2}}(t,2^{j}\xi)\big]\big|\lesssim\big(\phi(2^{2j})\big)^{\frac{\delta}{\alpha}
				+\frac{\varepsilon}{2}}t^{-\frac{1}{p}+\delta},
			$$
			and we derive
			\begin{align*}
				&\big\|\bar{\Delta}_{j}\big(\phi(-\Delta)\big)^{\frac{\tilde{\delta}_{1}+\varepsilon}{2}}\mathcal{S}_{\alpha,\sigma_{2}}\big\|_{L_{1}}\\
				&\lesssim\int_{\mathbb{R}^{d}}(1+|x|^{2})^{-d}(1+|x|^{2})^{d}\big|\bar{\Delta}_{j}
				\big(\phi(-\Delta)\big)^{\frac{\tilde{\delta}_{1}+\varepsilon}{2}}\mathcal{S}_{\alpha,\sigma_{2}}(t,x)\big|\,dx\\
				&\lesssim\int_{\mathbb{R}^{d}}(1+|x|^{2})^{-d}\,dx\sup_{\xi}\big|\big(I-\Delta\big)^{d}
				[\psi(\xi)(\phi(|2^{j}\xi|^{2}))^{\frac{\tilde{\delta}_{1}+\varepsilon}{2}}
				\mathcal{F}\mathcal{S}_{\alpha,\sigma_{2}}(t,2^{j}\xi)]\big|\\
				&\lesssim\big(\phi(2^{2j})\big)^{\frac{\delta}{\alpha}
					+\frac{\varepsilon}{2}}t^{-\frac{1}{p}+\delta}.
			\end{align*}
		\end{proof}
		\textit{Proof of Lemma \rm\ref{Besov control Lemma}}.
		Note that $L_{p}\approx F^{\phi,0}_{p,2}$ for any $1<p<\infty$, we derive
		\begin{align*}
			&\int_{0}^{T}\int_{0}^{t}\big\|\big(\phi(-\Delta)\big)^{\frac{\tilde{\delta}_{1}+\varepsilon}{2}}
			\mathcal{S}_{\alpha,\sigma_{2}}(t-s)\star f(s)\big\|^{p}_{L_{p}}\,ds\,dt\\
			&\sim\int_{0}^{T}\int_{0}^{t}\big\|\Delta_{0}\big(\phi(-\Delta)\big)^{\frac{\tilde{\delta}_{1}+\varepsilon}{2}}
			\mathcal{S}_{\alpha,\sigma_{2}}(t-s)\star f(s)\big\|^{p}_{L_{p}}\,ds\,dt\\
			&\quad+\int_{0}^{T}\int_{0}^{t}\big\|\big(\sum_{j=1}^{\infty}\big|\Delta_{j}\big(\phi(-\Delta)\big)^{\frac{\tilde{\delta}_{1}+\varepsilon}{2}}
			\mathcal{S}_{\alpha,\sigma_{2}}(t-s)\star f(s)\big|^{2}\big)^{\frac{1}{2}}\big\|^{p}_{L_{p}}\,ds\,dt\triangleq I_{1}+I_{2}.
		\end{align*}
		We estimate $I_{1}$ and $I_{2}$ separately.
		
		\underline{Estimate of $I_{1}$}:\text{ } Combining with Proposition \ref{pinlvjubuhua estimate}, we derive
		\begin{align*}
			&\int_{0}^{T}\int_{0}^{t}\big\|\Delta_{0}\big(\phi(-\Delta)\big)^{\frac{\tilde{\delta}_{1}+\varepsilon}{2}}
			\mathcal{S}_{\alpha,\sigma_{2}}(t-s)\star f(s)\big\|^{p}_{L_{p}}\,ds\,dt\\
			&\lesssim\int_{0}^{T}\int_{0}^{t}\big\|\sum_{i=0}^{1}\Delta_{i}\big(\phi(-\Delta)\big)^{\frac{\tilde{\delta}_{1}+\varepsilon}{2}}
			\mathcal{S}_{\alpha,\sigma_{2}}(t-s)\star \Delta_{0}f(s)\big\|^{p}_{L_{p}}\,ds\,dt\\
			&\lesssim\int_{0}^{T}\int_{s}^{T}\big((t-s)^{-\frac{1}{p}-\frac{\alpha\varepsilon}{2}}\wedge(\phi(4))^{\frac{\delta}{\alpha}
				+\frac{\varepsilon}{2}}(t-s)^{-\frac{1}{p}+\delta}\big)^{p}\big\|\Delta_{0}f(s)\big\|_{L_{p}}^{p}\,dtds\\
			&\lesssim\int_{0}^{T}\int_{(s+(\phi(4))^{-\frac{1}{\alpha}})\wedge T}^{T}(t-s)^{-1-\frac{p\alpha\varepsilon}{2}}\big\|\Delta_{0}f(s)\big\|_{L_{p}}^{p}\,dt\,ds\\
			&\quad+\int_{0}^{T}\int_{s}^{(s+(\phi(4))^{-\frac{1}{\alpha}})\wedge T}\big(\phi(4)\big)^{\frac{p\delta}{\alpha}+\frac{p\varepsilon}{2}}(t-s)^{-1+p\delta}\big\|\Delta_{0}f(s)\big\|_{L_{p}}^{p}\,dt\,ds\\
			&\lesssim\int_{0}^{T}\big\|\Delta_{0}f(s)\big\|_{L_{p}}^{p}\,ds.
		\end{align*}
		
		\underline{Estimate of $I_{2}$}:\text{ } Combining Minkowski's inequality, Proposition \ref{pinlvjubuhua estimate}, and Fubini's theorem, we derive
		\begin{align*}
			&\int_{0}^{T}\int_{0}^{t}\big\|\big(\sum_{j=1}^{\infty}\big|\Delta_{j}\big(\phi(-\Delta)\big)^{\frac{\tilde{\delta}_{1}+\varepsilon}{2}}
			\mathcal{S}_{\alpha,\sigma_{2}}(t-s)\star f(s)\big|^{2}\big)^{\frac{1}{2}}\big\|^{p}_{L_{p}}\,ds\,dt\\
			&\lesssim\int_{0}^{T}\int_{s}^{T}\big\|\big(\sum_{j=1}^{\infty}\big|\sum_{i=j-1}^{j+1}\Delta_{i}
			\big(\phi(-\Delta)\big)^{\frac{\tilde{\delta}_{1}+\varepsilon}{2}}
			\mathcal{S}_{\alpha,\sigma_{2}}(t-s)\star \Delta_{j}f(s)\big|^{2}\big)^{\frac{1}{2}}\big\|^{p}_{L_{p}}\,ds\,dt\\
			&\lesssim\int_{0}^{T}\int_{s}^{T}\big(\sum_{j=1}^{\infty}\big\|\sum_{i=j-1}^{j+1}\Delta_{i}
			\big(\phi(-\Delta)\big)^{\frac{\tilde{\delta}_{1}+\varepsilon}{2}}
			\mathcal{S}_{\alpha,\sigma_{2}}(t-s)\star \Delta_{j}f(s)\big\|_{L_{p}}^{2}\big)^{\frac{p}{2}}\,dt\,ds\\
			&\lesssim\int_{0}^{T}\int_{s}^{T}\big(\sum_{j=1}^{\infty}\big[(t-s)^{-\frac{1}{p}-\frac{\alpha\varepsilon}{2}}
			\wedge\big(\phi(2^{2j})\big)^{\frac{\delta}{\alpha}+\frac{\varepsilon}{2}}(t-s)^{-\frac{1}{p}+\delta}
			\big]^{2}\|\Delta_{j}f(s)\|^{2}_{L_{p}}\big)^{\frac{p}{2}}\,dt\,ds\\
			&\lesssim\int_{0}^{T}\int_{(s+(\phi(2^{2j}))^{-\frac{1}{\alpha}})\wedge T}^{T}\big(\sum_{j\in J(t,s,j)}^{\infty}\|\Delta_{j}f(s)\|^{2}_{L_{p}}\big)^{\frac{p}{2}}(t-s)^{-1-\frac{p\alpha\varepsilon}{2}}\,dt\,ds\\
			&\quad+\int_{0}^{T}\int_{s}^{(s+(\phi(2^{2j}))^{-\frac{1}{\alpha}})\wedge T}\big(\sum_{j\notin J(t,s,j)}\big(\phi(2^{2j})\big)^{\frac{2\delta}{\alpha}+\varepsilon}\|\Delta_{j}f(s)\|_{L_{p}}^{2}\big)^{\frac{p}{2}}
			(t-s)^{-1+p\delta}\,dt\,ds,
		\end{align*}
		where
		\begin{align*}
			J(t,s,j)&=\big\{(t,s,j):(t-s)^{-\frac{\alpha\varepsilon}{2}-\delta}
			<\big(\phi(2^{2j})\big)^{\frac{\delta}{\alpha}+\frac{\varepsilon}{2}}\big\},\\
			(t-s)^{-\frac{\alpha\varepsilon}{2}-\delta}
			&<\big(\phi(2^{2j})\big)^{\frac{\delta}{\alpha}+\frac{\varepsilon}{2}}\Rightarrow t>s+\big(\phi(2^{2j})\big)^{-\frac{1}{\alpha}}.
		\end{align*}
		We take $a\in(0,\alpha\varepsilon)$ and use H\"{o}lder's inequality,
		\begin{align*}
			\big(\sum_{j\in J(t,s,j)}^{\infty}\|\Delta_{j}f(s)\|^{2}_{L_{p}}\big)^{\frac{p}{2}}&=\big(\sum_{j\in J(t,s,j)}(\phi(2^{2j}))^{-\frac{a}{\alpha}}(\phi(2^{2j}))^{\frac{a}{\alpha}}\|\Delta_{j}f(s)\|^{2}_{L_{p}}\big)^{\frac{p}{2}}\\
			&\leq\big(\sum_{j\geq j_{0}(t,s)}\big(\phi(2^{2j})\big)^{-\frac{ap}{\alpha(p-2)}}\big)^{\frac{p-2}{2}}\big(\sum_{j\geq j_{0}(t,s)}\big(\phi(2^{2j})\big)^{\frac{ap}{\alpha2}}\|\Delta_{j}f(s)\|^{p}_{L_{p}}\big),
		\end{align*}
		where $j_{0}(t,s)$ is the minimal integer depending on $t,s$ such that
		$
		(t-s)^{-\frac{\alpha\varepsilon}{2}-\delta}\big(\phi(2^{2j})\big)^{-\frac{2\delta+\alpha\varepsilon}{2\alpha}}<1.
		$
		Combining with \eqref{lower scailing condition}, we obtain
		$$
		\big(\sum_{j\geq j_{0}(t,s)}\big(\phi(2^{2j})\big)^{-\frac{ap}{\alpha(p-2)}}\big)^{\frac{p-2}{2}}\leq (t-s)^{\frac{ap}{2}},
		$$
		\begin{align*}
			&\int_{0}^{T}\int_{(s+(\phi(2^{2j}))^{-\frac{1}{\alpha}})\wedge T}^{T}\big(\sum_{j\in J(t,s,j)}^{\infty}\|\Delta_{j}f(s)\|^{2}_{L_{p}}\big)^{\frac{p}{2}}(t-s)^{-1-\frac{p\alpha\varepsilon}{2}}\,dt\,ds\\
			&\lesssim\int_{0}^{T}\int_{(s+(\phi(2^{2j}))^{-\frac{1}{\alpha}})\wedge T}^{T}\big(\sum_{j\geq j_{0}(t,s)}\big(\phi(2^{2j})\big)^{\frac{ap}{\alpha2}}\|\Delta_{j}f(s)\|^{p}_{L_{p}}\big)
			(t-s)^{-1-\frac{p\alpha\varepsilon}{2}+\frac{ap}{2}}\,dtds\\
			&\lesssim\int_{0}^{T}\sum_{j\geq j_{0}(t,s)}\big(\phi(2^{2j})\big)^{\frac{p\varepsilon}{2}}\|\Delta_{j}f(s)\|_{L_{p}}^{p}\,ds.
		\end{align*}
		We take $0<b<2\delta$, and use H\"{o}lder's inequality, we get
		\begin{align*}
			&\int_{0}^{T}\int_{s}^{(s+(\phi(2^{2j}))^{-\frac{1}{\alpha}})\wedge T}\big(\sum_{j\notin J(t,s,j)}\big(\phi(2^{2j})\big)^{\frac{2\delta}{\alpha}+\varepsilon}\|\Delta_{j}f(s)\|_{L_{p}}^{2}\big)^{\frac{p}{2}}
			(t-s)^{-1+p\delta}\,dt\,ds\\
			&\lesssim\int_{0}^{T}\int_{s}^{(s+(\phi(2^{2j}))^{-\frac{1}{\alpha}})\wedge T}
			\big(\sum_{j\leq j_{0}(t,s)}(\phi(2^{2j}))^{\frac{b}{\alpha}}
			\big(\phi(2^{2j})\big)^{\frac{2\delta}{\alpha}+\varepsilon-\frac{b}{\alpha}}\|\Delta_{j}f(s)\|_{L_{p}}^{2}\big)^{\frac{p}{2}}
			(t-s)^{-1+p\delta}\,dt\,ds\\
			&\lesssim\int_{0}^{T}\int_{s}^{(s+(\phi(2^{2j}))^{-\frac{1}{\alpha}})\wedge T}\big(\sum_{j\leq j_{0}(t,s)}(\phi(2^{2j}))^{\frac{bp}{\alpha(p-2)}}\big)^{\frac{p-2}{2}}\\
			&\quad\times\big(\sum_{j\leq j_{0}(t,s)}\big(\phi(2^{2j})^{\frac{(2\delta+\alpha\varepsilon)p}{2\alpha}-\frac{bp}{2\alpha}}\big)\|\Delta_{j}f(s)\|_{L_{p}}^{p}\big)
			(t-s)^{-1+p\delta}\,dtds\\
			&\lesssim\int_{0}^{T}\int_{s}^{(s+(\phi(2^{2j}))^{-\frac{1}{\alpha}})\wedge T}\big(\sum_{j\leq j_{0}(t,s)}\big(\phi(2^{2j})^{\frac{(2\delta+\alpha\varepsilon)p}{2\alpha}-\frac{bp}{2\alpha}}\big)\|\Delta_{j}f(s)\|_{L_{p}}^{p}\big)
			(t-s)^{-\frac{ap}{2}-1+p\delta}\,dtds\\
			&\lesssim\int_{0}^{T}\sum_{j\leq j_{0}(t,s)}\big(\phi(2^{2j})\big)^{\frac{p\varepsilon}{2}}\|\Delta_{j}f(s)\|_{L_{p}}^{p}\,ds.
		\end{align*}
		Combining the estimates for $I_{1}$ and $I_{2}$, we derive Lemma \ref{Besov control Lemma}.
	\end{proof}
	\begin{lemma}{\rm\cite[Theorem 5.3]{Kim2}}
		For $p>1$, $f\in C_{c}^{\infty}(\mathbb{R}^{d})$
		$$
		\int_{0}^{T}\big\|\mathcal{S}_{\alpha,\alpha}(t)\star f\big\|_{L_{p}}^{p}\,dt\leq C\big\|f\big\|_{B_{p,p}^{\phi,-\frac{2}{\alpha p}}},
		$$
		where the constant $C$ is dependent of $\alpha,\sigma_{2},p,d,\delta,T$.
	\end{lemma}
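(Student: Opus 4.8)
The plan is to reduce everything to a frequency-localized kernel bound and then run the Littlewood--Paley argument of Lemma \ref{Besov control Lemma} with $\sigma_{2}$ replaced by $\alpha$. First note that $1-\beta+\alpha=1$ when $\beta=\alpha$, so \eqref{Fourier transform 1} gives $\mathcal{F}\mathcal{S}_{\alpha,\alpha}(t,\xi)=E_{\alpha,1}(-t^{\alpha}\phi(|\xi|^{2}))$; thus $\mathcal{S}_{\alpha,\alpha}$ is the fundamental solution $\mathcal{S}$ itself, and since the $j$-th block of $\|f\|_{B^{\phi,-2/(\alpha p)}_{p,p}}^{p}$ carries the weight $(\phi(2^{2j}))^{-1/\alpha}$, that weight is the scaling we must reproduce. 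The core ingredient is the localized estimate
$$\big\|\Delta_{j}\mathcal{S}_{\alpha,\alpha}(t)\big\|_{L_{1}}\lesssim 1\wedge\big(t^{\alpha}\phi(2^{2j})\big)^{-1},\qquad j\ge 0,$$
which I would establish exactly as in Proposition \ref{pinlvjubuhua estimate}: rescale $\xi\mapsto2^{j}\xi$ so that the symbol becomes $\psi(\xi)E_{\alpha,1}(-t^{\alpha}\phi(|2^{j}\xi|^{2}))$ supported in $\tfrac12\le|\xi|\le1$, then bound its $L_{1}$ norm by the $(1+|x|^{2})^{-d}$ device, controlling $\sup_{\xi}|(I-\Delta)^{d}[\,\cdot\,]|$. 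The decay factor $(1+t^{\alpha}\phi(2^{2j}))^{-1}$ is produced by the integral representation \eqref{asympotic behavior} of $E_{\alpha,1}(-z)$ (equivalently $|E_{\alpha,1}(-z)|\lesssim(1+z)^{-1}$), while the $\xi$-derivatives of $\phi(|2^{j}\xi|^{2})$ are absorbed by \eqref{B.S.T,func.bdd} through the same Leibniz-rule bookkeeping.

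With this in hand I would use $L_{p}\approx F^{\phi,0}_{p,2}$ to split $\|\mathcal{S}_{\alpha,\alpha}(t)\star f\|_{L_{p}}^{p}$ into the low-frequency block $\Delta_{0}$ and the high-frequency square function $\big(\sum_{j\ge1}|\Delta_{j}(\mathcal{S}_{\alpha,\alpha}(t)\star f)|^{2}\big)^{1/2}$. Writing $\Delta_{j}(\mathcal{S}_{\alpha,\alpha}(t)\star f)=\big(\sum_{i=j-1}^{j+1}\Delta_{i}\mathcal{S}_{\alpha,\alpha}(t)\big)\star\Delta_{j}f$, Young's inequality together with the kernel bound yields $\|\Delta_{j}(\mathcal{S}_{\alpha,\alpha}(t)\star f)\|_{L_{p}}\lesssim a_{j}(t)\|\Delta_{j}f\|_{L_{p}}$ with $a_{j}(t)=1\wedge(t^{\alpha}\phi(2^{2j}))^{-1}$, and (for $p\ge2$) Minkowski's inequality reduces the high-frequency part to $\int_{0}^{T}\big(\sum_{j\ge1}a_{j}(t)^{2}\|\Delta_{j}f\|_{L_{p}}^{2}\big)^{p/2}\,dt$. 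The scaling is transparent on a single block, where
$$\int_{0}^{T}a_{j}(t)^{p}\,dt=\int_{0}^{\lambda_{j}\wedge T}1\,dt+\int_{\lambda_{j}\wedge T}^{T}\big(t^{\alpha}\phi(2^{2j})\big)^{-p}\,dt\lesssim(\phi(2^{2j}))^{-1/\alpha},\qquad\lambda_{j}=(\phi(2^{2j}))^{-1/\alpha},$$
the two contributions balancing exactly to the Besov weight when $\alpha p>1$; the $\Delta_{0}$ block is controlled directly because $\phi(2^{2j})$ is bounded on its support.

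To pass from a single block to the full $\ell^{2}$ sum I would reproduce the two-sided Hölder argument of the high-frequency estimate in Lemma \ref{Besov control Lemma}: at fixed $t$ split the $j$-sum at the threshold $j_{0}(t)$ where $t^{\alpha}\phi(2^{2j})\approx1$, insert a compensating weight $(\phi(2^{2j}))^{\pm a/\alpha}$ with a small auxiliary parameter, apply Hölder with exponents $\tfrac{p}{p-2}$ and $\tfrac{p}{2}$, and sum the geometric factor by the lower scaling condition \eqref{lower scailing condition}; one then checks that the accumulated powers of $t$ and of $\phi(2^{2j})$ recombine so that the $t$-integral converges and reproduces $\sum_{j}(\phi(2^{2j}))^{-1/\alpha}\|\Delta_{j}f\|_{L_{p}}^{p}=\|f\|_{B^{\phi,-2/(\alpha p)}_{p,p}}^{p}$. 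I expect the main obstacle to be exactly this interchange of the time integral with the $\ell^{2}$ frequency sum at the \emph{borderline} exponent: here the smoothing index is the critical value corresponding to $\varepsilon=0$ in Lemma \ref{Besov control Lemma}, so the clean balance of the single-block computation only survives for $\alpha p>1$, and the remaining regime must be absorbed using the auxiliary parameter $\delta$ (on which $C$ is permitted to depend) together with the $T$-dependence of the constant, precisely as the $\delta$-dependence in the statement signals. The kernel bound itself is routine once the Mittag--Leffler decay is identified.
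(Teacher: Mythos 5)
The paper itself offers no proof of this lemma: it is quoted directly from \cite[Theorem 5.3]{Kim2}. So your argument cannot be compared against an internal proof; it has to stand on its own, and it almost does. The core is sound for $\alpha p>1$: the identification $\mathcal{F}\mathcal{S}_{\alpha,\alpha}(t,\xi)=E_{\alpha,1}(-t^{\alpha}\phi(|\xi|^{2}))$ via \eqref{Fourier transform 1}, the localized kernel bound $\|\Delta_{j}\mathcal{S}_{\alpha,\alpha}(t)\|_{L_{1}}\lesssim 1\wedge\bigl(t^{\alpha}\phi(2^{2j})\bigr)^{-1}$ obtained by the rescaling and $(1+|x|^{2})^{-d}$ device of Proposition \ref{pinlvjubuhua estimate} together with \eqref{B.S.T,func.bdd}, and the single-block balance $\int_{0}^{T}\bigl(1\wedge(t^{\alpha}\phi(2^{2j}))^{-1}\bigr)^{p}\,dt\lesssim\bigl(\phi(2^{2j})\bigr)^{-1/\alpha}$, which holds precisely when $\alpha p>1$, are all correct.

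The genuine gap is your closing claim that the regime $\alpha p\le 1$ ``must be absorbed using the auxiliary parameter $\delta$ together with the $T$-dependence of the constant.'' No bookkeeping can achieve this, because the inequality is simply false when $\alpha p\le 1$. Since $E_{\alpha,1}(-z)\sim z^{-1}/\Gamma(1-\alpha)$ as $z\to\infty$ (polynomial, not exponential, decay --- this is exactly where the time-fractional case departs from the heat semigroup), your kernel bound is sharp. Take a wave packet $f(x)=e^{i2^{j}x_{1}}\chi(x)$ with $\widehat{\chi}\ge 0$ supported in the unit ball, and pair $\mathcal{S}_{\alpha,\alpha}(t)\star f$ against $e^{i2^{j}x_{1}}\tilde{\chi}(x)$: this gives $\|\mathcal{S}_{\alpha,\alpha}(t)\star f\|_{L_{p}}\gtrsim E_{\alpha,1}(-t^{\alpha}\phi(2^{2j}))$, hence
\begin{align*}
\int_{0}^{T}\|\mathcal{S}_{\alpha,\alpha}(t)\star f\|_{L_{p}}^{p}\,dt\gtrsim \bigl(\phi(2^{2j})\bigr)^{-p}\,T^{1-\alpha p}\quad(\alpha p<1),
\end{align*}
with a logarithmic divergence at $\alpha p=1$, while $\|f\|_{B^{\phi,-2/(\alpha p)}_{p,p}}^{p}\approx\bigl(\phi(2^{2j})\bigr)^{-1/\alpha}$. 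Since $p<1/\alpha$, the ratio blows up as $j\to\infty$. So the restriction $\alpha p>1$ is not an artifact of your method; it is necessary, and the printed statement has silently dropped it (along with the $p$-th power that should sit on the Besov norm --- both are defects of the transcription from \cite{Kim2}). This is consistent with how the paper uses the result: nonzero initial data only ever enters under the indicator $\mathbf{I}_{\alpha p>1}$. The correct conclusion of your analysis should therefore have been that the lemma holds exactly for $\alpha p>1$, not that the remaining case can be salvaged. A secondary, fixable point: your square-function step invokes Minkowski's inequality in a form valid only for $p\ge 2$; for $1<p<2$ you should instead use the pointwise embedding $\ell^{p}\hookrightarrow\ell^{2}$, after which $\bigl\|(\sum_{j}|g_{j}|^{2})^{1/2}\bigr\|_{L_{p}}^{p}\le\sum_{j}\|g_{j}\|_{L_{p}}^{p}$ and the block-by-block estimate suffices, making that case easier rather than harder.
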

	
	\subsection{The regularity result}
	Next, we establish the regularity result for the \eqref{TSFSPDE}.
	
	We first prove an auxiliary Lemma.
	\begin{lemma}\label{fuzhuyinli}
		Let $p\geq 2$, $\gamma\in\mathbb{R}$, $\sigma_{1}<\alpha+\frac{1}{2}$, $\sigma_{2}<\alpha+\frac{1}{p}$, then for $g\in \mathscr{H}_{p}^{\phi,\gamma}(T)$, $h\in\mathscr{H}^{\phi,\gamma+\delta_{0}}_{p}(T,l_{2})$, and $f\in\mathscr{H}^{\phi,\gamma+\delta_{1}}_{p}(T,l_{2},d_{1})$, and $w_{0}\in \mathbb{B}_{p,p}^{\phi,\gamma+2-\frac{2}{\alpha p}}$, the linear equation
		\begin{align}\label{linear equation SPDE}
			\partial_t^\alpha w = \phi(\Delta)w +g(t,x)+\sum_{k=1}^{\infty}\partial_t^{\sigma_{1}}\int_{0}^{t}h^{k}(t,x)\,dB^{k}_{s}
			+\sum_{k=1}^{\infty}\partial_t^{\sigma_{2}}\displaystyle\int_{0}^{t}f^{k}(s,x)\,dZ^{k}_{s},\text{ }
			w(0) =\textbf{I}_{\alpha p>1}w_{0}
		\end{align}
		has the unique solution $w\in \mathcal{H}^{\phi,\gamma+2}_{p}(T)$ and satisfy
		\begin{align}\label{prior estimate}
			\big\|w\big\|_{\mathcal{H}^{\phi,\gamma+2}_{p}(T)}\leq C\big(\textbf{I}_{\alpha p>1}\|w_{0}\|_{\mathbb{B}_{p,p}^{\phi,\gamma+2-\frac{2}{\alpha p}}}+
			\|g\|_{\mathscr{H}_{p}^{\phi,\gamma}(T)}+\|h\|_{\mathscr{H}_{p}^{\phi,\gamma+\delta_{0}}(T,l_{2})}
			+\|f\|_{\mathscr{H}_{p}^{\phi,\gamma+\delta_{1}}(T,l_{2},d_{1})}\big),
		\end{align}
		where the constant $C$ is dependent of $\alpha,\gamma,p,d,\sigma_{1},\sigma_{2}, T$.
	\end{lemma}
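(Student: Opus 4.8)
The plan is to establish the a priori estimate \eqref{prior estimate} first and then deduce existence and uniqueness from it by a completeness-and-density argument. By Remark \ref{some usuful remark}(ii) the operator $(I-\phi(\Delta))^{\gamma/2}$ is an isometric isomorphism between the relevant solution and data spaces, and it commutes with $\partial_t^\alpha$, $\phi(\Delta)$ and the stochastic integrals; applying it to \eqref{linear equation SPDE} reduces the general statement to the case $\gamma=0$, which I would assume henceforth. By linearity I would then decompose $w = w_{\mathrm{in}} + w_g + w_B + w_Z$, where $w_{\mathrm{in}}$ carries the initial value (active only when $\alpha p>1$), $w_g$ the free term $g$, $w_B$ the Wiener noise, and $w_Z$ the Lévy noise, each solving the corresponding single-datum problem with the $\phi(\Delta)w$ term absorbed into the Duhamel kernel $\mathcal{S}_{\alpha,\beta}$ of Lemma \ref{solution formula Lemma}.

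For $w_{\mathrm{in}}$ and $w_g$ I would invoke the deterministic $L_p$-maximal regularity theory, in particular the endpoint trace estimate $\int_0^T\|\mathcal{S}_{\alpha,\alpha}(t)\star f\|_{L_p}^p\,dt \le C\|f\|_{B_{p,p}^{\phi,-2/(\alpha p)}}$ cited from \cite[Theorem 5.3]{Kim2}, to bound them by $\|w_0\|_{\mathbb{B}_{p,p}^{\phi,2-2/(\alpha p)}}$ and $\|g\|_{\mathscr{H}_p^{\phi,0}(T)}$ respectively. For $w_B$ I would apply the Burkholder--Davis--Gundy inequality to pass from the $p$-th moment of $\phi(\Delta)^{\tilde{\delta}_0/2}w_B$ to the square-function operator $\mathbb{S}h$, and then use the strong $(p,p)$ bound of Lemma \ref{strong type p}; since $\tilde{\delta}_0 + \delta_0 = 2$, this yields $\|\phi(\Delta) w_B\|_{\mathscr{L}_p(T)} \lesssim \|h\|_{\mathscr{H}_p^{\phi,\delta_0}(T,l_2)}$.

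For $w_Z$ I would use the BDG estimate \eqref{B-D-G estimate} together with Remark \ref{some remark about yang}(iv) to reduce the $p$-th moment of $\phi(\Delta)^{(\tilde{\delta}_1+\varepsilon)/2}w_Z$ to the pathwise space-time integral of $|\phi(\Delta)^{(\tilde{\delta}_1+\varepsilon)/2}\mathcal{S}_{\alpha,\sigma_2}\star f|^p$, and then invoke Lemma \ref{Besov control Lemma}; combining $\tilde{\delta}_1 + \delta_1 = 2$ with the embedding $H_p^{\phi,\delta_1}\hookrightarrow B_{p,p}^{\phi,\delta_1}$ valid for $p\ge 2$ (equivalently $F_{p,2}^{\phi,\delta_1}\hookrightarrow F_{p,p}^{\phi,\delta_1}$), the arbitrarily small $\varepsilon$ is absorbed and one obtains $\|\phi(\Delta)w_Z\|_{\mathscr{L}_p(T)} \lesssim \|f\|_{\mathscr{H}_p^{\phi,\delta_1}(T,l_2,d_1)}$. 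Summing the four bounds and supplementing them with the control of lower-order norms and of $\sup_{t\le T}\|J_t^{\Lambda-\alpha}w\|$ from Remark \ref{some usuful remark}(iii)--(iv) gives \eqref{prior estimate}.

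Finally, for existence I would first take smooth data $(g,h,f)\in\mathscr{H}_0^\infty$ and smooth $w_0$, for which Lemma \ref{solution formula Lemma} produces an explicit $w\in\mathcal{H}_p^{\phi,2}(T)$ solving the equation in the distributional sense of Definition \ref{solution define}; the a priori estimate then makes the solution map a bounded linear operator on this dense subset, so it extends continuously to all data in $\mathcal{F}_p^{\phi,2}$, and the resulting limit solves \eqref{linear equation SPDE}. Uniqueness is immediate, since the difference of two solutions solves the homogeneous problem with zero data and \eqref{prior estimate} forces it to vanish in $\mathcal{H}_p^{\phi,2}(T)$. The main obstacle is the Lévy term $w_Z$: unlike the Gaussian case, BDG returns only a pathwise $L_p$-in-time bound (the $L_1(l_2)$ term in \eqref{B-D-G estimate}) rather than a genuine square function, so one must spend an $\varepsilon$ of regularity and recover it through the sharp frequency-localized kernel estimates of Lemma \ref{Besov control Lemma}, while handling with care the endpoint cases $\sigma_1=\tfrac12$ and $\sigma_2=\tfrac1p$ in which $\delta_0$ and $\delta_1$ carry the extra $\kappa$.
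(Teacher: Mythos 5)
Your overall architecture matches the paper's: reduction to $\gamma=0$ via the isometry $(I-\phi(\Delta))^{\nu/2}$, splitting by linearity, quoting the deterministic theory of \cite{Kim2} for the initial value and free term, and treating the L\'{e}vy part by combining the BDG bound \eqref{B-D-G estimate} with Lemma \ref{Besov control Lemma} and the embedding $F^{\phi,s}_{p,2}\hookrightarrow B^{\phi,s}_{p,p}$, exactly as in the paper's Case $\sigma_{2}>\frac{1}{p}$ (the paper even uses the same substitution $v=(\phi(-\Delta))^{1-\frac{\tilde{\delta}_{1}+\varepsilon}{2}}w$, $\bar f=(\phi(-\Delta))^{1-\frac{\tilde{\delta}_{1}+\varepsilon}{2}}f$). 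Your treatment of the Wiener term through Lemma \ref{strong type p} rather than citing \cite[Lemma 4.2]{Kim3}, as the paper's proof does, is a harmless and arguably more self-contained variant, since that lemma was developed by the paper precisely as an alternative to Kim's argument.

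The genuine gap is the parameter range $\sigma_{2}\leq\frac{1}{p}$ (and analogously $\sigma_{1}\leq\frac{1}{2}$), which is included in the hypothesis $\sigma_{2}<\alpha+\frac{1}{p}$ and occupies two of the paper's three cases. Your key identity $\tilde{\delta}_{1}+\delta_{1}=2$ holds only for $\sigma_{2}>\frac{1}{p}$: when $\sigma_{2}<\frac{1}{p}$ one has $\delta_{1}=0$ but $\tilde{\delta}_{1}>2$, and when $\sigma_{2}=\frac{1}{p}$ one has $\delta_{1}=\kappa$ but $\tilde{\delta}_{1}=2$. Worse, Lemma \ref{Besov control Lemma} itself requires $\frac{1}{p}<\sigma_{2}-\frac{\alpha}{2}\varepsilon$, so the frequency-localized estimate you rely on is simply unavailable in these regimes; no choice of small $\varepsilon$ rescues it. The paper closes these cases with two separate devices that your proposal does not supply: for $\sigma_{2}<\frac{1}{p}$ it absorbs $\partial_{t}^{\sigma_{2}}$ into the stochastic convolution, rewriting the problem as the deterministic equation $\partial^{\alpha}_{t}w=\phi(\Delta)w+\bar{f}$ with $\bar{f}(t,x)=\sum_{k}\int_{0}^{t}(t-s)^{-\sigma_{2}}f^{k}(s,x)\,dZ^{k}_{s}$ and then applies \cite[Theorem 2.8]{Kim2} together with BDG; for $\sigma_{2}=\frac{1}{p}$ it solves an auxiliary equation with the shifted exponent $\sigma_{2}'=\frac{1}{p}+\frac{\alpha\kappa}{2}$ (for which $\delta_{1}'=\kappa=\delta_{1}$), and recovers the original solution as $w=I^{\kappa}_{t}v$ by uniqueness. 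You flag the endpoint case as needing ``care'' but give no mechanism, and the subcritical case is not mentioned at all, so as written the a priori estimate --- and with it existence and uniqueness --- is only established on part of the stated parameter range.
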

	\begin{proof}
		Note that $\left(I-\phi(\Delta)\right)^{\frac{\nu}{2}}$ is an isometric isomorphism mapping from $\mathcal{H}^{\phi,\gamma+2}_{p}(T)$ to $\mathcal{H}^{\phi,\gamma+2-\nu}_{p}(T)$, we only need to verify the case $\gamma=0$. We will verify the a priori estimate \eqref{prior estimate}.
		
		The case $h=f=0$ follows from \cite[Theorem 2.8]{Kim2}, and the case $g=h=0$, $w_{0}=0$ follows from \cite[Lemma 4.2]{Kim3}. Since the equation \eqref{linear equation SPDE} is linear, it suffices to verify \eqref{linear equation SPDE} for $g=h=0$, $w_{0}=0$. Note that $\mathscr{H}^{\infty}_{0}(T,l_{2},d_{1})$ is dense in $\mathscr{H}_{p}^{\phi,\gamma+\delta_{1}}(T,l_{2},d_{1})$, we only need to verify for $f\in \mathscr{H}^{\infty}_{0}(T,l_{2},d_{1})$.
		
		$\bullet$\text{ }Case $\sigma_{2}>\frac{1}{p}$: By Remark \ref{some usuful remark} (v), we have
		\begin{align*}
			\left\|w\right\|_{\mathscr{L}_{p}(T)}^{p}&\leq C\int_{0}^{T}(T-s)^{\theta-1}\big(\left\|\phi(\Delta)w\right\|^{p}_{\mathscr{L}_{p}(s)}+\left\|f\right\|
			_{\mathscr{L}_{p}(s,l_{2},d_{1})}^{p}\big)\,ds\\
			&\leq C\big(\left\|\phi(\Delta)w\right\|^{p}_{\mathscr{L}_{p}(T)}+\left\|f\right\|
			_{\mathscr{L}_{p}(T,l_{2},d_{1})}^{p}\big)
		\end{align*}
		Combining Lemma \ref{Besov control Lemma}, \eqref{B-D-G estimate}, and denoting $v=(\phi(-\Delta))^{1-\frac{\tilde{\delta}_{1}+\varepsilon}{2}}w$, $\bar{f}=(\phi(-\Delta))^{1-\frac{\tilde{\delta}_{1}+\varepsilon}{2}}f$, where $\varepsilon$ is chosen as in Lemma \ref{Besov control Lemma}, and noting that $F^{\phi,s}_{p,2}\hookrightarrow B^{\phi,s}_{p,p}$ for any $s\in\mathbb{R}$, we derive
		\begin{align*}
			\|\phi(\Delta)w\|_{\mathscr{L}_{p}(T)}&=\|(\phi(-\Delta))^{\frac{\tilde{\delta}_{1}+\varepsilon}{2}}v\|_{\mathscr{L}_{p}(T)}\\
			&\leq C\sum_{r=1}^{d_{1}}\bigg\|\int_{0}^{t}\big|(\phi(-\Delta))^{\frac{\tilde{\delta}_{1}+\varepsilon}{2}}
			\mathcal{S}_{\alpha,\sigma_{2}}(t-s)\star \bar{f}^{r}(s)\big|^{p}\,ds\bigg\|_{L_{1}\big([0,T]\times\Omega;L_{1}(l_{2})\big)}\\
			&\leq C\sum_{r=1}^{d_{1}}\mathbb{E}\int_{0}^{T}\|\bar{f}^{r}(s)\|^{p}_{B^{\phi,\varepsilon}_{p,p}(l_{2})}\,ds
			\leq C\|f\|_{\mathscr{H}^{\phi,\delta_{1}}_{p}(T,l_{2},d_{1})},
		\end{align*}
		thus we obtain
		$$
		\|w\|^{p}_{\mathscr{L}_{p}(T)}\leq C\big(\|\phi(\Delta)w\|^{p}_{\mathscr{L}_{p}(T)}+\left\|f\right\|
		_{\mathscr{L}_{p}(T,l_{2},d_{1})}^{p}\big)\leq C\|f\|^{p}_{\mathscr{H}^{\phi,\delta_{1}}_{p}(T,l_{2},d_{1})}.
		$$
		
		$\bullet$\text{ }Case $\sigma_{2}<\frac{1}{p}$:\text{ } Here, $\delta_{1}=0$, and Equation \eqref{linear equation SPDE} becomes
		$$
		\partial^{\alpha}_{t}w=\phi(\Delta)w+\bar{f}(t,x),\quad \bar{f}(t,x)=\sum_{k=1}^{\infty}\int_{0}^{t}(t-s)^{-\sigma_{2}}f^{k}(s,x)\,dZ^{k}_{s}
		$$
		Using \cite[Theorem 2.8]{Kim2} and the Burkholder-Davis-Gundy inequality,
		\begin{align*}
			\mathbb{E}\big\|\phi(\Delta)w\big\|^{p}_{L_{p}([0,T];L_{p})}\leq C\mathbb{E}\big\|\bar{f}\big\|^{p}_{L_{p}([0,T];L_{p})}
			\leq C\big\|f\big\|^{p}_{\mathscr{L}_{p}(T,l_{2},d_{1})}.
		\end{align*}
		
		$\bullet$\text{ }Case $\sigma_{2}=\frac{1}{p}$:\text{ } Note that $\delta_{1}=\kappa$. Set $\sigma_{2}'=\frac{1}{p}+\frac{\alpha \kappa}{2}>\sigma_{2}$, and consider
		$$
		\partial^{\alpha}_{t}v=\phi(\Delta)v+\sum_{k=1}^{\infty}\partial^{\sigma_{2}'}_{t}\int_{0}^{t}f^{k}(s,x)\,dZ_{s}^{k},\text{ }v(0)=0,
		$$
		Note that $\delta_{1}'=(2\sigma_{2}'-2/p)/\alpha=\delta_{1}=\kappa$. Repeating the argument for the case $\sigma_{2}>1/p$, we derive
		$$
		\|v\|^{p}_{\mathscr{L}_{p}(T)}\leq C\big(\|\phi(\Delta)v\|^{p}_{\mathscr{L}_{p}(T)}+\left\|f\right\|
		_{\mathscr{L}_{p}(T,l_{2},d_{1})}^{p}\big)\leq C\|f\|^{p}_{\mathscr{H}^{\phi,\delta_{1}}_{p}(T,l_{2},d_{1})}.
		$$
		Note that $I^{\kappa}_{t}v$ also satisfies equation \eqref{linear equation SPDE} and the solution is unique,
		hence we obtain
		$$
		\|w\|^{p}_{\mathscr{L}_{p}(T)}=\|I^{\kappa}_{t}v\|^{p}_{\mathscr{L}_{p}(T)}\leq C\|v\|^{p}_{\mathscr{L}_{p}(T)}\leq C\|f\|^{p}_{\mathscr{H}^{\phi,\delta_{1}}_{p}(T,l_{2},d_{1})}.
		$$
	\end{proof}
	We now establish the regularity result for the \eqref{TSFSPDE}. For the nonlinear functions $g$, $h$, and $f$, we adopt assumptions analogous to those employed by K.H. Kim \cite{Kim3, Kim6}.
\begin{assumption}\label{assumption of nonlinear function}
		For any $t\in [0,T]$, $\omega\in \Omega$, $w,v\in \mathscr{H}^{\phi,\gamma+2}_{p}(T)$, we assume that $g(w)\in\mathscr{H}^{\phi,\gamma}_{p}(T,l_{2})$, $h(w)\in\mathscr{H}^{\phi,\gamma+\delta_{0}}_{p}(T,l_{2})$, $f(w)\in\mathscr{H}^{\phi,\gamma+\delta_{0}}_{p}(T,l_{2},d_{1})$. Moreover, for any $\varepsilon>0$, there exists a constant $N(\varepsilon)$ such that
		\begin{align*}
			&\|g(t,w)-g(t,v)\|_{H^{\phi,\gamma}_{p}}+\|h(t,w)-h(t,v)\|_{H^{\phi,\gamma+\delta_{0}}_{p}(l_{2})}
			+\|f(t,w)-f(t,v)\|_{H^{\phi,\gamma+\delta_{1}}_{p}(l_{2},d_{1})}\\
			&\leq\varepsilon\|w-v\|_{H^{\phi,\gamma+2}_{p}}+N\|w-v\|_{H^{\phi,\gamma}_{p}}.
		\end{align*}
	\end{assumption}
	
	\begin{theorem}\label{main theorem 1}
		For $T\in(0,\infty)$, $p\geq 2$, $\alpha\in (0,1)$, $\sigma_{1}<\alpha+\frac{1}{2}$, $\sigma_{2}<\alpha+\frac{1}{p}$, $\gamma\in\mathbb{R}$, and under Assumption \ref{assumption of nonlinear function}, the \eqref{TSFSPDE} admits a unique solution $w\in \mathcal{H}^{\phi,\gamma+2}_{p}(T)$ satisfying the estimate
		\begin{align*}
			\|w\|_{\mathcal{H}^{\phi,\gamma+2}_{p}(T)}\leq C\big(\textbf{I}_{\alpha p>1}\|w_{0}\|_{\mathbb{B}^{\phi,\gamma+2-\frac{2}{\alpha p}}_{p,p}}+\|g(0)\|_{\mathscr{H}^{\phi,\gamma}_{p}(T)}+\|h(0)\|_{\mathscr{H}_{p}^{\phi,\gamma+\delta_{0}}(T,l_{2})}
			+\|f(0)\|_{\mathscr{H}_{p}^{\phi,\gamma+\delta_{1}}(T,l_{2},d_{1})}\big),
		\end{align*}
		where the constant $C$ depends on $\alpha,\gamma,p,\sigma_{1},\sigma_{2},\delta,T$.
	\end{theorem}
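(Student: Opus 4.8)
The plan is to recast \eqref{TSFSPDE} as a fixed-point problem for the linear solution operator and to close a contraction argument by combining the linear a priori bound of Lemma \ref{fuzhuyinli} with the time-smoothing estimate of Remark \ref{some usuful remark}(v). First I would define the map $\mathcal{R}$ by letting $\mathcal{R}v$ be the unique element of $\mathcal{H}^{\phi,\gamma+2}_p(T)$ furnished by Lemma \ref{fuzhuyinli} for the linear equation \eqref{linear equation SPDE} with free terms $(g(v),h(v),f(v))$ and initial value $w_0$. By Assumption \ref{assumption of nonlinear function} (applied with $v=0$ in the Lipschitz bound), the triple $(g(v),h(v),f(v))$ lies in $\mathscr{H}^{\phi,\gamma}_p(T)\times\mathscr{H}^{\phi,\gamma+\delta_0}_p(T,l_{2})\times\mathscr{H}^{\phi,\gamma+\delta_1}_p(T,l_{2},d_{1})$ whenever $v\in\mathcal{H}^{\phi,\gamma+2}_p(T)$, so $\mathcal{R}$ maps $\mathcal{H}^{\phi,\gamma+2}_p(T)$ into itself, and a fixed point of $\mathcal{R}$ is precisely a solution of \eqref{TSFSPDE}.

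For the contraction, take $v_1,v_2\in\mathcal{H}^{\phi,\gamma+2}_p(T)$ and set $\bar{w}=\mathcal{R}v_1-\mathcal{R}v_2$. Since both images share the initial value $w_0$, the difference $\bar{w}$ solves \eqref{linear equation SPDE} with zero initial data and free terms $g(v_1)-g(v_2)$, $h(v_1)-h(v_2)$, $f(v_1)-f(v_2)$. Applying the a priori estimate \eqref{prior estimate} (whose left side dominates $\|\bar w\|_{\mathscr{H}^{\phi,\gamma+2}_p}$) and then the Lipschitz bound of Assumption \ref{assumption of nonlinear function}, and recalling $\delta_0,\delta_1\ge 0$ so that lower regularity norms are dominated, I obtain for every $t\le T$
\[
\|\bar{w}\|_{\mathscr{H}^{\phi,\gamma+2}_p(t)}\le C\varepsilon\,\|v_1-v_2\|_{\mathscr{H}^{\phi,\gamma+2}_p(t)}+CN\,\|v_1-v_2\|_{\mathscr{H}^{\phi,\gamma}_p(t)}.
\]
Choosing $\varepsilon$ so small that $C\varepsilon\le\tfrac12$ absorbs the top-order term; the obstruction is the remaining lower-order term, whose coefficient $N$ is fixed and generally large, so this bound alone is not a contraction.

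To overcome this I would localize in time. By Remark \ref{some usuful remark}(v), with $\theta=\min\{\alpha,1,2(\alpha-\sigma_1)+1,p(\alpha-\sigma_2)+2\}>0$, the same difference satisfies
\[
\|\bar{w}\|^p_{\mathscr{H}^{\phi,\gamma}_p(t)}\le C\int_0^t(t-s)^{\theta-1}\Big(\varepsilon^p\|v_1-v_2\|^p_{\mathscr{H}^{\phi,\gamma+2}_p(s)}+N^p\|v_1-v_2\|^p_{\mathscr{H}^{\phi,\gamma}_p(s)}\Big)\,ds,
\]
so that on a short interval $[0,T_0]$ the integrable kernel contributes a factor $\int_0^{T_0}(t-s)^{\theta-1}\,ds\lesssim T_0^{\theta}$ that renders the lower-order contribution small. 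Running the Picard iteration $w^{(n+1)}=\mathcal{R}w^{(n)}$ and combining the two displayed bounds for the increments $v^{(n)}=w^{(n+1)}-w^{(n)}$ in a weighted quantity of the form $\|v^{(n)}\|^p_{\mathscr{H}^{\phi,\gamma+2}_p(T_0)}+M\|v^{(n)}\|^p_{\mathscr{H}^{\phi,\gamma}_p(T_0)}$ with $M\sim N^p$, one checks that for $T_0$ small (depending only on $C,N,\theta,p$) this quantity contracts geometrically; hence $\{w^{(n)}\}$ is Cauchy and converges to the unique fixed point on $[0,T_0]$. Because $T_0$ is independent of the initial value, I would then solve successively on $[0,T_0],[T_0,2T_0],\dots$, using the terminal value (which lies in the correct initial-trace space by Remark \ref{some usuful remark}(iii)) as the new datum, and patch to obtain the unique solution on all of $[0,T]$.

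Finally, for the a priori estimate I would feed the solution $w$ back into Lemma \ref{fuzhuyinli} and use Assumption \ref{assumption of nonlinear function} with $v=0$ to write $\|g(w)\|_{\mathscr{H}^{\phi,\gamma}_p}\le \varepsilon\|w\|_{\mathscr{H}^{\phi,\gamma+2}_p}+N\|w\|_{\mathscr{H}^{\phi,\gamma}_p}+\|g(0)\|_{\mathscr{H}^{\phi,\gamma}_p}$, and likewise for $h$ and $f$; after absorbing the $\varepsilon$-term into the left side and disposing of the lower-order $N$-term through the fractional Gronwall inequality implicit in the kernel $(t-s)^{\theta-1}$ of Remark \ref{some usuful remark}(v), the bound reduces to the claimed dependence on $\|w_0\|$, $\|g(0)\|$, $\|h(0)\|$ and $\|f(0)\|$. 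The main obstacle throughout is that Assumption \ref{assumption of nonlinear function} only yields a contraction up to the fixed-constant lower-order term $N\|\cdot\|_{\mathscr{H}^{\phi,\gamma}_p}$; the essential device is the gain of the integrable singular kernel from Remark \ref{some usuful remark}(v), which simultaneously localizes the contraction to short time and powers the Gronwall step in the closing estimate.
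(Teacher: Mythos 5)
Your setup (the solution map $\mathcal{R}$ via Lemma \ref{fuzhuyinli}, the Picard scheme, and the closing Gronwall argument for the a priori bound) matches the paper's strategy, but the step you use to defeat the lower-order term $N\|\cdot\|_{\mathscr{H}^{\phi,\gamma}_p}$ — localizing to $[0,T_0]$ and then restarting on $[T_0,2T_0],\dots$ with the terminal value as a fresh initial datum — contains a genuine gap. Equation \eqref{TSFSPDE} is nonlocal in time: the Caputo derivative $\partial_t^\alpha$ involves the kernel $g_{1-\alpha}\ast(w(\cdot)-w_0)$ over the \emph{entire} past, and the stochastic terms $\partial_t^{\sigma_i}\int_0^t\cdots\,dB^k_s$, $\partial_t^{\sigma_i}\int_0^t\cdots\,dZ^k_s$ likewise integrate from $0$. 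Consequently the restriction of the problem to $[T_0,2T_0]$ is \emph{not} an initial value problem of the same form with datum $w(T_0)$: it carries history-dependent forcing terms from $[0,T_0]$ which your patching step silently discards. The class $\mathcal{H}^{\phi,\gamma+2}_p$ and Definition \ref{solution define} are themselves anchored at time $0$, so "solve anew from $T_0$" is not even well formulated without reworking the linear theory to include these memory terms. A secondary (and symptomatic) issue: you justify that $w(T_0)$ lies in the trace space $\mathbb{B}^{\phi,\gamma+2-\frac{2}{\alpha p}}_{p,p}$ by Remark \ref{some usuful remark}(iii), but that remark only gives c\`adl\`ag paths of $J_t^{\Lambda-\alpha}w$ in the much weaker space $H^{\phi,\gamma}_p$; no trace estimate at the level $\gamma+2-\frac{2}{\alpha p}$ is available there. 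Also, in your second display the application of Remark \ref{some usuful remark} must treat $\phi(\Delta)\bar w+\big(g(v_1)-g(v_2)\big)$ as the free term, so the kernel integral should contain $\|\phi(\Delta)\bar w\|^p_{\mathscr{H}^{\phi,\gamma}_p(s)}\lesssim\|\bar w\|^p_{\mathscr{H}^{\phi,\gamma+2}_p(s)}$ as well; this omission is repairable but must be tracked.

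The fix — and this is exactly what the paper does — is to avoid time localization altogether and run the Picard iteration globally on $[0,T]$. Iterating the pair of estimates (the linear bound with $\varepsilon$, and the kernel bound with $(t-s)^{\theta-1}$) $i$ times produces increments controlled by
\[
\sum_{k=0}^{i}\binom{i}{k}\,(\varepsilon^{p})^{i-k}\,\big(CN(\varepsilon)T^{\theta}\big)^{k}\,
\frac{(\Gamma(\theta))^{k}}{\Gamma(k\theta+1)}\,\|w^{1}-w^{0}\|^{p}_{\mathcal{H}^{\phi,\gamma+2}_{p}(T)},
\]
and the factor $\Gamma(k\theta+1)$ in the denominator (the Mittag-Leffler-type gain from composing the fractional kernel with itself) makes the sum decay geometrically in $i$ once $\varepsilon$ is fixed small, with no smallness requirement on $T$ and no restart. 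The same mechanism (generalized Gronwall for the kernel $(t-s)^{\theta-1}$) then closes both the uniqueness argument and the final a priori estimate, as you correctly outline in your last paragraph.
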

	
	\begin{proof}
		\emph{Case of linear functions.} When $g,h,f$ are independent of $w$, i.e., $g(t,x,w)=g(t,x):=g(0)$, $h(t,x,w)=h(t,x):=h(0)$, and $f(t,x,w)=f(t,x):=f(0)$, Theorem \ref{main theorem 1} follows directly from Lemma \ref{linear equation}.
		
		\emph{Case of nonlinear functions.}
		Consider the equation
		\[
		\partial_t^\alpha w = \phi(\Delta)w +g(w)+\sum_{k=1}^{\infty}\partial_t^{\sigma_{1}}\int_{0}^{t}h^{k}(w)\,dB^{k}_{s}
		+\sum_{k=1}^{\infty}\partial_t^{\sigma_{2}}\int_{0}^{t}f^{k}(w)\,dZ^{k}_{s},\quad t>0;
		\]
		with initial condition $w(0) =\textbf{I}_{\alpha p>1}w_{0}$.
		
		Let $w_{1},w_{2}\in\mathcal{H}^{\phi,\gamma+2}_{p}(T)$ be two solutions to the above equation. Then $\tilde{w}=w_{1}-w_{2}$ satisfies
		\begin{align*}
			\partial_t^\alpha \tilde{w} &= \phi(\Delta)\tilde{w} +g(w_{1})-g(w_{2})+\sum_{k=1}^{\infty}\partial_t^{\sigma_{1}}\int_{0}^{t}[h^{k}(w_{1})-h^{k}(w_{2})]\,dB^{k}_{s}\\
			&\quad+\sum_{k=1}^{\infty}\partial_t^{\sigma_{2}}\int_{0}^{t}[f^{k}(w_{1})-f^{k}(w_{2})]\,dZ^{k}_{s},\quad t>0;
		\end{align*}
		with $\tilde{w}(0) =0$.
		
		Applying Lemma \ref{linear equation SPDE}, Assumption \ref{assumption of nonlinear function}, and Remark \ref{some usuful remark}, we derive that for any $t\leq T$,
		\begin{align*}
			&\|\tilde{w}\|^{p}_{\mathscr{H}_{p}^{\phi,\gamma+2}(t)}\\
			&\leq C\big(\|g(w_{1})-g(w_{2})\|^{p}_{\mathscr{H}_{p}^{\phi,\gamma}(t)}
			+\|h(w_{1})-h(w_{2})\|^{p}_{\mathscr{H}_{p}^{\phi,\gamma+\delta_{0}}(t,l_{2})}\\
			&\quad+\|f(w_{1})-f(w_{2})\|^{p}_{\mathscr{H}_{p}^{\phi,\gamma+\delta_{1}}(t,l_{2},d_{1})}\big)\\
			&\lesssim_{T}\varepsilon^{p}\|w_{1}-w_{2}\|^{p}_{\mathscr{H}^{\phi,\gamma+2}_{p}(t)}
			+N(\varepsilon)\|w_{1}-w_{2}\|^{p}_{\mathscr{H}^{\phi,\gamma}_{p}(t)}\\
			&\lesssim_{T}\varepsilon^{p}\|w_{1}-w_{2}\|^{p}_{\mathscr{H}^{\phi,\gamma+2}_{p}(t)}
			+N(\varepsilon)\int_{0}^{t}(t-s)^{\theta-1}\big(\|\phi(\Delta)\tilde{w}\|^{p}_{\mathscr{H}_{p}^{\phi,\gamma}(s)}\\
			&\quad+\|g(w_{1})-g(w_{2})\|^{p}_{\mathscr{H}_{p}^{\phi,\gamma}(s)}
			+\|h(w_{1})-h(w_{2})\|^{p}_{\mathscr{H}_{p}^{\phi,\gamma+\delta_{0}}(s,l_{2})}\\
			&\quad+\|f(w_{1})-f(w_{2})\|^{p}_{\mathscr{H}_{p}^{\phi,\gamma+\delta_{1}}(s,l_{2},d_{1})}\big)\,ds\\
			&\lesssim_{T}\varepsilon^{p}\|w_{1}-w_{2}\|^{p}_{\mathscr{H}^{\phi,\gamma+2}_{p}(t)}
			+N(\varepsilon)\int_{0}^{t}(t-s)^{\theta-1}\|w_{1}-w_{2}\|_{\mathscr{H}_{p}^{\phi,\gamma+2}(s)}\,ds.
		\end{align*}
		Then by the generalized Gronwall's inequality, for any $t\leq T$, $\|w_{1}-w_{2}\|_{\mathscr{H}_{p}^{\phi,\gamma+2}(t)}=0$, which establishes uniqueness.
		
		Next, we prove existence and the a priori estimate. Let $w^{0}\in \mathcal{H}^{\phi,\gamma+2}_{p}(T)$ be the unique solution to \eqref{TSFSPDE} with linear functions. For any $i\geq 0$, define $w^{i+1}\in\mathcal{H}^{\phi,\gamma+2}_{p}(T)$ by
		\begin{align}\label{solution sequence}
			\partial^{\alpha}_{t}w^{i+1}&=\phi(\Delta)w^{i+1}+g(w^{i})+\sum_{k=1}^{\infty}\partial_t^{\sigma_{1}}\int_{0}^{t}h^{k}(w^{i})\,dB^{k}_{s}
			\notag\\
			&\quad+\sum_{k=1}^{\infty}\partial_t^{\sigma_{2}}\int_{0}^{t}f^{k}(w^{i})\,dZ^{k}_{s},\quad
			w^{i}(0) =\textbf{I}_{\alpha p>1}w_{0}.
		\end{align}
		Then $\tilde{w}^{i}=w^{i+1}-w^{i}\in\mathcal{H}^{\phi,\gamma+2}_{p}(T)$ satisfies
		\begin{align*}
			\partial^{\alpha}_{t}\tilde{w}^{i}&=\phi(\Delta)\tilde{w}^{i}+g(w^{i})-g(w^{i-1})
			+\sum_{k=1}^{\infty}\partial_t^{\sigma_{1}}\int_{0}^{t}[h^{k}(w^{i})-h^{k}(w^{i-1})]\,dB^{k}_{s}\\
			&\quad+\sum_{k=1}^{\infty}\partial_t^{\sigma_{2}}\int_{0}^{t}[f^{k}(w^{i})-f^{k}(w^{i-1})]\,dZ^{k}_{s},\quad
			\tilde{w}^{i}(0,\cdot)=0.
		\end{align*}
		Applying Theorem \ref{fuzhuyinli}, Assumption \ref{assumption of nonlinear function} and Remark \ref{some usuful remark}, for any $t\leq T$, we derive
		\begin{align*}
			&\|w^{i+1}-w^{i}\|^{p}_{\mathcal{H}^{\phi,\gamma+2}_{p}(t)}\\
			&\leq C\big(\|g(w^{i})-g(w^{i-1})\|^{p}_{\mathscr{H}^{\phi,\gamma}_{p}(t)}
			+\|h(w^{i})-h(w^{i-1})\|^{p}_{\mathscr{H}^{\phi,\gamma+\delta_{0}}_{p}(t)}\\
			&\quad+\|f(w^{i})-f(w^{i-1})\|^{p}_{\mathscr{H}^{\phi,\gamma+\delta_{1}}_{p}(t)}\big)\\
			&\leq C\varepsilon^{p}\|w^{i}-w^{i-1}\|^{p}_{\mathscr{H}^{\phi,\gamma+2}_{p}(t)}
			+CN(\varepsilon)\|w^{i}-w^{i-1}\|^{p}_{\mathscr{H}^{\phi,\gamma}_{p}(t)}.
		\end{align*}
		Taking $\varepsilon=1$, this implies $\|w^{i+1}-w^{i}\|^{p}_{\mathcal{H}^{\phi,\gamma+2}_{p}(t)}\leq C\|w^{i}-w^{i-1}\|^{p}_{\mathcal{H}^{\phi,\gamma+2}_{p}(t)}$. On the other hand, note that
		\begin{align*}
			&\|w^{i}-w^{i-1}\|^{p}_{\mathscr{H}^{\phi,\gamma}_{p}(t)}\\
			&\leq C\int_{0}^{t}(t-s)^{\theta-1}\big(\|\phi(\Delta)(w^{i}-w^{i-1})\|^{p}_{\mathscr{H}^{\phi,\gamma}(s)}
			+\|g(w^{i-1})-g(w^{i-2})\|^{p}_{\mathscr{H}^{\phi,\gamma}_{p}(s)}\\
			&\quad+\|h(w^{i-1})-h(w^{i-2})\|^{p}_{\mathscr{H}^{\phi,\gamma+\delta_{0}}_{p}(s,l_{2})}\\
			&\quad+\|f(w^{i-1})-f(w^{i-2})\|^{p}_{\mathscr{H}^{\phi,\gamma+\delta_{0}}_{p}(s,l_{2},d_{1})}\,ds\big)\\
			&\leq C\int_{0}^{t}(t-s)^{\theta-1}\|w^{i-1}-w^{i-2}\|^{p}_{\mathscr{H}^{\phi,\gamma+2}_{p}(s)}\,ds,
		\end{align*}
		where the constant $C$ depends on $\varepsilon$. For any $t\leq T$,
		\begin{align*}
			&\|w^{2i+1}-w^{2i}\|^{p}_{\mathcal{H}^{\phi,\gamma}_{p}(t)}\\
			&\leq C\varepsilon^{p}\|w^{2i-1}-w^{2i-2}\|^{p}_{\mathcal{H}^{\phi,\gamma+2}_{p}(t)}
			+CN(\varepsilon)\int_{0}^{t}(t-s)^{\theta-1}\|w^{2i-1}-w^{2i-2}\|^{p}_{\mathcal{H}^{\phi,\gamma+2}_{p}(s)}\,ds\\
			&\leq C\varepsilon^{p}\bigg(\varepsilon^{p}\|w^{2i-3}-w^{2i-4}\|^{p}_{\mathcal{H}^{\phi,\gamma+2}_{p}(t)}
			+CN(\varepsilon)\int_{0}^{t}(t-s)^{\theta-1}\|w^{2i-3}-w^{2i-4}\|^{p}_{\mathcal{H}^{\phi,\gamma+2}_{p}(s)}\,ds\bigg)\\
			&\quad+CN(\varepsilon)\varepsilon^{p}\int_{0}^{t}(t-s)^{\theta-1}\|w^{2i-3}-w^{2i-4}\|^{p}_{\mathcal{H}^{\phi,\gamma+2}_{p}(s)}\,ds\\
			&\quad+(CN(\varepsilon))^{2}\int_{0}^{t}\int_{0}^{s}(t-s)^{\theta-1}(s-r)^{\theta-1}
			\|w^{2i-3}-w^{2i-4}\|^{p}_{\mathcal{H}^{\phi,\gamma+2}_{2}(r)}\,dr\,ds\\
			&\leq\cdots\\
			&\leq \sum_{k=0}^{i}C^{k}_{i}(\varepsilon^{p})^{i-k}(CN(\varepsilon)t^{\theta})^{k}\frac{(\Gamma(\theta))^{k}}{\Gamma(k\theta+1)}
			\|w^{1}-w^{0}\|^{p}_{\mathcal{H}^{\phi,\gamma+2}_{2}(t)}\\
			&\leq 2^{p}\varepsilon^{pi}\max_{k}\left(\frac{(\varepsilon^{-p}CN(\varepsilon)T^{\theta}\Gamma(\theta))^{k}}{\Gamma(k\theta+1)}\right)
			\|w^{1}-w^{0}\|^{p}_{\mathcal{H}^{\phi,\gamma+2}_{2}(T)}.
		\end{align*}
		Taking $\varepsilon<\frac{1}{8}$ and noting that the above maximum is finite, this implies that $\{w^{i}\}$ is a Cauchy sequence in $\mathcal{H}^{\phi,\gamma+2}_{p}(T)$. Taking $i\rightarrow\infty$ in \eqref{solution sequence}, we obtain that $w$ is a solution to \eqref{TSFSPDE} in the sense of Definition \ref{solution define}.
		
		Finally, we verify the a priori estimate. Note that $(w-w^{0})(0,\cdot)=0$, and combining with Lemma \ref{fuzhuyinli}, for each $t\leq T$,
		\begin{align*}
			\|w\|^{p}_{\mathcal{H}^{\phi,\gamma+2}_{p}(t)}&\leq\|w-w^{0}\|^{p}_{\mathcal{H}^{\phi,\gamma+2}_{p}(T)}
			+\|w^{0}\|^{p}_{\mathcal{H}^{\phi,\gamma+2}_{p}(t)}\\
			&\leq\|g(w)-g(0)\|^{p}_{\mathscr{H}^{\phi,\gamma}_{p}(t)}+\|f(w)-f(0)\|^{p}_{\mathscr{H}^{\phi,\gamma+\delta_{0}}_{p}(t,l_{2})}\\
			&\quad+\|h(w)-h(0)\|^{p}_{\mathscr{H}^{\phi,\gamma+\delta_{1}}_{p}(t,l_{2},d_{1})}+\|w^{0}\|^{p}_{\mathcal{H}^{\phi,\gamma+2}_{p}(t)}\\
			&\leq\varepsilon^{p}\|w\|^{p}_{\mathcal{H}^{\phi,\gamma+2}_{p}(t)}+C(\varepsilon)\|w\|^{p}_{\mathcal{H}^{\phi,\gamma}_{p}(t)}
			+\|w^{0}\|^{p}_{\mathcal{H}^{\phi,\gamma+2}_{p}(t)}.
		\end{align*}
		Taking $\varepsilon=\frac{1}{2}$, we derive
		\begin{align*}
			&\|w\|^{p}_{\mathcal{H}^{\phi,\gamma+2}_{p}(t)}\\
			&\lesssim_{T}\|w^{0}\|^{p}_{\mathcal{H}^{\phi,\gamma+2}_{p}(t)}
			+\|w-w^{0}\|^{p}_{\mathscr{H}^{\phi,\gamma}_{p}(t)}\\
			&\lesssim_{T}\|w^{0}\|^{p}_{\mathcal{H}^{\phi,\gamma+2}_{p}(t)}+\int_{0}^{t}(t-s)^{\theta-1}
			\big(\|\phi(\Delta)(w-w^{0})\|^{p}_{\mathscr{H}^{\phi,\gamma}_{p}(s)}\\
			&\quad+\|g(w)-g(w^{0})\|^{p}_{\mathscr{H}^{\phi,\gamma}_{p}(s)}
			+\|h(w)-h(w^{0})\|^{p}_{\mathscr{H}^{\phi,\gamma}_{p}(s,l_{2})}\\
			&\quad+\|f(w)-f(w^{0})\|^{p}_{\mathscr{H}^{\phi,\gamma}_{p}(s,l_{2},d_{1})}\big)\,ds\\
			&\lesssim_{T}\|w^{0}\|^{p}_{\mathcal{H}^{\phi,\gamma+2}_{p}(t)}
			+\int_{0}^{t}(t-s)^{\theta-1}\|w\|^{p}_{\mathcal{H}^{\phi,\gamma+2}_{p}(s)}\,ds.
		\end{align*}
		Applying Gronwall's inequality and noting that
		\[
		\|w^{0}\|_{\mathcal{H}^{\phi,\gamma+2}_{p}(T)}^{p}\leq C\bigg(\textbf{I}_{\alpha p>1}\|w_{0}\|^{p}_{\mathbb{B}^{\phi,\gamma+2-\frac{2}{\alpha p}}_{p,p}}+\|g(0)\|^{p}_{\mathscr{H}^{\phi,\gamma}_{p}(T)}+\|h(0)\|^{p}_{\mathscr{H}_{p}^{\phi,\gamma+\delta_{0}}(T,l_{2})}
		+\|f(0)\|^{p}_{\mathscr{H}_{p}^{\phi,\gamma+\delta_{1}}(T,l_{2},d_{1})}\bigg),
		\]
		we complete the proof.
	\end{proof}
Next, we apply the regularity result of Theorem~\ref{main theorem 1} to the NLSPDE \eqref{TSFSPDE1} on $\mathbb{R}^{d}$, that is the model driven by L\'{e}vy space-time white noise:
\begin{align*}
	\partial_t^\alpha w = \phi(\Delta)w +g(w)+\partial^{\sigma_{2}-1}_{t}\eta(w)\mathcal{\dot{Z}},\quad t>0,x\in\mathbb{R}^{d};
	w(0) =w_{0},~x\in\mathbb{R}^{d}.
\end{align*}
To apply the regularity result established in Theorem~\ref{main theorem 1}, we consider here that $\mathcal{Z}_{t,x}(\omega)$ is a cylindrical Wiener process. That is,~for an orthonormal basis $\{e_{k}(x)\}_{k \geq 0}$ of $L_{2}(\mathbb{R}^{d})$, $\mathcal{Z}_{t}$ admits the decomposition
$$
\mathcal{Z}_{t} = \sum_{k=1}^{\infty} \langle \mathcal{Z}_{t}, e_{k} \rangle e_{k}(x),
$$
where $\{\langle \mathcal{Z}_{t}, e_{k} \rangle\}_{k \geq 1}$ is a sequence of independent real-valued Wiener processes.

Consequently, for a function $X(s,x) = \xi(x) \mathbf{I}_{(\tau,\varsigma]}(t)$ with $\xi \in C^{\infty}_{0}(\mathbb{R}^{d})$ and $\tau, \varsigma$ being bounded stopping times, we employ the Walsh stochastic integral to obtain
\begin{align*}
	\int_{0}^{t} \int_{\mathbb{R}^{d}} X(s,x) \, dZ_{s} = \sum_{k=1}^{\infty} \int_{0}^{t} \int_{\mathbb{R}^{d}} X(s,x) e_{k}(x) \, d\mathcal{Z}^{k}_{s},
\end{align*}
where $\mathcal{Z}^{k}_{t} = \langle \mathcal{Z}_{t}, e_{k} \rangle$.~Furthermore, it is worth noting that for a more general time-space L\'{e}vy process $\mathcal{Z}_{t,x}(\omega)$, we cannot ascertain the mutual independence of $\{\langle \mathcal{Z}_{t}, e_{k} \rangle\}_{k \geq 1}$; only their uncorrelatedness can be established \cite{Applebaum}.

Thus, the L\'{e}vy space-time white noise model \eqref{TSFSPDE1} is transformed into the following stochastic model with initial value $w_{0}$:
\begin{align}\label{phi xingshibiaoda}
	\partial_t^\alpha w = \phi(\Delta) w + g(w) + \sum_{k=1}^{\infty} \partial_t^{\sigma_{2}} \int_{0}^{t} \eta(w) e_{k}(x) \, d\mathcal{Z}^{k}_{s}.
\end{align}

We apply the result of Theorem~\ref{main theorem 1} to \eqref{phi xingshibiaoda}. First, we present the following lemma, see \cite[Lemma 5.2]{Kim3}.
	
	\begin{lemma}\label{FZLemma}
		Let $k_{0}\in(\frac{d}{2\kappa_{0}},\frac{d}{\kappa_{0}})$, $2\leq 2r\leq p$, and $2r<d/(d-k_{0}\kappa_{0})$. Suppose the function $J(\cdot,\cdot)$ satisfies
		\[
		|J(x,u)-J(x,w)|\lesssim \xi(x)|u-w|,\quad \forall x\in\mathbb{R}^{d}, u,w\in\mathbb{R}.
		\]
		Let $\mathcal{J}^{k}(x,w)=J(x,w)e_{k}(x)$. Then we have
		\[
		\|\mathcal{J}(u)-\mathcal{J}(w)\|_{H_{p}^{\phi,-k_{0}}(l_{2})}\lesssim\|\xi\|_{L_{\frac{2r}{r-1}}}\|u-w\|_{L_{p}}.
		\]
		In particular, for $\xi\in L_{\infty}$ and $r=1$, we have
		\[
		\|\mathcal{J}(u)-\mathcal{J}(w)\|_{H_{p}^{\phi,-k_{0}}(l_{2})}\lesssim\|u-w\|_{L_{p}}.
		\]
	\end{lemma}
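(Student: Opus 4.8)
The plan is to reduce the claim to a single convolution estimate against the $\phi$-Bessel potential kernel, using the orthonormality of $\{e_k\}$ to collapse the $l_2$-sum into an $L_2$-integral. Write $F(x):=J(x,u(x))-J(x,w(x))$ and $g:=|u-w|$, so the hypothesis gives $|F(x)|\lesssim\xi(x)g(x)$ and $\mathcal{J}^k(u)-\mathcal{J}^k(w)=F(x)e_k(x)$. Since $\phi(\Delta)=-\phi(-\Delta)$, the operator $(I-\phi(\Delta))^{-k_0/2}$ has symbol $(1+\phi(|\zeta|^2))^{-k_0/2}$; let $G_{k_0}:=\mathcal{F}^{-1}\big[(1+\phi(|\zeta|^2))^{-k_0/2}\big]$ be its convolution kernel. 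Applying the operator componentwise, the definition of the $H_p^{\phi,-k_0}(l_2)$-norm gives
\[
\|\mathcal{J}(u)-\mathcal{J}(w)\|_{H_p^{\phi,-k_0}(l_2)}^p=\int_{\mathbb{R}^d}\Big(\sum_{k=1}^\infty\Big|\int_{\mathbb{R}^d}G_{k_0}(x-y)F(y)e_k(y)\,dy\Big|^2\Big)^{p/2}dx.
\]

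The key structural step is Parseval's identity: because $\{e_k\}$ is an orthonormal basis of $L_2(\mathbb{R}^d)$, for fixed $x$ the inner sum equals $\big\|G_{k_0}(x-\cdot)F(\cdot)\big\|_{L_2}^2=\int_{\mathbb{R}^d}|G_{k_0}(x-y)|^2|F(y)|^2\,dy$, which I first justify for $F\in C_c^\infty$ (a dense class) together with the global bound $G_{k_0}\in L_2(\mathbb{R}^d)$. Writing $\Phi:=|G_{k_0}|^2$, this yields
\[
\|\mathcal{J}(u)-\mathcal{J}(w)\|_{H_p^{\phi,-k_0}(l_2)}^p\lesssim\big\|\Phi\star(\xi^2 g^2)\big\|_{L_{p/2}}^{p/2}.
\]

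The main obstacle is the kernel estimate $G_{k_0}\in L_{2r}(\mathbb{R}^d)$. By the lower scaling condition \eqref{lower scailing condition} one has $\phi(|\zeta|^2)\gtrsim|\zeta|^{2\kappa_0}$ for large $|\zeta|$, so $G_{k_0}$ behaves like the classical Bessel kernel of order $k_0\kappa_0$; quantitatively, kernel bounds of the type in \cite{Kim1,Kim2} give $|G_{k_0}(x)|\lesssim|x|^{-d}\big(\phi(|x|^{-2})\big)^{-k_0/2}\lesssim|x|^{k_0\kappa_0-d}$ for small $|x|$, with rapid decay as $|x|\to\infty$ produced by the $1+$ in the symbol. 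Here the constraint $k_0<d/\kappa_0$ keeps the exponent $k_0\kappa_0-d$ negative (an integrable singularity), $k_0>d/(2\kappa_0)$ forces $\int(1+\phi(|\zeta|^2))^{-k_0}\,d\zeta<\infty$ and hence $G_{k_0}\in L_2$ (used above), and the local singularity lies in $L_{2r}$ exactly when $2r(d-k_0\kappa_0)<d$, i.e. the stated hypothesis $2r<d/(d-k_0\kappa_0)$.

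Finally I combine Young's convolution inequality with Hölder. Choosing $a=r$ and $b$ with $1/b=(r-1)/r+2/p$, so that $1/a+1/b=1+2/p$, Young gives $\|\Phi\star(\xi^2g^2)\|_{L_{p/2}}\lesssim\|\Phi\|_{L_r}\|\xi^2g^2\|_{L_b}$; the requirement $b\ge1$ is precisely the hypothesis $2r\le p$. Hölder with exponents $r/(r-1)$ and $p/2$ then gives $\|\xi^2g^2\|_{L_b}\le\|\xi^2\|_{L_{r/(r-1)}}\|g^2\|_{L_{p/2}}=\|\xi\|_{L_{2r/(r-1)}}^2\|g\|_{L_p}^2$. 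Since $\|\Phi\|_{L_r}=\|G_{k_0}\|_{L_{2r}}^2<\infty$, taking the power $p/2$ yields
\[
\|\mathcal{J}(u)-\mathcal{J}(w)\|_{H_p^{\phi,-k_0}(l_2)}\lesssim\|\xi\|_{L_{2r/(r-1)}}\|u-w\|_{L_p},
\]
and a density argument removes the provisional assumption $F\in C_c^\infty$. The special case $\xi\in L_\infty$, $r=1$ (for which $2r/(r-1)=\infty$ and $2r<d/(d-k_0\kappa_0)$ reduces to $k_0>d/(2\kappa_0)$, so only $G_{k_0}\in L_2$ is needed) follows immediately.
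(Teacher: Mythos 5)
The paper itself contains no proof of Lemma \ref{FZLemma}: it is quoted directly from the literature (``see \cite[Lemma 5.2]{Kim3}''), so there is no in-paper argument to compare against. Your proposal is a correct reconstruction of the standard Krylov-type argument that the cited source uses. The Parseval collapse $\sum_{k}\bigl|\langle G_{k_0}(x-\cdot)F,e_k\rangle\bigr|^2=\int_{\mathbb{R}^d}|G_{k_0}(x-y)|^2|F(y)|^2\,dy$ is exactly the key structural step, and your exponent bookkeeping is right: Young with $1/r+1/b=1+2/p$ requires $b\ge 1$, i.e.\ $2r\le p$, and H\"older with exponents $r/(r-1)$ and $p/2$ produces $\|\xi\|_{L_{2r/(r-1)}}^2\|u-w\|_{L_p}^2$, with the $r=1$, $\xi\in L_\infty$ case reducing to $G_{k_0}\in L_2$, i.e.\ to $k_0>d/(2\kappa_0)$ via Plancherel.

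The one thin spot is the kernel estimate $G_{k_0}\in L_{2r}$. The pointwise bound $|G_{k_0}(x)|\lesssim|x|^{-d}\bigl(\phi(|x|^{-2})\bigr)^{-k_0/2}$ near the origin is true, but it is not obtainable from crude heat-kernel or symbol-splitting bounds in the regime $k_0>d$ (which the hypothesis $k_0<d/\kappa_0$ permits whenever $\kappa_0<1$): such arguments only yield the weaker exponent $\min(k_0,d)$, and the full range requires a Littlewood--Paley decomposition together with the symbol-derivative bounds \eqref{B.S.T,func.bdd} --- this is essentially \cite[Lemma 5.1]{Kim3}, rather than a bound stated in \cite{Kim1,Kim2}. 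You can avoid the issue entirely: by Hausdorff--Young, $\|G_{k_0}\|_{L_{2r}}\le\bigl\|(1+\phi(|\cdot|^2))^{-k_0/2}\bigr\|_{L_q}$ with $q=2r/(2r-1)\in(1,2]$, and since the lower scaling condition \eqref{lower scailing condition} gives $\phi(|\zeta|^2)\gtrsim|\zeta|^{2\kappa_0}$ for $|\zeta|\ge 1$, the right-hand side is finite precisely when $q\kappa_0 k_0>d$, which is equivalent to the stated hypothesis $2r<d/(d-k_0\kappa_0)$ (and to $k_0>d/(2\kappa_0)$ when $r=1$). With that substitution your proof is complete, self-contained, and arguably cleaner than an appeal to pointwise kernel bounds.
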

	
	\begin{theorem}\label{main theorem 2}
		Let $\kappa_{0}\in(\frac{1}{4},1]$, and denote that $f^{k}(t,x,w):=\eta(w)e_{k}$. Assume the functions $g$ and $\eta$ satisfy the following conditions:
		\begin{align*}
			|g(t,x,u)-g(t,x,w)|&\lesssim |u-w|,\\
			|\eta(t,x,u)-\eta(t,x,w)|&\lesssim \xi(t,x)|u-w|,
		\end{align*}
		where $\xi$ is a function of $(\omega,t,x)$. If the following conditions hold:
		\[
		\|\eta(0)\|_{\mathscr{L}_{p}(T)}+\|g(0)\|_{\mathscr{H}^{\phi,-k_{0}-\delta_{1}}_{p}(T)}+\sup_{t,\omega}\|\xi\|_{L_{2s}}<\infty,
		\]
		where the constants $k_{0}$ and $s$ satisfy
		\begin{align}\label{constant satisfy condition}
			\frac{d}{2\kappa_{0}}<k_{0}<\frac{d}{\kappa_{0}}\wedge\left(2-\frac{(2\sigma_{2}-2/p)_{+}}{\alpha}\right),\quad
			\frac{d}{2k_{0}\kappa_{0}-d}<s,
		\end{align}
		then Equation \eqref{TSFSPDE} admits a unique solution $w$ in $\mathcal{H}^{\phi,2-k_{0}-\delta_{1}}_{p}(T)$ satisfying the estimate
		\[
		\|w\|_{\mathcal{H}^{\phi,2-k_{0}-\delta_{1}}_{p}(T)}\lesssim_{T}\left(\textbf{I}_{\alpha p>1}\|w_{0}\|_{\mathbb{B}^{\phi,2-k_{0}-\delta_{1}-\frac{2}{\alpha p}}_{p,p}}+\|g(0)\|_{\mathscr{H}^{\phi,-k_{0}-\delta_{1}}_{p}(T)}
		+\|\eta(0)\|_{\mathscr{L}_{p}(T)}\right).
		\]
	\end{theorem}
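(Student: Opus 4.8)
The plan is to deduce the result directly from Theorem~\ref{main theorem 1} by making the right choice of regularity index and then verifying Assumption~\ref{assumption of nonlinear function}. Writing \eqref{phi xingshibiaoda} in the form of the linear template \eqref{linear equation SPDE}, I identify the drift with $g$, set the Wiener coefficient $h\equiv 0$, and take the coefficient of the $\mathcal{Z}^{k}$-term to be $f^{k}(t,x,w)=\eta(w)e_{k}(x)$. I then set $\gamma=-k_{0}-\delta_{1}$, so that the target space $\mathcal{H}^{\phi,\gamma+2}_{p}(T)=\mathcal{H}^{\phi,2-k_{0}-\delta_{1}}_{p}(T)$ and the initial space $\mathbb{B}^{\phi,\gamma+2-\frac{2}{\alpha p}}_{p,p}=\mathbb{B}^{\phi,2-k_{0}-\delta_{1}-\frac{2}{\alpha p}}_{p,p}$ match those in the statement. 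With this identification the whole theorem reduces to checking that $g$ and $f^{k}=\eta(\cdot)e_{k}$ satisfy the stochastic-Lipschitz bound of Assumption~\ref{assumption of nonlinear function} at level $\gamma=-k_{0}-\delta_{1}$.

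The two coefficient estimates are handled separately. For the drift, the pointwise bound $|g(t,x,u)-g(t,x,w)|\lesssim|u-w|$ gives $\|g(u)-g(w)\|_{L_{p}}\lesssim\|u-w\|_{L_{p}}$, and since $\gamma<0$ the embedding $L_{p}=H^{\phi,0}_{p}\hookrightarrow H^{\phi,\gamma}_{p}$ yields $\|g(u)-g(w)\|_{H^{\phi,\gamma}_{p}}\lesssim\|u-w\|_{L_{p}}$. For the noise coefficient I invoke Lemma~\ref{FZLemma} with $J=\eta$ and $\mathcal{J}^{k}=\eta(\cdot)e_{k}=f^{k}$: choosing $r$ through $2s=\tfrac{2r}{r-1}$, a short computation shows that the condition $\frac{d}{2k_{0}\kappa_{0}-d}<s$ of \eqref{constant satisfy condition} is exactly equivalent to $2r<\frac{d}{d-k_{0}\kappa_{0}}$, while $\frac{d}{2\kappa_{0}}<k_{0}<\frac{d}{\kappa_{0}}$ supplies the remaining hypotheses (for $p=2$ one takes $r=1$, $s=\infty$, $\xi\in L_{\infty}$). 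Since $\gamma+\delta_{1}=-k_{0}$, Lemma~\ref{FZLemma} then gives
\[
\|f(u)-f(w)\|_{H^{\phi,\gamma+\delta_{1}}_{p}(l_{2})}=\|\mathcal{J}(u)-\mathcal{J}(w)\|_{H^{\phi,-k_{0}}_{p}(l_{2})}\lesssim\|\xi\|_{L_{2s}}\|u-w\|_{L_{p}},
\]
which is finite and uniform in $(t,\omega)$ by the assumption $\sup_{t,\omega}\|\xi\|_{L_{2s}}<\infty$.

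It remains to upgrade these $L_{p}$-Lipschitz bounds into the split form $\varepsilon\|u-w\|_{H^{\phi,\gamma+2}_{p}}+N(\varepsilon)\|u-w\|_{H^{\phi,\gamma}_{p}}$ required by Assumption~\ref{assumption of nonlinear function}. This is where the dimensional constraint enters: I use the interpolation inequality for the $\phi$-Bessel scale together with Young's inequality, namely $\|v\|_{L_{p}}\le\varepsilon\|v\|_{H^{\phi,\gamma+2}_{p}}+N(\varepsilon)\|v\|_{H^{\phi,\gamma}_{p}}$, which is valid precisely when $0$ lies strictly between $\gamma$ and $\gamma+2$, i.e. when $0<k_{0}+\delta_{1}<2$. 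The upper bound $k_{0}+\delta_{1}<2$ is guaranteed by $k_{0}<2-\frac{(2\sigma_{2}-2/p)_{+}}{\alpha}=2-\delta_{1}$ in \eqref{constant satisfy condition}, and combined with $k_{0}>\frac{d}{2\kappa_{0}}$ this is exactly the dimensional condition $d<2\kappa_{0}\bigl(2-\frac{(2\sigma_{2}-2/p)_{+}}{\alpha}\bigr)$. Once Assumption~\ref{assumption of nonlinear function} is verified, Theorem~\ref{main theorem 1} produces the unique solution $w\in\mathcal{H}^{\phi,2-k_{0}-\delta_{1}}_{p}(T)$ together with its a priori estimate; the free terms on the right-hand side reduce to the stated ones because $h\equiv0$ and, applying Lemma~\ref{FZLemma} once more with the identity nonlinearity, $\|f(0)\|_{\mathscr{H}^{\phi,-k_{0}}_{p}(T,l_{2})}=\|\eta(0)e_{\cdot}\|\lesssim\|\eta(0)\|_{\mathscr{L}_{p}(T)}$. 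I expect the main obstacle to be the bookkeeping linking $s$, $r$ and the exponents in Lemma~\ref{FZLemma} and the legitimacy of the interpolation step, the latter being exactly what forces $k_{0}<2-\delta_{1}$; the genuinely stochastic content is entirely absorbed into Theorem~\ref{main theorem 1}.
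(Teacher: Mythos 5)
Your proposal is correct and follows essentially the same route as the paper: set $\gamma=-k_{0}-\delta_{1}$, verify Assumption \ref{assumption of nonlinear function} by combining the pointwise Lipschitz bound with the embedding $L_{p}\hookrightarrow H^{\phi,\gamma}_{p}$ and the interpolation inequality (which is exactly where $\gamma+2>0$, hence the dimensional constraint, enters), invoke Lemma \ref{FZLemma} with $s=\tfrac{r}{r-1}$ for the noise coefficient, and then apply Theorem \ref{main theorem 1}, bounding $\|F(0)\|_{\mathscr{H}^{\phi,-k_{0}}_{p}(T,l_{2})}$ by $\|\eta(0)\|_{\mathscr{L}_{p}(T)}$ via Lemma \ref{FZLemma} once more. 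Your write-up in fact makes explicit two points the paper leaves implicit, namely the equivalence of $s>\tfrac{d}{2k_{0}\kappa_{0}-d}$ with $2r<\tfrac{d}{d-k_{0}\kappa_{0}}$ and the precise role of the interpolation step.
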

	
	\begin{proof}
 Denote that $f(t,x,w)e_{k}(x):=F^{k}(t,x,w)$, we need to verify that Assumption \ref{assumption of nonlinear function} holds for $\gamma=-k_{0}-\delta_{1}$. Condition \eqref{constant satisfy condition} implies that $\gamma+2>0$, hence
		\begin{align*}
			\|g(t,x,w)-g(t,x,v)\|_{H^{\phi,\gamma}_{p}}&\lesssim\|g(t,x,w)-g(t,x,v)\|_{L_{p}}\\
			&\lesssim\|w-u\|_{L_{p}}\\
			&\lesssim \varepsilon\|w-u\|_{H^{\phi,\gamma+2}_{p}}+N(\varepsilon)\|w-u\|_{H^{\phi,\gamma}_{p}}.
		\end{align*}
		On the other hand, taking $s=\frac{r}{r-1}$, Condition \eqref{constant satisfy condition} implies that $2r<d/(d-k_{0}\kappa_{0})$. Using Lemma \ref{FZLemma}, we obtain
		\begin{align*}
			\|F(t,x,w)-F(t,x,v)\|_{H^{\phi,\gamma+\delta_{1}}_{p}(l_{2})}&\lesssim\|F(t,x,w)-F(t,x,v)\|_{H_{p}^{\phi,-k_{0}}(l_{2})}\\
			&\lesssim\|\xi\|_{L_{2s}}\|u-w\|_{L_{p}}\\
			&\lesssim\varepsilon\|w-u\|_{H^{\phi,\gamma+2}_{p}}+N(\varepsilon)\|w-u\|_{H^{\phi,\gamma}_{p}}.
		\end{align*}
		Therefore, by Theorem \ref{main theorem 1} and Lemma \ref{FZLemma}, there exists a unique $w\in\mathcal{H}^{\phi,\gamma+2}_{p}(T)$ satisfying the estimate
		\begin{align*}
			\|w\|_{\mathcal{H}^{\phi,2-k_{0}-\delta_{1}}_{p}(T)}&\leq C\left(\textbf{I}_{\alpha p>1}\|w_{0}\|_{\mathbb{B}^{\phi,2-k_{0}-\delta_{1}-\frac{2}{\alpha p}}_{p,p}}+\|g(0)\|_{\mathscr{H}^{\phi,-k_{0}-\delta_{1}}_{p}(T)}
			+\|F(0)\|_{\mathscr{H}_{p}^{\phi,-k_{0}}(T,l_{2})}\right)\\
			&\leq C\left(\textbf{I}_{\alpha p>1}\|w_{0}\|_{\mathbb{B}^{\phi,2-k_{0}-\delta_{1}-\frac{2}{\alpha p}}_{p,p}}+\|g(0)\|_{\mathscr{H}^{\phi,-k_{0}-\delta_{1}}_{p}(T)}
			+\|\eta(0)\|_{\mathscr{L}_{p}(T)}\right).
		\end{align*}
	\end{proof}
	
	\begin{remark}
		Theorem \rm\ref{main theorem 2} implies that we must have
		\[
		d<2\kappa_{0}\left(2-\frac{(2\sigma_{2}-2/p)_{+}}{\alpha}\right),\quad \sigma_{2}<\alpha\left(1-\frac{1}{4\kappa_{0}}\right)+\frac{1}{p}.
		\]
		Therefore, we can take
		\[
		d=\begin{cases}
			1,2,3, & \text{if } \sigma_{2}<\alpha\left(1-\frac{3}{4\kappa_{0}}+\frac{1}{p}\right),\\
			1, & \text{if } \alpha\left(1-\frac{1}{2\kappa_{0}}+\frac{1}{p}\right)<\sigma_{2}<\alpha\left(1-\frac{1}{4\kappa_{0}}\right)+\frac{1}{p}.
		\end{cases}
		\]
	\end{remark}
	\section{Local mild solution}
In this section,~we consider the case where $\mathcal{Z}_{t,x}$ is a general L\'{e}vy space-time white noise and establish the well-posedness of its mild solution in $L_{p}(\mathbb{R}^{d})$ ($1 \leq p \leq 2$) for NLSPDE \eqref{TSFSPDE1}. Recall the L\'{e}vy-It\^{o} decomposition, there exist $g_{1}, g_{2}: \mathbb{R}_{+} \times \mathbb{R}^{d} \times A \rightarrow \mathbb{R}$, a set $N_{0} \in \mathcal{B}(A)$ with $\mu(A \setminus N_{0}) < \infty$, such that
\begin{align*}
	\mathcal{Z}_{t,x}(\omega) = W_{t,x}(\omega) + \int_{N_{0}} g_{1}(t,x,\xi,\omega) \, \tilde{\Pi}(d\xi,\omega) + \int_{A \setminus N_{0}} g_{2}(t,x,\xi,\omega) \, \Pi(d\xi,\omega).
\end{align*}

Note that the mild solution of \eqref{TSFSPDE1} can be represented by the following integral equation:
\begin{align*}
	w(t,x) &= \mathcal{S}(t) \star w_{0}(x) + \int_{0}^{t} \mathcal{S}_{\alpha,1}(t-s) \star g(s,x,w(s,x)) \, ds \\
	&\quad + \int_{0}^{t} \int_{\mathbb{R}^{d}} \mathcal{S}_{\alpha,\sigma_{2}}(t-s, x-y) \eta(s,y,w(s,y)) \, W(ds,dy) \\
	&\quad + \int_{0}^{t} \int_{\mathbb{R}^{d}} \int_{N_{0}} \mathcal{S}_{\alpha,\sigma_{2}}(t-s, x-y) \eta(s,y,w(s,y)) g_{1}(s,y,\xi) \, \tilde{\Pi}(ds,dy,d\xi) \\
	&\quad + \int_{0}^{t} \int_{\mathbb{R}^{d}} \int_{A \setminus N_{0}} \mathcal{S}_{\alpha,\sigma_{2}}(t-s, x-y) \eta(s,y,w(s,y)) g_{2}(s,y,\xi) \, \Pi(ds,dy,d\xi),
\end{align*}
where we note that
\begin{align*}
	&\int_{0}^{t} \int_{\mathbb{R}^{d}} \int_{A \setminus N_{0}} \mathcal{S}_{\alpha,\sigma_{2}}(t-s, x-y) \eta(s,y,w(s,y)) g_{2}(s,y,\xi) \, \Pi(ds,dy,d\xi) \\
	&= \int_{0}^{t} \int_{\mathbb{R}^{d}} \int_{A \setminus N_{0}} \mathcal{S}_{\alpha,\sigma_{2}}(t-s, x-y) \eta(s,y,w(s,y)) g_{2}(s,y,\xi) \, \tilde{\Pi}(ds,dy,d\xi) \\
	&\quad + \int_{0}^{t} \int_{\mathbb{R}^{d}} \int_{A \setminus N_{0}} \mathcal{S}_{\alpha,\sigma_{2}}(t-s, x-y) \eta(s,y,w(s,y)) g_{2}(s,y,\xi) \, ds \, dy \, \mu(d\xi).
\end{align*}
Therefore, without loss of generality, we can introduce the following assumption:
\begin{assumption}\label{levy zao sheng jiashe}
	For each $\omega \in \Omega$, there exists a measurable function $h: \mathbb{R}_{+} \times \mathbb{R}^{d} \times A \rightarrow \mathbb{R}$ such that the L\'{e}vy space-time white noise $\mathcal{Z}_{t,x}$ admits the decomposition
	$$
	\mathcal{Z}_{t,x}(\omega) = W_{t,x}(\omega) + \int_{A} h(t,x,\xi,\omega) \, \tilde{\Pi}(d\xi,\omega).
	$$
\end{assumption}

Consequently, under Assumption \ref{levy zao sheng jiashe}, the mild solution of \eqref{TSFSPDE1} can be expressed by the following integral equation:
\begin{align}\label{phi-shikongbaizaosheng mild solution}
	w(t,x) &= \mathcal{S}(t) \star w_{0}(x) + \int_{0}^{t} \mathcal{S}_{\alpha,1}(t-s) \star g(s,x,w(s,x)) \, ds \notag \\
	&\quad + \int_{0}^{t} \int_{\mathbb{R}^{d}} \mathcal{S}_{\alpha,\sigma_{2}}(t-s, x-y) \eta(s,y,w(s,y)) \, W(ds,dy) \notag \\
	&\quad + \int_{0}^{t} \int_{\mathbb{R}^{d}} \int_{A} \mathcal{S}_{\alpha,\sigma_{2}}(t-s, x-y) \eta(s,y,w(s,y)) h(s,y,\xi) \, \tilde{\Pi}(ds,dy,d\xi).
\end{align}
Furthermore, if $\mathcal{Z}_{t,x}$ is a pure-jump L\'{e}vy space-time white noise, then the mild solution of \eqref{TSFSPDE1} is given by
\begin{align}\label{jump phi-shikongbaizaosheng mild solution}
	w(t,x) &= \mathcal{S}(t) \star w_{0}(x) + \int_{0}^{t} \mathcal{S}_{\alpha,1}(t-s) \star g(s,x,w(s,x)) \, ds \notag \\
	&\quad + \int_{0}^{t} \int_{\mathbb{R}^{d}} \int_{A} \mathcal{S}_{\alpha,\sigma_{2}}(t-s, x-y) \eta(s,y,w(s,y)) h(s,y,\xi) \, \tilde{\Pi}(ds,dy,d\xi).
\end{align}

\begin{definition}[Local mild solution]
	Let $T>0$, and consider an $\mathcal{F}_{t}$-adapted stochastic process $w:[0,T]\times\mathbb{R}^{d}\rightarrow\mathbb{R}$ which is c\`{a}dl\`{a}g in $t$. If there exists an $\mathcal{F}_{t}$-stopping time $\upsilon:\Omega\rightarrow [0,T]$ such that $\{w(t,x)\}_{t\leq \upsilon}$ satisfies \eqref{phi-shikongbaizaosheng mild solution}~{\rm(resp.\eqref{jump phi-shikongbaizaosheng mild solution})}, then we say $w$ is a local mild solution of \eqref{TSFSPDE1} driven by L\'{e}vy time-space white noise ~{\rm(resp.~pure jump L\'{e}vy noise)}. Moreover, if for any other mild solution $v$ with stopping time $\tilde{\upsilon}$, we have $w(t,x)=v(t,x)$ almost surely for all $t\in [0,\upsilon\wedge\tilde{\upsilon}]\times\mathbb{R}^{d}$, then we say the mild solution is unique.
\end{definition}

The following lemma is crucial in establishing the mild solution.
\begin{lemma}[\cite{Wu}]\label{gujibudengshi}
	Let $1\leq p\leq 2$, $\phi:[0,\infty)\times\mathbb{R}^{d}\times \Omega\rightarrow\mathbb{R}$ is a $\mathcal{F}_{t}$-adapted function, if
	$$
	\int_{0}^{t}\int_{\mathbb{R}^{d}}\int_{A}\mathbb{E}[|\phi(s,x,\xi)|^{p}]\,ds\,dx\,\mu(\,d\xi)<\infty,
	$$
	then
	$$
	\int_{0}^{t}\int_{\mathbb{R}^{d}}\int_{A}\phi(s,x,\xi)\tilde{\Pi}(\,ds,\,dx,\,d\xi)
	$$
	is well-defined in $L_{p}(\Omega,\mathcal{F},\mathbb{P})$, and the following hohd:
	$$
	\mathbb{E}\bigg[\bigg|\int_{0}^{t}\int_{\mathbb{R}^{d}}\int_{A}\phi(s,x,\xi)\tilde{\Pi}(\,ds,\,dx,\,d\xi)\bigg|^{p}\bigg]\lesssim
	\int_{0}^{t}\int_{\mathbb{R}^{d}}\int_{A}\mathbb{E}[|\phi(s,x,\xi)|^{p}]\,ds\,dx\,\mu(\,d\xi).
	$$
\end{lemma}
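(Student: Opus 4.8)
The statement is a $p$-th moment inequality of Bichteler--Jacod/Novikov type for stochastic integrals against the compensated Poisson random measure $\tilde{\Pi}$ in the range $1\leq p\leq 2$, where the mark space is effectively $\mathbb{R}^{d}\times A$ equipped with the product measure $dx\otimes\mu(d\xi)$. The plan is to first establish both the well-definedness and the estimate for elementary (simple) predictable integrands, and then extend to general $\phi$ by a completeness argument in which the a priori estimate itself is what furnishes the bounded continuous extension. The analytic heart of the argument is the Burkholder--Davis--Gundy inequality combined with the subadditivity of $r\mapsto r^{p/2}$ on $[0,\infty)$, which holds precisely because $p/2\leq 1$; this is the exact point where the hypothesis $p\leq 2$ is used.

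First I would treat a simple integrand $\phi=\sum_{j}c_{j}\mathbf{I}_{(\tau_{j},\tau_{j+1}]}(s)\mathbf{I}_{E_{j}}(x,\xi)$, with $c_{j}$ being $\mathcal{F}_{\tau_{j}}$-measurable, $\tau_{j}$ bounded stopping times, and $E_{j}\in\mathcal{B}(\mathbb{R}^{d})\otimes\mathcal{B}(A)$ of finite $dx\otimes\mu$-measure. For such $\phi$ the integral reduces to a finite sum and defines a càdlàg martingale $M_{t}=\int_{0}^{t}\int_{\mathbb{R}^{d}}\int_{A}\phi\,\tilde{\Pi}(ds,dx,d\xi)$ whose only jumps occur at the jump times of $\Pi$, with $\Delta M_{s}$ equal to the value of $\phi$ at the corresponding mark (the compensator term $ds\,dx\,\mu(d\xi)$ is continuous in $t$ and contributes no jumps). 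Consequently $M$ is purely discontinuous and its quadratic variation is the pure-jump process
\[
[M]_{t}=\sum_{s\leq t}\lvert\Delta M_{s}\rvert^{2}=\int_{0}^{t}\int_{\mathbb{R}^{d}}\int_{A}\lvert\phi(s,x,\xi)\rvert^{2}\,\Pi(ds,dx,d\xi).
\]

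Then I would run the following chain. By the $L_{p}$ Burkholder--Davis--Gundy inequality (valid for all $0<p<\infty$, including $p=1$ via Davis' inequality), $\mathbb{E}\bigl[\lvert M_{t}\rvert^{p}\bigr]\lesssim\mathbb{E}\bigl[[M]_{t}^{p/2}\bigr]$. Since $p/2\leq 1$, the map $r\mapsto r^{p/2}$ is subadditive, so $\bigl(\sum_{s}\lvert\Delta M_{s}\rvert^{2}\bigr)^{p/2}\leq\sum_{s}\lvert\Delta M_{s}\rvert^{p}$, which gives
\[
[M]_{t}^{p/2}\leq\int_{0}^{t}\int_{\mathbb{R}^{d}}\int_{A}\lvert\phi(s,x,\xi)\rvert^{p}\,\Pi(ds,dx,d\xi).
\]
Taking expectations and applying the compensation formula $\mathbb{E}\int\cdots\Pi=\mathbb{E}\int\cdots ds\,dx\,\mu(d\xi)$ to the nonnegative predictable integrand $\lvert\phi\rvert^{p}$ yields exactly the claimed bound for simple $\phi$.

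Finally, for the general case I would pick simple integrands $\phi_{n}$ converging to $\phi$ in the norm $\lVert\psi\rVert_{p}^{p}:=\int_{0}^{t}\int_{\mathbb{R}^{d}}\int_{A}\mathbb{E}[\lvert\psi\rvert^{p}]\,ds\,dx\,\mu(d\xi)$, which is finite by hypothesis. Applying the estimate just proved to the differences $\phi_{n}-\phi_{m}$ shows $\{I_{t}(\phi_{n})\}$ is Cauchy in $L_{p}(\Omega)$, so its limit defines $I_{t}(\phi)$, and the inequality passes to the limit by continuity (or Fatou). The hard part will be this last step: for $p<2$ there is no Hilbert-space $L_{2}$ isometry to lean on, so one must verify that simple integrands are dense in the predictable $L_{p}$-space and that the limit is independent of the approximating sequence. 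This is precisely where the a priori $p$-th moment estimate does the essential work, since it makes $\phi\mapsto I_{t}(\phi)$ a continuous linear map on a dense subspace, uniquely extendable to the whole space; the remaining verification that the extension coincides with the pathwise integral when the latter exists is routine.
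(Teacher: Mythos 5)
Your proof is correct. Note, though, that the paper contains no proof of this lemma to compare against: it is imported verbatim from the cited reference \cite{Wu}, and the route you take --- reduce to simple predictable integrands, identify the quadratic variation $[M]_t=\int|\phi|^2\,\Pi(ds,dx,d\xi)$ of the resulting pure-jump martingale, apply the Burkholder--Davis--Gundy inequality (valid since $p\geq 1$), exploit the subadditivity of $r\mapsto r^{p/2}$ (the one place where $p\leq 2$ enters), conclude with the compensation formula, and then extend to general integrands by density using the a priori bound itself --- is precisely the standard Bichteler--Jacod argument by which this inequality is established in that source. One small caveat: your parenthetical claim that BDG holds for all $0<p<\infty$ is false for c\`adl\`ag martingales with jumps (the upper bound fails for $0<p<1$, as the compensated Poisson process at small times shows), but this is harmless here because the lemma only concerns $1\leq p\leq 2$.
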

\begin{lemma}\label{keyestimateformildsolution}
	Let $\alpha d<4\delta$ or $1\leq p<\frac{\alpha d}{\alpha d-4\kappa_{0}}$,$\beta\in\mathbb{R}$,there exist constant $C=C(\alpha,\beta,\kappa_{0},p)$ such that
	\begin{align*}
		\int_{\mathbb{R}^{d}}\big|\mathcal{S}_{\alpha,\beta}(t,x)\big|^{p}\,dx\lesssim t^{(\alpha-\beta)p}\big(\phi^{-1}(t^{-\alpha})\big)^{\frac{d}{2}(p-1)}.
	\end{align*}
\end{lemma}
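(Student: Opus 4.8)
The plan is to exploit the self-similar structure of $\mathcal{S}_{\alpha,\beta}(t,\cdot)$, whose natural length scale is $R_t:=(\phi^{-1}(t^{-\alpha}))^{-1/2}$, the value of $|x|$ at which $t^{\alpha}\phi(|x|^{-2})\simeq 1$. I would split the integral as $\int_{\mathbb R^{d}}=\int_{|x|\ge R_t}+\int_{|x|<R_t}$ and estimate each region with the appropriate bound from Lemma~\ref{Some proposition of S}. On the far region $\{|x|\ge R_t\}$, where $t^{\alpha}\phi(|x|^{-2})\le 1$, I would apply \eqref{daoshu estimate 1} with $k=0$, i.e. $|\mathcal{S}_{\alpha,\beta}(t,x)|\lesssim t^{2\alpha-\beta}\phi(|x|^{-2})/|x|^{d}$. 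Passing to polar coordinates and using the lower scaling \eqref{lower scailing condition} to bound $\phi(|x|^{-2})\lesssim (R_t/|x|)^{2\kappa_0}t^{-\alpha}$ for $|x|\ge R_t$, the radial integral $\int_{R_t}^{\infty}r^{\,d-1-dp-2\kappa_0 p}\,dr$ converges at infinity for every $p\ge 1$ (since $p(d+2\kappa_0)>d$), and collecting the powers of $t$ and $R_t$ yields exactly $t^{(\alpha-\beta)p}R_t^{\,d(1-p)}=t^{(\alpha-\beta)p}(\phi^{-1}(t^{-\alpha}))^{\frac{d}{2}(p-1)}$.

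On the near region $\{|x|<R_t\}$, where $t^{\alpha}\phi(|x|^{-2})\ge 1$, I would invoke the refined bound \eqref{daoshu estimate 3} with $k=0$ and first dispose of the inner integral $\int_{(\phi(|x|^{-2}))^{-1}}^{2t^{\alpha}}(\phi^{-1}(\varrho^{-1}))^{d/2}\,d\varrho$. Its integrand equals $|x|^{-d}$ at the lower endpoint $\varrho_{\ast}:=(\phi(|x|^{-2}))^{-1}$, and deriving the scaling of $\phi^{-1}$ from \eqref{lower scailing condition} in the form $\phi^{-1}(\varrho^{-1})\lesssim(\varrho_{\ast}/\varrho)\,|x|^{-2}$ for $\varrho\ge\varrho_{\ast}$ shows that for $d\ge 2$ the integral is controlled by its lower endpoint, giving the pointwise estimate $|\mathcal{S}_{\alpha,\beta}(t,x)|\lesssim t^{-\beta}|x|^{-d}(\phi(|x|^{-2}))^{-1}$. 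Raising to the $p$-th power, bounding $(\phi(|x|^{-2}))^{-1}\lesssim(|x|/R_t)^{2\kappa_0}t^{\alpha}$ once more by the lower scaling, and integrating in polar coordinates reproduces the same quantity $t^{(\alpha-\beta)p}R_t^{\,d(1-p)}$, provided the origin singularity $|x|^{(2\kappa_0-d)p}$ is locally integrable.

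The main obstacle is exactly this origin integrability, and it is where the hypothesis of the lemma is used: the radial integral $\int_0 r^{(2\kappa_0-d)p+d-1}\,dr$ converges precisely when $(d-2\kappa_0)p<d$, which reflects the dichotomy in the assumption (no restriction on $p$ when the effective dimension is subcritical, and an explicit upper bound on $p$ otherwise). When no singularity is present one can avoid the delicate endpoint analysis altogether and argue by interpolation: $\|\mathcal{S}_{\alpha,\beta}(t,\cdot)\|_{L_p}^{p}\le\|\mathcal{S}_{\alpha,\beta}(t,\cdot)\|_{L_\infty}^{p-1}\,\|\mathcal{S}_{\alpha,\beta}(t,\cdot)\|_{L_1}$, combining the $L_1$ bound \eqref{L1 estimate} with the $L_\infty$ bound $\|\mathcal{S}_{\alpha,\beta}(t,\cdot)\|_{L_\infty}\lesssim t^{\alpha-\beta}(\phi^{-1}(t^{-\alpha}))^{d/2}$ obtained from the same pointwise estimates. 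Throughout, the real care lies in the consistent two-sided use of the scaling of $\phi$ and of the induced scaling of $\phi^{-1}$, so that the powers of $t$ and $R_t$ produced in the two regions agree; this exponent book-keeping, rather than any single hard inequality, is the heart of the argument.
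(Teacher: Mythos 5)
Your far-field estimate (over $|x|\ge R_t$) is exactly the paper's: both use \eqref{daoshu estimate 1} with $k=0$, pass to polar coordinates, and invoke \eqref{lower scailing condition}; your version is in fact the more careful one, since you carry the exponent $\kappa_0$ correctly where the paper's displayed intermediate step implicitly takes $\kappa_0=1$. The problems are in the near-field region, and they are twofold. First, your claim that for $d\ge 2$ the inner integral $\int_{\varrho_*}^{2t^{\alpha}}(\phi^{-1}(\varrho^{-1}))^{d/2}\,d\varrho$, $\varrho_*=(\phi(|x|^{-2}))^{-1}$, is controlled by its lower endpoint is not right at the borderline: with your bound $\phi^{-1}(\varrho^{-1})\lesssim(\varrho_*/\varrho)|x|^{-2}$ the integrand decays like $\varrho^{-d/2}$, so lower-endpoint domination needs $d\ge 3$ (for $\phi(x)=x$ and $d=2$ one picks up a logarithm, and for $d=1$ the integral is governed by the upper endpoint $2t^{\alpha}$); using the sharper scaling $\phi^{-1}(\varrho^{-1})\lesssim(\varrho_*/\varrho)^{1/\kappa_0}|x|^{-2}$ the correct criterion is $d>2\kappa_0$, so your case split (singular regime versus bounded regime handled by $L_1$--$L_\infty$ interpolation) must be organized around $d\gtrless 2\kappa_0$, with the logarithmic case $d=2\kappa_0$ treated separately. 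Second, and more seriously: the convergence condition your argument produces, $(d-2\kappa_0)p<d$, is \emph{not} the hypothesis of the lemma, which reads ``$\alpha d<4\kappa_0$ or $p<\alpha d/(\alpha d-4\kappa_0)$''. Since $d/(d-2\kappa_0)<\alpha d/(\alpha d-4\kappa_0)$ whenever both are meaningful, your proof covers a strictly smaller range of $p$. Concretely, for $\phi(x)=x$ ($\kappa_0=1$), $d=5$, $\alpha=1/2$, $p=2$, the lemma's hypothesis holds ($\alpha d=2.5<4$), yet $(d-2\kappa_0)p=6>5=d$ and your near-field radial integral diverges at the origin; indeed in that regime the kernel satisfies $\mathcal{S}_{\alpha,\beta}(t,x)\asymp t^{-\beta}|x|^{2-d}$ near $x=0$, so the left-hand side of the lemma is actually infinite. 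Thus, as a proof of the statement as written, there is a gap: you establish the estimate only under a stronger hypothesis, and you assert (incorrectly) that your dichotomy ``reflects'' the stated assumption.

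For comparison, the paper's near-field argument is structurally different: it never forms a pointwise bound, but applies Minkowski's integral inequality to swap the $x$- and $\varrho$-integrations, integrating $x$ over the set $\{\varrho_*(x)\le\varrho\}$ of volume $\simeq(\phi^{-1}(\varrho^{-1}))^{-d/2}$; this requires no case distinction in $d$ and no endpoint analysis. Notably, if that Minkowski argument is run with the integrand actually supplied by Lemma \ref{Some proposition of S}, namely $(\phi^{-1}(\varrho^{-1}))^{d/2}t^{-\beta}$, it yields precisely your condition $(d-2\kappa_0)p<d$. The paper reaches its weaker stated hypothesis only because its proof replaces the integrand of \eqref{daoshu estimate 3} by $(\phi^{-1}(\varrho^{-1}))^{d/2}\varrho\, t^{-\alpha-\beta}$ (an extra factor $\varrho t^{-\alpha}$, plus a spurious $\alpha$ in the exponent of $r$ after scaling), a strengthening of Lemma \ref{Some proposition of S} that is not justified there and is false for generic $\beta$, as the heat-kernel example above shows. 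So your condition is arguably the correct one and the mismatch exposes a real defect in the paper's lemma and proof; but a complete submission would have to flag this discrepancy explicitly and fix the low-dimensional cases, rather than presenting the argument as a proof of the lemma as stated.
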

\begin{proof}
	Note that
	\begin{align*}
		\int_{\mathbb{R}^{d}}\big|\mathcal{S}_{\alpha,\beta}(t,x)\big|^{p}\,dx=\int_{|x|\geq (\phi^{-1}(t^{-\alpha}))^{-\frac{1}{2}}}
		\big|\mathcal{S}_{\alpha,\beta}(t,x)\big|^{p}\,dx+\int_{|x|<(\phi^{-1}(t^{-\alpha}))^{-\frac{1}{2}}}
		\big|\mathcal{S}_{\alpha,\beta}(t,x)\big|^{p}\,dx
	\end{align*}
	Basic Lemma \ref{Some proposition of S} and \eqref{lower scailing condition}, we derive
	\begin{align*}
		\int_{|x|\geq \big(\phi^{-1}(t^{-\alpha})\big)^{-\frac{1}{2}}}
		\big|\mathcal{S}_{\alpha,\beta}(t,x)\big|^{p}\,dx&\leq\int_{|x|\geq \big(\phi^{-1}(t^{-\alpha})\big)^{-\frac{1}{2}}}\big|t^{2\alpha-\beta}\frac{\phi(|x|^{-2})}{|x|^{d}}\big|^{p}\,dx\\
		&\leq\int_{(\phi^{-1}(t^{-\alpha}))^{-\frac{1}{2}}}^{\infty}t^{(2\alpha-\beta)p}\frac{|\phi(r^{-2})|^{p}}{r^{(p-1)d+1}}\,dr\\
		&\lesssim\int_{(\phi^{-1}(t^{-\alpha}))^{-\frac{1}{2}}}^{\infty}t^{(\alpha-\beta)p}
		\frac{(\phi^{-1}(t^{-\alpha}))^{-p}}{r^{1+2p+(p-1)d}}\,dr\\
		&\lesssim t^{(\alpha-\beta)p}\big(\phi^{-1}(t^{-\alpha})\big)^{\frac{(p-1)d}{2}}.
	\end{align*}
	By the Minkowski inequality and \eqref{lower scailing condition}, we derive
	\begin{align*}
		\int_{|x|<(\phi^{-1}(t^{-\alpha}))^{-\frac{1}{2}}}
		\big|\mathcal{S}_{\alpha,\beta}(t,x)\big|^{p}\,dx&\lesssim\int_{|x|<(\phi^{-1}(t^{-\alpha}))^{-\frac{1}{2}}}
		\bigg|\int_{(\phi(|x|^{-2}))^{-1}}^{2t^{\alpha}}(\phi^{-1}(r^{-1}))^{\frac{d}{2}}rt^{-\alpha-\beta}\,dr\bigg|^{p}\,dx\\
		&\lesssim\bigg[\int_{0}^{2t^{\alpha}}\bigg(\int_{(\phi(|x|^{-1}))^{-1}\leq r}\big|(\phi^{-1}(r^{-1}))^{\frac{d}{2}}rt^{-\alpha-\beta}\big|^{p}\,dx\bigg)^{\frac{1}{p}}\,dr\bigg]^{p}\\
		&\lesssim\bigg(\int_{0}^{2t^{\alpha}}(\phi^{-1}(r^{-1}))^{\frac{d}{2}(\frac{p-1}{p})}rt^{-\alpha-\beta}\,dr\bigg)^{p}\\
		&\lesssim t^{-(\alpha+\beta)p}\bigg(\int_{0}^{2t^{\alpha}}(\phi^{-1}(t^{-\alpha}))^{\frac{d}{2}(\frac{p-1}{p})}t^{\frac{\alpha d}{2\kappa_{0}}(\frac{p-1}{p})}r^{1-\frac{\alpha d}{2\kappa_{0}}(\frac{p-1}{p})}\,dr\bigg)^{p}\\
		&\lesssim t^{(\alpha-\beta)p}\big(\phi^{-1}(t^{-\alpha})\big)^{\frac{(p-1)d}{2}}.
	\end{align*}
\end{proof}
\begin{theorem}\label{local mild solution theorem}
	Let $p\in [1,2]$, $T>0$, $\mathcal{Z}_{t,x}$ is a pure jump L\'{e}vy time-space white noise in NLSPDE \eqref{TSFSPDE1}, and assume the following condition hold:
	\[
	(\alpha-\sigma_{2})p+1>\frac{\alpha d}{2\kappa_{0}}(p-1).
	\]
We denote $\tilde{f}=\eta h$ and suppose there exist functions $\theta_{1},\theta_{2},\theta_{3}\in L_{p}(\mathbb{R}^{d})$ such that for any $(t,x,\xi)\in [0,T]\times\mathbb{R}^{d}\times A$, $z_{1},z_{2}\in\mathbb{R}$, the following estimates hold:
	\[
	|g(t,x,z)|\lesssim \theta_{1}(x)+|z|,\quad \int_{A}|\tilde{f}(t,x,\xi,z)|^{p}\,\mu(d\xi)\lesssim |\theta_{2}(x)|^{p}+|z|^{p},
	\]
	and
	\begin{align*}
		|g(t,x,z_{1})-g(t,x,z_{2})|&\lesssim (\theta_{3}(x)+|z_{1}|^{p-1}+|z_{2}|^{p-1})|z_{1}-z_{2}|,\\
		\int_{A}|\tilde{f}(t,x,\xi,z_{1})-\tilde{f}(t,x,\xi,z_{2})|^{p}\,dt\,dx\,\mu(d\xi)&\lesssim |z_{1}-z_{2}|^{p}.
	\end{align*}
	Let the $\mathcal{F}_{0}$-adapted process $w_{0}$ satisfy $\mathbb{E}[\|w_{0}\|_{L_{p}}^{p}]<\infty$, then equation \eqref{TSFSPDE} admits a unique local mild solution $w$ on $[0,T]\times\mathbb{R}^{d}$ which has a predictable modification, and satisfies
	\[
	\mathbb{E}[\|w(t\wedge\upsilon,\cdot)\|_{p}^{p}]<\infty.
	\]
\end{theorem}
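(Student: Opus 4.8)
The plan is to realize the solution as a fixed point of the integral operator $\mathcal{T}$ whose value is the right-hand side of \eqref{jump phi-shikongbaizaosheng mild solution}, working in the space $\mathcal{B}_T$ of $\mathcal{F}_t$-predictable, $L_p(\mathbb{R}^d)$-valued processes that are c\`{a}dl\`{a}g in $t$, equipped with $\|w\|_{\mathcal{B}_T}^p:=\sup_{t\le T}\mathbb{E}\|w(t,\cdot)\|_{L_p}^p$. First I would collect the three elementary bounds for $\mathcal{T}$. Since $\mathcal{S}(t,\cdot)$ is a transition probability density, Young's convolution inequality gives $\|\mathcal{S}(t)\star w_0\|_{L_p}\le\|w_0\|_{L_p}$, so the initial term lies in $\mathcal{B}_T$. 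For the drift term, Young's inequality together with the $L_1$-bound $\int_{\mathbb{R}^d}|\mathcal{S}_{\alpha,1}(t-s,x)|\,dx\lesssim(t-s)^{\alpha-1}$ from \eqref{L1 estimate} and the linear growth $|g(s,x,z)|\lesssim\theta_1(x)+|z|$ yield $\big\|\int_0^t\mathcal{S}_{\alpha,1}(t-s)\star g(s,\cdot,w)\,ds\big\|_{L_p}\lesssim\int_0^t(t-s)^{\alpha-1}\big(\|\theta_1\|_{L_p}+\|w(s)\|_{L_p}\big)\,ds$, whose kernel is integrable because $\alpha-1>-1$.

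The decisive term is the compensated Poisson integral. Applying Lemma \ref{gujibudengshi} pointwise in $x$, integrating in $x$, and using Fubini, I would bound its $p$-th moment by
\begin{align*}
	&\int_{\mathbb{R}^d}\mathbb{E}\Big|\int_0^t\!\!\int_{\mathbb{R}^d}\!\!\int_A\mathcal{S}_{\alpha,\sigma_2}(t-s,x-y)\tilde{f}(s,y,\xi,w)\,\tilde{\Pi}(ds,dy,d\xi)\Big|^p\,dx\\
	&\quad\lesssim\int_0^t\int_{\mathbb{R}^d}\Big(\int_{\mathbb{R}^d}|\mathcal{S}_{\alpha,\sigma_2}(t-s,x-y)|^p\,dx\Big)\int_A\mathbb{E}|\tilde{f}(s,y,\xi,w)|^p\,\mu(d\xi)\,dy\,ds.
\end{align*}
Lemma \ref{keyestimateformildsolution} gives $\int_{\mathbb{R}^d}|\mathcal{S}_{\alpha,\sigma_2}(t-s,x)|^p\,dx\lesssim(t-s)^{(\alpha-\sigma_2)p}\big(\phi^{-1}((t-s)^{-\alpha})\big)^{\frac d2(p-1)}$, while the growth hypothesis on $\tilde{f}=\eta h$ bounds the inner integral by $|\theta_2(y)|^p+\mathbb{E}|w(s,y)|^p$. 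Hence the jump term is controlled by $\int_0^t K(t-s)\big(\|\theta_2\|_{L_p}^p+\mathbb{E}\|w(s)\|_{L_p}^p\big)\,ds$ with $K(r)=r^{(\alpha-\sigma_2)p}\big(\phi^{-1}(r^{-\alpha})\big)^{\frac d2(p-1)}$.

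Here the lower scaling condition \eqref{lower scailing condition} enters in the form $\phi^{-1}(\lambda)\lesssim\lambda^{1/\kappa_{0}}$, which gives $K(r)\lesssim r^{\theta-1}$ with $\theta-1=(\alpha-\sigma_2)p-\frac{\alpha d}{2\kappa_{0}}(p-1)$; the hypothesis $(\alpha-\sigma_2)p+1>\frac{\alpha d}{2\kappa_{0}}(p-1)$ is precisely $\theta>0$, so $K$ is locally integrable. With these bounds I would localize to handle the super-linear (only locally Lipschitz) modulus $\theta_3+|z_1|^{p-1}+|z_2|^{p-1}$ of $g$: for $R>0$ set $\upsilon_R=\inf\{t\le T:\|w(t,\cdot)\|_{L_p}\ge R\}$ and replace $g,\eta h$ by their $\upsilon_R$-stopped, truncated versions, for which the contraction estimate $\sup_{t\le\tau}\mathbb{E}\|(\mathcal{T}w-\mathcal{T}v)(t)\|_{L_p}^p\lesssim\int_0^\tau\big((t-s)^{\alpha-1}+K(t-s)\big)\mathbb{E}\|(w-v)(s)\|_{L_p}^p\,ds$ follows from the global Lipschitz property of $\tilde{f}$ and the truncated control of $g$. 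Iterating this Volterra inequality produces the convergent factorial series $\sum_k(C\Gamma(\theta))^k/\Gamma(k\theta+1)$, exactly as in the proof of Theorem \ref{main theorem 1}, so the stopped map has a unique fixed point $w_R\in\mathcal{B}_T$. The stopped solutions are consistent on $[0,\upsilon_R\wedge\upsilon_{R'}]$; setting $\upsilon=\lim_{R\to\infty}\upsilon_R$ and $w=w_R$ on $[0,\upsilon_R]$ yields the local mild solution, with predictable c\`{a}dl\`{a}g modification coming from the c\`{a}dl\`{a}g property of the stochastic integral (Remark \ref{some remark about yang}) and $\mathbb{E}\|w(t\wedge\upsilon)\|_{L_p}^p<\infty$ from the a priori bound. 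Uniqueness up to the common stopping time follows from the same contraction estimate and the generalized Gronwall inequality.

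I expect the principal obstacle to be the sharp time-integrability of the kernel $K$: one must combine the pointwise bounds of Lemma \ref{keyestimateformildsolution} with the scaling \eqref{lower scailing condition} to identify the singularity exponent as exactly $(\alpha-\sigma_2)p-\frac{\alpha d}{2\kappa_{0}}(p-1)$ and to see that the hypothesis keeps it strictly above $-1$. A secondary technical point is the super-linear Lipschitz modulus of $g$, whose control requires the stopping-time truncation together with H\"{o}lder's inequality (using $\tfrac{p-1}{p}+\tfrac1p=1$) to bound $\||w|^{p-1}(w-v)\|$ by $\|w\|_{L_p}^{p-1}\|w-v\|_{L_p}$; verifying the predictability needed to invoke Lemma \ref{gujibudengshi} is routine but must be checked.
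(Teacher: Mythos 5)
Your proposal follows the paper's own route in all essentials: the same Banach space with norm $\sup_{t\le T}\mathbb{E}\big[\|w(t)\|_{L_p}^p\big]$, the same three-term splitting of the integral operator, the same two key ingredients (Lemma \ref{gujibudengshi} for the $p$-th moment of the compensated Poisson integral, and Lemma \ref{keyestimateformildsolution} combined with the scaling condition \eqref{lower scailing condition} to identify the kernel singularity $(\alpha-\sigma_2)p-\frac{\alpha d}{2\kappa_0}(p-1)>-1$), the same ball-truncation localization, and the same patching and uniqueness argument via a limit of stopping times. The only cosmetic difference is how the fixed point is closed: you iterate the Volterra inequality and sum the factorial series, while the paper works in the equivalent exponentially weighted norm $e^{-\vartheta t}\mathbb{E}\|\cdot\|_{L_p}^p$ and obtains a direct contraction for large $\vartheta$; these are interchangeable.

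Two steps need repair. First, there is a circularity in your localization as written: you define $\upsilon_R=\inf\{t\le T:\|w(t,\cdot)\|_{L_p}\ge R\}$ from a solution $w$ that does not yet exist, and then ``stop'' the coefficients by $\upsilon_R$ inside the fixed-point map. The correct order (the paper's) is to truncate the unknown inside the nonlinearities via the $L_p$-ball retraction $\lambda_K$ (so that $\|\lambda_K u\|_{L_p}\le K$ and $\lambda_K$ is $1$-Lipschitz), solve the truncated equation globally on $[0,T]$ by the fixed-point argument to get $w_K$, and only afterwards define $\upsilon_K=\inf\{t:\|w_K(t)\|_{L_p}>K\}$ and patch. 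Second, and more substantively, your justification of the predictable modification would fail: an adapted c\`{a}dl\`{a}g process is in general optional but not predictable, so the c\`{a}dl\`{a}g property of the stochastic integral does not by itself yield a predictable modification. The paper instead proves stochastic continuity of the Poisson-integral term (the estimates $I_1$, $I_2$ in \eqref{control estimate}, using again Lemma \ref{keyestimateformildsolution} and dominated convergence) and then invokes \cite[Proposition 3.21]{Peszat}, which gives a predictable modification for stochastically continuous adapted processes; this is also what legitimizes inserting the solution into the stochastic integrals. Both repairs are routine and exactly match the paper's treatment, so the overall plan stands.
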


\begin{proof}
	For fixed $T>0$, $p\in[1,2]$, we introduce the Banach space $B_{T,p}$ consisting of $\mathcal{F}_{t}$-adapted stochastic functions $w(t,x)$ satisfying
	\[
	\|w\|_{B_{T,p}}=\sup_{t\in[0,T]}\mathbb{E}[\|w(t)\|_{L_{p}}^{p}]^{\frac{1}{p}}<\infty.
	\]
	For any fixed $K\in\mathbb{N}_{+}$, we define the mapping $\lambda_{K}:L_{p}(\mathbb{R}^{d})\rightarrow L_{p}(\mathbb{R}^{d})$ by
	\[
	\lambda_{K}w_{1}(x)=
	\begin{cases}
		w_{1}(x), & \|w_{1}\|_{p}\leq K,\\
		\frac{Kw_{1}(x)}{\|w_{1}\|_{p}}, & \|w_{1}\|_{p}> K.
	\end{cases}
	\]
	It is easy to see that $\|\lambda_{K}w_{1}\|_{p}\leq K$, and $\|\lambda_{K}w_{1}-\lambda_{K}w_{2}\|_{p}\leq \|w_{1}-w_{2}\|_{p}$. We define the following operator associated with the stochastically truncated function $\lambda_{K}w(t,x)$:
	\begin{align*}
		\mathcal{T}w(t,x)&=\mathcal{S}_{\alpha,\alpha}(t)\star w_{0}(x)+\int_{0}^{t}\mathcal{S}_{\alpha,1}(t-s)\star g(s,x,\lambda_{K}w(s,x))\,ds\\
		&\quad+\int_{0}^{t}\int_{\mathbb{R}^{d}}\int_{A}
		\mathcal{S}_{\alpha,\sigma_{2}}(t-s,x-y)\tilde{f}(s,y,\xi,\lambda_{K}w(s,y))M(ds,dy,d\xi)\\
		&\triangleq \mathcal{T}_{1}w(t,x)+\mathcal{T}_{2}w(t,x)+\mathcal{T}_{3}w(t,x).
	\end{align*}
	
	First, we verify that the operator $\mathcal{T}$ maps $B_{T,p}$ into $B_{T,p}$.
	
	Using Lemma \ref{Some proposition of S} and Young's inequality, we derive
	\[
	\|\mathcal{T}_{1}w\|_{L_{p}}\lesssim\|\mathcal{S}_{\alpha,\alpha}(t)\star w_{0}\|_{p}\lesssim\|w_{0}\|_{p},
	\]
	and
	\begin{align*}
		\|\mathcal{T}_{2}w\|_{L_{p}}&\lesssim\left\|\int_{0}^{t}\int_{\mathbb{R}^{d}}\mathcal{S}_{\alpha,1}(t-s,x-y) g(s,y,\lambda_{K}w(s,y))\,ds\,dy\right\|_{L_{p}}\\
		&\lesssim\int_{0}^{t}(t-s)^{\alpha-1}\,ds(\|\theta_{1}\|_{p}+K)\lesssim T^{\alpha}(\|\theta_{1}\|_{p}+K)<\infty.
	\end{align*}
	
	Using Lemma \ref{gujibudengshi} and \eqref{lower scailing condition}, we derive
	\begin{align*}
		\mathbb{E}[\|\mathcal{T}_{3}w\|_{L_{p}}^{p}]&\lesssim\int_{\mathbb{R}^{d}}\mathbb{E}\left|\int_{0}^{t}\int_{\mathbb{R}^{d}}\int_{A}
		\mathcal{S}_{\alpha,\sigma_{2}}(t-s,x-y)\tilde{f}(s,y,\xi,\lambda_{K}w(s,y))M(ds,dy,d\xi)\right|^{p}\,dx\\
		&\lesssim\mathbb{E}\int_{\mathbb{R}^{d}}\int_{0}^{t}\int_{\mathbb{R}^{d}}\int_{A}
		\left|\mathcal{S}_{\alpha,\sigma_{2}}(t-s,x-y)\tilde{f}(s,y,\xi,\lambda_{K}w(s,y))\right|^{p}\,ds\,dy\,\mu(d\xi)\,dx\\
		&\lesssim\int_{\mathbb{R}^{d}}\int_{0}^{t}\int_{\mathbb{R}^{d}}|\mathcal{S}_{\alpha,\sigma_{2}}(t-s,x-y)|^{p}
		(|\theta_{2}(y)|^{p}+|\lambda_{K}w(s,y)|^{p})\,ds\,dy\,dx\\
		&\lesssim\int_{0}^{t}\int_{\mathbb{R}^{d}}|\mathcal{S}_{\alpha,\sigma_{2}}(t-s,x)|^{p}\,dx\,ds
		(\|\theta_{2}\|^{p}_{L_{p}}+K^{p})\\
		&\lesssim\int_{0}^{t}(t-s)^{(\alpha-\sigma_{2})p}(\phi^{-1}((t-s)^{-\alpha}))^{\frac{d(p-1)}{2}}\,ds
		(\|\theta_{2}\|^{p}_{L_{p}}+K^{p})\\
		&\lesssim(\phi^{-1}(T^{-\alpha})T^{\frac{\alpha}{\kappa_{0}}})^{\frac{(p-1)d}{2}}
		\int_{0}^{t}(t-s)^{(\alpha-\sigma_{2})p-\frac{\alpha d}{2\kappa_{0}}(p-1)}\,ds(\|\theta_{2}\|^{p}_{L_{p}}+K^{p})\\
		&\lesssim_{T}(\|\theta_{2}\|^{p}_{L_{p}}+K^{p})<\infty.
	\end{align*}
	
	Combining the estimates for $\mathcal{T}_{1}w$, $\mathcal{T}_{2}w$, and $\mathcal{T}_{3}w$, we conclude that the operator $\mathcal{T}$ maps $B_{T,p}$ into $B_{T,p}$.
	
	Next, for $\vartheta>0$, we introduce the Banach space $B_{\vartheta,p}$ consisting of $\mathcal{F}_{t}$-adapted stochastic functions $w(t,x)$ satisfying
	\[
	\|w\|^{p}_{B_{\vartheta,p}}=\sup_{t\in[0,T]}e^{-\vartheta t}\mathbb{E}[\|w(t)\|_{L_{p}}^{p}]<\infty.
	\]
	It is easy to see that the norm $\|w\|_{B_{\vartheta,p}}$ is equivalent to $\|w\|_{B_{T,p}}$ for fixed $\vartheta>0$. We verify that the operator $\mathcal{T}$ is a contraction on $B_{\vartheta,p}$ for sufficiently large $\vartheta>0$. Following a similar procedure as above, we can verify that $\mathcal{T}$ maps $B_{\vartheta,p}$ into itself. Moreover, for any $w_{1},w_{2}\in B_{\vartheta,p}$, by Jensen's inequality, we derive
	\begin{align*}
		&\sup_{t\in[0,T]}e^{-\vartheta t}\mathbb{E}\|\mathcal{T}_{2}w_{1}-\mathcal{T}_{2}w_{2}\|^{p}_{L_{p}}\\
		&\lesssim \sup_{t\in[0,T]}e^{-\vartheta t}\mathbb{E}\left[\left\|\int_{0}^{t}\int_{\mathbb{R}^{d}}
		\mathcal{S}_{\alpha,1}(t-s,x-y)(g(s,y,\lambda_{K}w_{1}(s,y))-g(s,y,\lambda_{K}w_{2}(s,y)))\,ds\,dy\right\|^{p}_{L_{p}}\right]\\
		&\lesssim\sup_{t\in[0,T]}e^{-\vartheta t}\mathbb{E}\left[\int_{0}^{t}(t-s)^{\alpha-1}\|(\theta_{3}(y)+|\lambda_{K}w_{1}|^{p-1}+|\lambda_{K}w_{2}|^{p-1})
		|\lambda_{K}w_{1}-\lambda_{K}w_{2}|\|_{L_{p}}\,ds\right]^{p}\\
		&\lesssim\sup_{t\in[0,T]}e^{-\vartheta t}\mathbb{E}\left[\int_{0}^{t}(t-s)^{\alpha-1}(\|\lambda_{K}w_{1}-\lambda_{K}w_{2}\|_{L_{p}}
		(\|\theta_{3}\|_{L_{p}}+\|\lambda_{K}w_{1}\|^{p-1}_{L_{p}}+\|\lambda_{K}w_{2}\|^{p-1}_{L_{p}}))\,ds\right]^{p}\\
		&\lesssim\sup_{t\in[0,T]}\int_{0}^{t}e^{-\vartheta(t-s)}(t-s)^{(\alpha-1)p}e^{-\vartheta s}\mathbb{E}\|\lambda_{K}w_{1}-\lambda_{K}w_{2}\|^{p}_{L_{p}}\,ds
		(\|\theta_{3}\|_{L_{p}}+2K^{p-1})^{p}\\
		&\leq\frac{1}{2}\sup_{t\in[0,T]}e^{-\vartheta t}\mathbb{E}\|w_{1}-w_{2}\|^{p}_{L_{p}},\quad \text{for sufficiently large }\vartheta>0,
	\end{align*}
	and
	\begin{align*}
		&\sup_{t\in[0,T]}e^{-\vartheta t}\mathbb{E}\|\mathcal{T}_{3}w_{1}-\mathcal{T}_{3}w_{2}\|^{p}_{L_{p}}\\
		&\lesssim\sup_{t\in[0,T]}e^{-\vartheta t}\int_{\mathbb{R}^{d}}\mathbb{E}\left|\int_{0}^{t}\int_{\mathbb{R}^{d}}\int_{A}
		\mathcal{S}_{\alpha,\sigma_{2}}(t-s,x-y)(\tilde{f}(s,y,\xi,\lambda_{K}w_{1}(s,y))\right.\\
		&\qquad\qquad\left.-\tilde{f}(s,y,\xi,\lambda_{K}w_{2}(s,y)))M(ds,dy,d\xi)\right|^{p}\,dx\\
		&\lesssim\sup_{t\in[0,T]}e^{-\vartheta t}\mathbb{E}\int_{\mathbb{R}^{d}}\int_{0}^{t}\int_{\mathbb{R}^{d}}\int_{A}
		\left|\mathcal{S}_{\alpha,\sigma_{2}}(t-s,x-y)(\tilde{f}(s,y,\xi,\lambda_{K}w_{1}(s,y))\right.\\
		&\qquad\qquad\left.-\tilde{f}(s,y,\xi,\lambda_{K}w_{2}(s,y)))\right|^{p}\,ds\,dy\,\mu(d\xi)\,dx\\
		&\lesssim(\phi^{-1}(T^{-\alpha})T^{\frac{\alpha}{\kappa_{0}}})^{\frac{(p-1)d}{2}}\sup_{t\in[0,T]}
		e^{-\vartheta t}\int_{0}^{t}(t-s)^{(\alpha-\sigma_{2})p-\frac{\alpha d}{2\kappa_{0}}(p-1)}
		\|\lambda_{K}w_{1}-\lambda_{K}w_{2}\|^{p}_{L_{p}}\,ds\\
		&\leq\frac{1}{2}\sup_{t\in[0,T]}e^{-\vartheta t}\mathbb{E}\|w_{1}-w_{2}\|^{p}_{L_{p}},\quad \text{for sufficiently large }\vartheta>0.
	\end{align*}
	
	In summary, we obtain that the operator $\mathcal{T}$ is a contraction on $B_{\vartheta,p}$ for sufficiently large $\vartheta>0$. By the Banach fixed point theorem, for any fixed $\vartheta$, the operator $\mathcal{T}$ has a unique fixed point $w_{K}$ in $B_{\vartheta,p}$, which is the unique solution to the equation
	\begin{align}\label{local mild solution}
		w(t,x)&=\mathcal{S}_{\alpha,\alpha}(t)\star w_{0}(x)+\int_{0}^{t}\mathcal{S}_{\alpha,1}(t-s)\star g(s,x,\lambda_{K}w(s,x))\,ds\notag\\
		&\quad+\int_{0}^{t}\int_{\mathbb{R}^{d}}\int_{A}
		\mathcal{S}_{\alpha,\sigma_{2}}(t-s,x-y)\tilde{f}(s,y,\xi,\lambda_{K}w(s,y))M(ds,dy,d\xi).
	\end{align}
	
	Next, we construct an $\mathcal{F}_{t}$-stopping time $\upsilon_{K}$. Let
	\[
	\upsilon_{K}:=\inf\{t\in[0,T]:\|w_{K}(t)\|_{L_{p}}>K\}.
	\]
	By the monotone convergence theorem, $\upsilon=\lim_{K\rightarrow\infty}\upsilon_{K}$ exists. Noting the uniqueness of the local mild solution of Equation \eqref{TSFSPDE}, for any $N>K$, we have
	\[
	w_{N}(t,x,\cdot)=w_{K}(t,x,\cdot)\quad \text{for a.e. }t\in [0,T],x\in\mathbb{R}^{d}.
	\]
	Hence, for any $K\in\mathbb{N}_{+}$, we define
	\[
	w(t,x,\omega)=w_{K}(t,x,\omega) \quad \text{for }(t,x,\omega)\in[0,\upsilon_{K})\times\mathbb{R}^{d}\times\Omega.
	\]
	Clearly, through this definition, we obtain a local mild solution of Equation \eqref{TSFSPDE} with respect to the $\mathcal{F}_{t}$-stopping time $\upsilon$. Moreover, for any two local mild solutions $w_{1},w_{2}$ satisfying \eqref{mild solution formula}, by the definition of local mild solution, for any $K\in\mathbb{N}_{+}$, $w_{1}(t)=w_{2}(t)$ for $t\in [0,\upsilon_{K})$. Letting $K\rightarrow\infty$, we obtain that the mild solution of Equation \eqref{TSFSPDE} is unique. The condition $\mathbb{E}[\|w(t\wedge\upsilon,\cdot)\|_{p}^{p}]<\infty$ is obvious.
	
	Finally, we verify that the mild solution $w$ has a predictable modification. From \cite[Proposition 3.21]{Peszat}, any stochastically continuous $\mathcal{F}_{t}$-adapted process has a predictable modification. Thus, it suffices to verify
	\begin{align}\label{continuous estimate}
		&\lim_{t_{2}\rightarrow t_{1}}\int_{\mathbb{R}^{d}}\mathbb{E}\left[\left|\int_{0}^{t_{2}}\int_{\mathbb{R}^{d}}\int_{A}
		\mathcal{S}_{\alpha,\sigma_{2}}(t_{2}-s,x-y)
		\tilde{f}(s,y,\xi,w(s,y))M(ds,dy,d\xi)\right.\right.\notag\\
		&\qquad\qquad\left.\left.-\int_{0}^{t_{1}}\int_{\mathbb{R}^{d}}\int_{A}
		\mathcal{S}_{\alpha,\sigma_{2}}(t_{1}-s,x-y)
		\tilde{f}(s,y,\xi,w(s,y))M(ds,dy,d\xi)\right|^{p}\right]\,dx=0.
	\end{align}
	
	Note that the left-hand side of \eqref{continuous estimate} is controlled by
	\begin{align}\label{control estimate}
		&\mathbb{E}\int_{\mathbb{R}^{d}}\int_{0}^{t_{1}}\int_{\mathbb{R}^{d}}|\mathcal{S}_{\alpha,\sigma_{2}}(t_{2}-s)
		-\mathcal{S}_{\alpha,\sigma_{2}}(t_{1}-s)|^{p}(\theta_{4}(y)+|w(s,y)|^{p})\,ds\,dy\,dx\notag\\
		&\quad+\mathbb{E}\int_{\mathbb{R}^{d}}\int_{t_{1}}^{t_{2}}\int_{\mathbb{R}^{d}}|\mathcal{S}_{\alpha,\sigma_{2}}(t_{2}-s)
		|^{p}(\theta_{4}(y)+|w(s,y)|^{p})\,ds\,dy\,dx\triangleq I_{1}+I_{2}.
	\end{align}
	
	For $I_{2}$, we derive
	\begin{align*}
		I_{2}&\lesssim(\phi^{-1}(T^{-\alpha})T^{\frac{\alpha}{\kappa_{0}}})^{\frac{(p-1)d}{2}}
		\int_{t_{1}}^{t_{2}}(t_{2}-s)^{(\alpha-\sigma_{2})p-\frac{\alpha d}{2\kappa_{0}}(p-1)}
		(\|\theta_{4}\|_{L_{1}}+\sup_{s\in[0,T]}\mathbb{E}\|w(s)\|_{L_{p}}^{p})\,ds\\
		&\rightarrow 0\quad \text{as }t_{2}\rightarrow t_{1}.
	\end{align*}
	
	For $I_{1}$, we derive
	\begin{align*}
		I_{1}&\lesssim(\phi^{-1}(T^{-\alpha})T^{\frac{\alpha}{\kappa_{0}}})^{\frac{(p-1)d}{2}}
		\int_{0}^{t_{1}}(t_{1}-s)^{(\alpha-\sigma_{2})p-\frac{\alpha d}{2\kappa_{0}}(p-1)}(\|\theta_{4}\|_{L_{1}}+\sup_{s\in[0,T]}\mathbb{E}\|w(s)\|_{L_{p}}^{p})\,ds<\infty.
	\end{align*}
	
	Therefore, by the dominated convergence theorem, we conclude that \eqref{continuous estimate} holds as $t_{2}\rightarrow t_{1}$.
\end{proof}
\begin{remark}
	In particular, for general L\'{e}vy time-space white noise $\mathcal{Z}_{t,x}$ and $p=2$, under the assumptions of Theorem \rm\ref{local mild solution theorem}, and if there exist $\theta_{4},\theta_{5}\in L_{2}(\mathbb{R}^{d})$ satisfying
	\[
	|h(t,x,z)|\lesssim(\theta_{4}(x)+|z|),\quad |h(t,x,z_{1})-h(t,x,z_{2})|\lesssim(\theta_{5}(x)+|z_{1}|+|z_{2}|)|z_{1}-z_{2}|,
	\]
	and if the $\mathcal{F}_{0}$-adapted process $w_{0}$ satisfies $\mathbb{E}[\|w_{0}\|_{L_{2}}^{2}]<\infty$, then NLSPDE \eqref{TSFSPDE1} admits a unique local mild solution $w$ on $[0,T]\times\mathbb{R}^{d}$ which has a predictable modification, and satisfies
	\[
	\mathbb{E}[\|w(t\wedge\upsilon,\cdot)\|_{2}^{2}]<\infty.
	\]
\end{remark}

Indeed, following the same proof procedure as in Theorem \ref{local mild solution theorem}, we define the mapping $\mathcal{T}w=\sum_{i=1}^{4}\mathcal{T}_{i}w$, where $\mathcal{T}_{1}w$, $\mathcal{T}_{2}w$, $\mathcal{T}_{3}w$ are defined as in Theorem \ref{local mild solution theorem} and
\[
\mathcal{T}_{4}w=\int_{0}^{t}\int_{\mathbb{R}^{d}}\mathcal{S}_{\alpha,\sigma_{1}}(t-s,z-y)h(s,y,w(s,y))W(dy,ds).
\]
Noting that the Gaussian white noise is isometric from $L_{2}(\mathbb{R}^{d})$ to the Gaussian space, the proof follows similarly to that of Theorem \ref{local mild solution theorem}, we omit it.

	\noindent{\bf Declaration of competing interest}\\
	The authors declare that they have no competing interests.\\
	\noindent{\bf Data availability}\\
	No data was used for the research described in the article.\\
	\noindent{\bf Acknowledgements}\\
	This work was supported by National Natural Science Foundation of China (12471172), Fundo para o Desenvolvimento das Ci\^{e}ncias e da Tecnologia of Macau (No. 0092/2022/A) and Hunan Province Doctoral Research Project CX20230633.

\end{document}